\newtheorem{theorem}{Theorem}[section]
\newtheorem{definition}[theorem]{Definition}
\newtheorem{proposition}[theorem]{Proposition}
\newtheorem{lemma}[theorem]{Lemma}
\newtheorem{remark}[theorem]{Remark}
\newtheorem{corollary}[theorem]{Corollary}
\newcommand{\abs}[1]{\left\lvert#1\right\rvert}
\newcommand{\norm}[1]{\lVert#1\rVert}
\newcommand{\red}[1]{\textcolor{red}{#1}}
\numberwithin{equation}{section}
\title[The double phase problems]{The double phase problems on lattice graphs}
\author[Z.T. He]{Zhentao He}
\address[Z.T. He]{\newline\indent
	School of Mathematics
	\newline\indent
	East China University of Science and Technology
	\newline\indent
	Shanghai 200237, PR China }
\email{\href{mailto:hezhentao2001@outlook.com}{hezhentao2001@outlook.com}}
\author[C. Ji]{Chao Ji}
\address[C. Ji]{\newline\indent
	School of Mathematics
	\newline\indent
	East China University of Science and Technology
	\newline\indent
	Shanghai 200237, PR China }
\email{\href{mailto:jichao@ecust.edu.cn}{jichao@ecust.edu.cn}}
\subjclass[2010]{ 35J20, 35J60, 35B33.}
\date{\today}
\keywords{Double phase problems, Lattice graphs, Musielak-Orlicz spaces, Constrained minimization.}
\begin{document}

\begin{abstract}
  In this paper, we first develop the theory of Musielak-Orlicz spaces on locally finite graphs, including completeness, reflexivity, separability, and so on. Then, we give some elementary properties of double phase operators on locally finite graphs.
   Finally, as applications of previous theory, we prove some existence results of solutions to double phase problems on lattice graphs.
\end{abstract}

\maketitle

\begin{center}
	\begin{minipage}{11cm}
		\tableofcontents
	\end{minipage}
\end{center}

\section{Introduction and main results}
\label{secin}

In the past decades, a considerable amount of literature has been devoted to double phase problems. These problems are motivated by various physical phenomena. Firstly, the double phase operator has been utilized to describe steady-state solutions of reaction-diffusion problems in various fields such as biophysics, plasma physics, and chemical reaction analysis. The prototype equation for these models can be expressed as:
$$
u_t=\Delta_p^a u(z)+\Delta_q u+g(x, u) \quad \text{ in } \Omega_T,
$$
where $\Omega_T :=  \Omega \times (0, T)$ denotes the space-time cylinder, and $\Omega$ denotes a bounded domain in $\mathbb{R}^n$ with $n \geqslant 2$ and $t\in (0, T)$ with $T>0$. In this context, the function $u$ typically represents a concentration, the term $\Delta_p^a u(z)+\Delta_q u$ accounts for diffusion with coefficients determined by $a(z)|\nabla u|^{p-2}+|\nabla u|^{q-2}$, and $g(x, u)$ describes the reaction term associated with source and loss processes. For further details, please refer to Cherfils \& Il'yasov  \cite{CIi}  and Singer  \cite{S}.  Secondly, such operators offer a valuable framework for explaining the behavior of highly anisotropic materials. These materials exhibit varying hardening properties, which are influenced by the exponent governing the propagation of the gradient variable. These variations occur depending on the point of interest, where the modulating coefficient $a(z)$ influences the geometry of a composite composed of two different materials.  For instance, this structure is used to describe a composite material having on $\{z \in \Omega: \alpha(z)=0\}$ an energy density with $q$-growth, but on $\{z \in \Omega: \alpha(z)>0\}$ the energy density has $p$-growth.

This operator is related to the double phase energy functional, which is defined by
$$
u \rightarrow \int_{\Omega}\Big(a(z)\vert\nabla u\vert^p+\vert\nabla u\vert^q\Big)dz.
$$
It is noteworthy that if the weight function $a(\cdot)$ is not bounded away from zero, i.e.,  $\underset{\Omega}{\text{ess}\inf}\,a\geq 0$, the density function of the above integral functional, denoted as the integrand $\eta(z, t)=a(z) t^p+t^q$, exhibits unbalanced growth, which can be characterized by:
$$
t^q\leq \eta(z, t)\leq c_{0}(1+t^p),\,\,\text {for a.e.}\,\, z\in\Omega, \,\,\text {all}\,\, t\geq 0,\,\,\text {some}\,\, c_{0}>0.
$$
Such integral functionals were first considered by Marcellini \cite{Ma, Ma1} and Zhikov \cite{Zh1, Zh2} in the context of problems in the calculus of variations (including the Lavrentiev gap phenomenon) and of nonlinear elasticity theory.  For problems with unbalanced growth, only local regularity results exist (please see the survey papers \cite{Ma2} due to Marcellini and \cite{MR} due to Mingione and R\v{a}dulescu),  and there is no global regularity theory (i.e., regularity up to the boundary). This limitation restricts the use of many powerful technical tools available for problems with balanced growth and the nonlinearity $f$ satisfies some subcritical growth conditions. For further results about existence, multiplicity, regularity and qualitative properties of the solutions to double phase problems, please see \cite{He,Amb,CIi,liu2018existence,Po,Col1,Col2,LiuP,Pap,Fang,Fil} and the references therein.


In recent years, the nonlinear Schr\"{o}dinger equations on graphs has received considerable attention (see \cite{hua2023existence,Hua1,Hua2,Gri0,Gri1,Chang,Zhang,Shao,Han,Shao1,hua2021existence,Hua,Stefanov,Weinstein} and the references therein).
In particular, in the monograph \cite{Gri0}, Grigor'yan introduced discrete Laplace operator on finite and infinite graphs. In \cite{Gri1}, Grigor'yan, Lin and Yang considered the following nonlinear Schr\"{o}dinger equation
\begin{equation}\label{eqsch}
    -\Delta u + h(x)u = f (x, u)
\end{equation}
on locally finite graph $G=(V,E)$, where $f$ satisfies some growth conditions and the potential function $h$ satisfies $1/h\in\ell^1(V)$ or $h(x) \to\infty$ as $d(x, x_0) \to \infty$ for some $x_0 \in V$. Under the assumptions on $h$, they proved Sobolev compact embeddings and obtained the existence of positive solution to equation \eqref{eqsch} via variational methods. In \cite{hua2023existence}, applying the Nehari method, Hua and Xu studied the existence of ground states of equation \eqref{eqsch} on the lattice graphs, where $f$ satisfies some growth conditions and the potential function $h$ is periodic or bounded, and they extended their results to quasi-transitive graphs.

The following Sobolev inequality in lattice graph $\mathbb{Z}^N$ is well-known
\begin{equation}\label{eqsobolev}
    \|u\|_{r} \leqslant C_{r}\|\nabla u\|_p, \quad \forall u \in D^{1,p}\left(\mathbb{Z}^{N}\right),
\end{equation}
where $N \geqslant 2$, $1 \leqslant p < N$, $r \geqslant p^* := \frac{Np}{N-p}$, see \cite[Theorem 3.6]{hua2015time} for the detailed proof. In \cite{hua2021existence}, by minimizing $\|\nabla u\|_p^p$ on $\left\{ u\in  D^{1,p}(\mathbb{Z}^N) : \norm{u}_r = 1\right\}$, Hua and Li proved the existence of extremal function for \red{the} Sobolev inequality \eqref{eqsobolev}, and then they obtained a positive solution to the equation
\[
    -\Delta_p u = u^{r-1}\quad \text{ in } \mathbb{Z}^N,
\]
where $N \geqslant 3$, $1 \leqslant p < N$, $r > p^*$.

However, as far as we know, there are no any
results on double phase problems on graphs. In this paper,  we will make some significant attempts and establish some fundamental theories and results in this direction. More precisely, we are studying the double phase problems on the lattice graphs for the first time.
To investigate these problems, we first develop the theory of Musielak-Orlicz spaces on locally finite graphs, including completeness,
  reflexivity, separability, and so on. Then, we give some elementary properties of double phase operators on locally finite graphs.
  We think that these have independent interest and can be applied to the study of related problems.

In the continuous case $\mathbb{R}^N$, the following double phase problem has been extensively studied
\begin{equation}\label{eqrn}
\begin{cases}
-\mathrm{div}(\abs{\nabla u}^{p-2}\nabla u+a(x)\abs{\nabla u}^{q-2}\nabla u) = g(u), & \text { in } \mathbb{R}^N, \\
u(x) \to 0 & \text{ as } \abs{x} \to +\infty,
\end{cases}
\end{equation}
where $N \geqslant 3$, $1 < p < N$ and $p < q$. The assumption that $a(x)$ is a positive constant is crucial to obtain the compactness of Palais-Smale sequences, see \cite{Po,He}. When $a(x)$ is not a positive constant, to the best of our knowledge, there is nothing about the existence of solutions of \eqref{eqrn} in the literature. The present paper
is the first contribution to the double phase problem on lattice graphs under different assumptions on $a$. Moreover, our work space is continuously embedded in $\ell^r(\mathbb{Z}^N)$ for all $r \geqslant p^*$, while it is continuously embedded in  $L^r(\mathbb{R}^N)$ for all $r\in [p^*,q^*]$ in continuous setting.

Now we recall some basic setting on graphs from \cite{hua2021existence, Zhang}. Let $G = (V, E)$ be a connected locally finite graph,
where $V$ denotes the vertex set and $E \subset V\times V$ denotes the edge set. A graph is called locally finite if each vertex has finitely many neighbours and is called connected if for any two vertices $x,y \in V$, there is a path (with finite length) connecting $x$ and $y$. The graph distance $d(x,y)$ of two vertices $x, y \in V$ is defined by the minimal number of edges which connect these two vertices. Denote $C(V)$ as the set of all real functions defined on $V$.

Throughout this paper, we regard $E$ as a subset of $V \times V$. Therefore, for any $(x,y)\in E$, $(x,y)$ and $(y,x)$ are different elements in $E$ while they stand for the same edge. Let $\mu \colon V \to (0, +\infty)$ be a measure on $V$, and $w \colon V\times V\to [0, +\infty)$ be an edge weight function satisfying
 \[w(x,y) = w(y,x), \quad \text{for all }  (x,y)\in E,\]  and for any $(x,y)\in V \times V$, \[(x,y) \in E \Leftrightarrow w(x,y)>0.\]
 To simplify the notation, in the following, if $(x,y) \in E$, we also write $xy \in E$ or $y \sim x$, and define $w_{xy}= w(x,y)$.

The $N$-dimensional integer lattice graph,
denoted by $\mathbb{Z}^{N}$, is the graph consisting of the set of
vertices $V=\mathbb{Z}^{N}$ and the set of edges
\[
E=\left\{ (x,y) :x,y\in\mathbb{Z}^{N},\mathop{\sum\limits _{i=1}^{N}\abs{x_{i}-y_{i}}=1}\right\} .
\]
For any $x \in \mathbb{Z}^N$, define $\abs{x} := d(x,0)$. On lattice graphs, we always assume that $\mu \equiv 1$ and $w \equiv 1$.

In this paper, we shall study the following double phase problem on the lattice graph
\begin{equation}
  \label{eq1}
  -\mathrm{div}(\abs{\nabla u}^{p-2}\nabla u+a(x)\abs{\nabla u}^{q-2}\nabla u)=f(x,u)\quad \text{ in } \mathbb{Z}^N,
\end{equation}
where $N \geqslant 2$, $1<p<N$, $p<q<\infty$, $a \in C(\mathbb{Z}^N)$, $a(x) \geqslant 0$ for all $x \in \mathbb{Z}^{N}$,
and the double phase operator on the lattice graph is denoted by
    \begin{equation*}
  \begin{aligned}
    L(u)(x_0):
    = & -\mathrm{div}(\abs{\nabla u}^{p-2}\nabla u+a(x)\abs{\nabla u}^{q-2}\nabla u)(x_0)\\
    = & -\sum_{y\sim{x_0}}\abs{u(y)-u(x_0)}^{p-2}(u(y)-u(x_0))\\
      & -\sum_{y\sim{x_0}}\frac{a(x_0)+a(y)}{2}\abs{u(y)-u(x_0)}^{q-2}(u(y)-u(x_0))
      \end{aligned}
\end{equation*}
for all $x_0 \in \mathbb{Z}^N$ and $u \in C(\mathbb{Z}^{N})$.  It is natural to consider the integral
\begin{equation}
\label{eqintegral}
        I(u) := \frac{1}{2} \sum_{xy \in E} \left(\frac{1}{p}\abs{u(y)-u(x)}^p + \frac{a(x)}{q}\abs{u(y)-u(x)}^q\right)
\end{equation}
to be the functional corresponding to the double phase operator $L$. If $a \in \ell^\infty(\mathbb{Z}^N)$, then by Remark \ref{remarkequi} below, $I$ is well-defined and of class $C^1$ on $D^{1,p}(\mathbb{Z}^N)$. However, if $a$ is unbounded, then, for each $1 \leqslant \alpha \leqslant \infty$, \eqref{eqintegral} could be infinite for some $u \in D^{1,\alpha}(\mathbb{Z}^N)$, and thus $I$ is no longer a well-defined functional on $D^{1,\alpha}(\mathbb{Z}^N)$. Therefore, we introduce the Musielak-Orlicz space $D^{1,\mathcal{H}}(\mathbb{Z}^N)$ in Subsection \ref{ZN} to ensure \eqref{eqintegral} is well defined and of class $C^1$ on $D^{1,\mathcal{H}}(\mathbb{Z}^N)$. Therefore, we can deal with problem $\eqref{eq1}$ under different assumptions on $a$ uniformly.

Now, we present the main results of this paper.
\begin{theorem}
  \label{theorem1}
  If $f(x,t)=f(x)$, $f \in \ell^{r}(\mathbb{Z}^N)$ and $\frac{1}{r}+\frac{1}{p^*} \geqslant 1$,
   then problem \eqref{eq1} has a unique solution.
\end{theorem}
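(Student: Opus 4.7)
\medskip
\noindent\textbf{Proof proposal.} The plan is to recognize problem \eqref{eq1} with $f(x,t)=f(x)$ as the Euler--Lagrange equation of the functional
\[
  J(u):= I(u)-\sum_{x\in\mathbb{Z}^N} f(x)u(x)
\]
on $D^{1,\mathcal{H}}(\mathbb{Z}^N)$, and to apply the direct method of the calculus of variations. First I would check that $J$ is well-defined and of class $C^1$. The hypothesis $\tfrac{1}{r}+\tfrac{1}{p^*}\geq 1$ is equivalent to $r\leq (p^*)'$, and on the discrete lattice this gives $\ell^{r}(\mathbb{Z}^N)\subset \ell^{(p^*)'}(\mathbb{Z}^N)$. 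Combined with the Sobolev inequality \eqref{eqsobolev} and the continuous inclusion $D^{1,\mathcal{H}}(\mathbb{Z}^N)\hookrightarrow D^{1,p}(\mathbb{Z}^N)\hookrightarrow \ell^{p^*}(\mathbb{Z}^N)$ developed in Subsection \ref{ZN}, H\"older's inequality yields
\[
  \Bigl|\sum_{x\in\mathbb{Z}^N} f(x)u(x)\Bigr|\leq \norm{f}_{(p^*)'}\norm{u}_{p^*}\leq C\,\norm{f}_{r}\,\norm{\nabla u}_{p},
\]
so the linear term is continuous (and weakly continuous) on $D^{1,\mathcal{H}}(\mathbb{Z}^N)$. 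Since $I$ is already $C^1$ on this space by Remark \ref{remarkequi} and the theory of Subsection \ref{ZN}, so is $J$.

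Next I would obtain a minimizer by the standard variational scheme. Modular/norm inequalities for the $N$-function $\mathcal{H}$ yield a lower bound of the form $I(u)\geq c\min\{\norm{u}_{D^{1,\mathcal{H}}}^{p},\norm{u}_{D^{1,\mathcal{H}}}^{q}\}$, while the estimate above dominates the linear term by $C\norm{f}_r\norm{u}_{D^{1,\mathcal{H}}}$; since $p>1$, this forces $J(u)\to +\infty$ as $\norm{u}_{D^{1,\mathcal{H}}}\to\infty$, giving coercivity. Convexity of $t\mapsto |t|^{p}/p+a(x)|t|^{q}/q$ makes $I$ convex and continuous, hence weakly lower semicontinuous, and the linear term is weakly continuous, so $J$ is weakly lower semicontinuous. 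Using the reflexivity of $D^{1,\mathcal{H}}(\mathbb{Z}^N)$ established in Subsection \ref{ZN}, a minimizing sequence admits a weakly convergent subsequence whose limit $u_0$ attains $\inf J$. Computing $J'(u_0)\equiv 0$ with the explicit expression of $I'$, which by construction coincides with the operator $L$, recovers $Lu_0=f$ pointwise on $\mathbb{Z}^N$.

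For uniqueness I would exploit strict convexity. Since $t\mapsto |t|^{p}$ is strictly convex and $a(x)\geq 0$, the functional $I$ is strictly convex with respect to the edge increments $(u(y)-u(x))_{xy\in E}$. If $u_1,u_2$ are both minimizers, then the midpoint $\tfrac{1}{2}(u_1+u_2)$ is also a minimizer, and this is only possible when $u_1(y)-u_1(x)=u_2(y)-u_2(x)$ for every $xy\in E$; connectedness of $\mathbb{Z}^N$ then forces $u_1-u_2$ to be constant, and the embedding $D^{1,\mathcal{H}}(\mathbb{Z}^N)\hookrightarrow \ell^{p^*}(\mathbb{Z}^N)$ forces that constant to vanish. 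I expect the most delicate step to be the coercivity estimate when $a$ is unbounded, because the modular-norm comparison for $\mathcal{H}$ then couples the two exponents $p$ and $q$ in a non-homogeneous way; however, since the linear term is already controlled by $\norm{\nabla u}_{p}$ alone via the continuous inclusion into $D^{1,p}(\mathbb{Z}^N)$, the coercivity ultimately reduces to the clean bound $I(u)\geq \tfrac{1}{p}\norm{\nabla u}_{p}^{p}$ and an application of Young's inequality.
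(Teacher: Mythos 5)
Your route is correct in substance but genuinely different from the paper's. The paper does not minimize anything: it observes, just as you do, that $\tfrac1r+\tfrac1{p^*}\geqslant 1$ together with H\"older's inequality and Proposition \ref{proembed} (i), (v) makes $v\mapsto\int_{\mathbb{Z}^N}fv\,d\mu$ a bounded linear functional on $D^{1,\mathcal{H}}(\mathbb{Z}^N)$, and then invokes the earlier (unnumbered-in-text) proposition that $L=I'$ is a homeomorphism from $D^{1,\mathcal{H}}(\mathbb{Z}^N)$ onto its dual — itself proved via strict monotonicity, coercivity and the Minty--Browder theorem — so existence and uniqueness of the weak solution of \eqref{eq1} come in one stroke from $L(u)=f$. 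You instead run the direct method on $J(u)=I(u)-\langle f,u\rangle$: coercivity from the modular--norm inequalities, weak lower semicontinuity from convexity, reflexivity from Proposition \ref{prospaceD1V}, and uniqueness from strict convexity plus connectedness of $\mathbb{Z}^N$ and the embedding into $\ell^{p^*}(\mathbb{Z}^N)$. Your argument is more self-contained and purely variational (it bypasses the $(S)_+$/homeomorphism machinery), while the paper's proof is shorter precisely because that machinery has already been established and moreover yields continuity of $f\mapsto u$.

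Three small repairs. First, the theorem asserts uniqueness of \emph{solutions}, not merely of minimizers; add the one-line remark that $J$ is convex and $C^1$, so every critical point (hence every weak solution of \eqref{eq1} in $D^{1,\mathcal{H}}(\mathbb{Z}^N)$) is a global minimizer, after which your strict-convexity/connectedness argument applies — equivalently you could quote the strict monotonicity of $L$ directly. Second, your closing claim that coercivity ``reduces to the clean bound $I(u)\geqslant\tfrac1p\norm{\nabla u}_p^p$'' is not sufficient when $a$ is unbounded, since boundedness of $\norm{\nabla u}_p$ does not control $\norm{u}_{D^{1,\mathcal{H}}}$; the correct statement is the one you give first, namely $I(u)\geqslant\tfrac1q\varrho(\nabla u)\geqslant\tfrac1q\min\{\norm{u}^p,\norm{u}^q\}$ by Proposition \ref{promodular} (iii), combined with $\norm{\nabla u}_p\leqslant\norm{u}$ from the proof of Proposition \ref{proembed} (iii) to dominate the linear term. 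Third, $I\in C^1(D^{1,\mathcal{H}}(V),\mathbb{R})$ for general $a\geqslant 0$ is established in the discussion following \eqref{eqI}, not in Remark \ref{remarkequi}, which concerns only the case $a\in\ell^\infty(\mathbb{Z}^N)$.
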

\begin{remark}
  Since there is no similar results in $\mathbb{R}^N$, we compare Theorem \ref{theorem1} to \cite[Theorem 1.1]{liu2018existence} (double phase problems on bounded domain $\Omega \subset \mathbb{R}^N$). In Theorem \ref{theorem1}, the assumption on $r$ is $\frac{1}{r}+\frac{1}{p^*} \leqslant 1$, because we have the embedding $\ell^{p^*}(\mathbb{Z}^N) \hookrightarrow \ell^{\frac{r}{r-1}}(\mathbb{Z}^N)$ for all $\frac{r}{r-1} \geqslant p^*$ by Proposition \ref{proembed}(i) below. While in \cite[Theorem 1.1]{liu2018existence}, the assumption on $r$ is $\frac{1}{r}+\frac{1}{p^*} \leqslant 1$, because $L^{p^*}(\Omega) \hookrightarrow L^{\frac{r}{r-1}}(\Omega)$ for all $1 \leqslant \frac{r}{r-1} \leqslant p^*$.
\end{remark}
Next, let $r > p^*$. Define
\begin{equation*}
\begin{aligned}
J \colon D^{1,\mathcal{H}}(\mathbb{Z}^N) &\to \mathbb{R}\\
 u &\to \frac{1}{r}\int_{\mathbb{Z}^N}{\abs{u}^r}\, d\mu := \frac{1}{r}\sum_{x \in \mathbb{Z}^N}\abs{u(x)}^r.
  \end{aligned}
\end{equation*}
For given $t > 0$, let
\[
    M_{t} := \left\{ u \in D^{1,\mathcal{H}}(\mathbb{Z}^N) : J(u) = t \right\}.
\]
We consider the following constrained minimization problem
\begin{equation}
    \label{eqconstrained}
      S_{t} := \inf_{u \in M_{t}} I(u).
\end{equation}
$u_0$ is called a solution of problem \eqref{eqconstrained} if  $\in M_{t}$ and $I(u_0) = \inf_{u \in M_{t}} I(u)$.

Moreover, due to the theorem of Lagrange multipliers and the integration by parts formula \eqref{eqgreen}, if $u$ is a solution of the problem \eqref{eqconstrained}, then there exists $\lambda > 0$ such that $\langle I'(u),v \rangle = \lambda \langle J'(u),v \rangle$ for all $v \in D^{1,\mathcal{H}}(\mathbb{Z}^N)$, that is, $u$ is a nontrivial weak solution of the eigenvalue problem
\begin{equation}
    \label{eq2}
    -\mathrm{div}(\abs{\nabla u}^{p-2}\nabla u+a(x)\abs{\nabla u}^{q-2}\nabla u)= \lambda \abs{u}^{r-2}u \quad \text{ in } \mathbb{Z}^N.
\end{equation}
For any fixed $y \in \mathbb{Z}^N$, take the test function $v$ to be
    \[
    \delta_y(x):=
     \begin{cases}
       1,  &x=y,\\
       0, &x \neq y,
      \end{cases}
    \]
    then we obtain
    \begin{equation*}
      -\mathrm{div}(\abs{\nabla u}^{p-2}\nabla u+a(x)\abs{\nabla u}^{q-2}\nabla u)(y)= \lambda \abs{u}^{r-2}u(y),
    \end{equation*}
    and thus $u$ is also a pointwise solution of \eqref{eq2}.

\begin{theorem}
\label{theoremcompact}
    Let $ r > p^*$. If $a$ satisfies
    \[
        \lim_{\abs{x} \to \infty}a(x) = + \infty,
    \]
  then for each $t > 0$, problem \eqref{eqconstrained} (or \eqref{eq2}) has a positive solution $u$.
\end{theorem}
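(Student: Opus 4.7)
The plan is to apply the direct method of the calculus of variations to~\eqref{eqconstrained}. First, I select a minimizing sequence $\{u_n\} \subset M_t$ with $I(u_n) \to S_t$. Since $I(u_n)$ controls both modular pieces $\sum_{xy \in E} \abs{\nabla u_n}^p$ and $\sum_{xy \in E} a(x)\abs{\nabla u_n}^q$, the Musielak--Orlicz theory developed in the earlier sections gives boundedness of $\{u_n\}$ in $D^{1,\mathcal{H}}(\mathbb{Z}^N)$. Reflexivity of the space then yields a (not relabelled) subsequence with $u_n \rightharpoonup u$ in $D^{1,\mathcal{H}}(\mathbb{Z}^N)$, and the continuous embedding $D^{1,\mathcal{H}}(\mathbb{Z}^N) \hookrightarrow \ell^r(\mathbb{Z}^N)$ together with countability of $\mathbb{Z}^N$ yields pointwise convergence $u_n(x) \to u(x)$ for every $x \in \mathbb{Z}^N$.

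The main obstacle is to upgrade this to strong convergence $u_n \to u$ in $\ell^r(\mathbb{Z}^N)$, which forces $J(u) = t$ and thus $u \in M_t$. This is exactly where the coercivity $a(x) \to \infty$ is essential. Set $v_n := u_n - u$; then $v_n \rightharpoonup 0$, $v_n(x) \to 0$ pointwise, $\norm{v_n}_r$ is bounded (since $\norm{u_n}_r^r = rt$ and $u \in \ell^r$), and the modular $\sum_{xy \in E}(\abs{\nabla v_n}^p + a(x)\abs{\nabla v_n}^q)$ stays bounded. Assume for contradiction that $\norm{v_n}_r \not\to 0$. By pointwise decay, there are $\varepsilon > 0$ and $x_n \in \mathbb{Z}^N$ with $\abs{v_n(x_n)} \geq \varepsilon$ and $\abs{x_n} \to \infty$. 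The uniform $\ell^r$ bound prevents $\abs{v_n}$ from remaining $\geq \varepsilon/2$ on an arbitrarily large ball around $x_n$, so within a bounded-radius neighbourhood of $x_n$ there must exist an edge $xy$ with $\abs{v_n(y)-v_n(x)} \geq \delta > 0$ for some fixed $\delta$; since $a(x) \to \infty$ as $\abs{x} \to \infty$, the single-edge contribution $a(x)\abs{v_n(y)-v_n(x)}^q$ blows up, contradicting boundedness of the modular. Therefore $u_n \to u$ in $\ell^r(\mathbb{Z}^N)$ and $J(u) = t$.

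Convexity of the modular on $D^{1,\mathcal{H}}(\mathbb{Z}^N)$ makes $I$ weakly lower semicontinuous, so $I(u) \leq \liminf_n I(u_n) = S_t$, and hence $u$ minimizes~\eqref{eqconstrained}. Since $I(\abs{u}) \leq I(u)$ and $J(\abs{u}) = J(u)$, I may replace $u$ by $\abs{u}$ and assume $u \geq 0$. For strict positivity, Lagrange multipliers combined with the integration by parts formula~\eqref{eqgreen} produce $\lambda > 0$ such that $u$ solves~\eqref{eq2} pointwise; positivity of $\lambda$ follows from pairing the equation with $u$ and using $J(u) = t > 0$. If $u(x_0) = 0$ for some $x_0$, then using $u \geq 0$, equation~\eqref{eq2} at $x_0$ reads
\[
-\sum_{y \sim x_0} u(y)^{p-1} - \sum_{y \sim x_0} \frac{a(x_0)+a(y)}{2}\,u(y)^{q-1} = \lambda\,\abs{u(x_0)}^{r-2}\,u(x_0) = 0,
\]
so each non-negative summand vanishes, forcing $u(y) = 0$ for every neighbour $y$ of $x_0$. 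Iterating along edges and using the connectedness of $\mathbb{Z}^N$ yields $u \equiv 0$, contradicting $J(u) = t > 0$. Thus $u > 0$ on all of $\mathbb{Z}^N$.
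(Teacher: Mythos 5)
Your argument is correct, and it reaches the minimizer by a genuinely different route at the one point where the hypothesis $\lim_{\abs{x}\to\infty}a(x)=+\infty$ is used. The paper isolates this hypothesis in a compact-embedding lemma (Lemma \ref{lemmacompact}): writing $v_n=u_n-u$, it shows $\norm{\nabla v_n}_q\to 0$ by splitting the edge set into the finitely many edges inside a large ball (where pointwise convergence suffices) and the edges outside (where $\int_E a(x)\abs{\nabla v_n}^q\,dw\leqslant C$ and $\inf_{B_R^c}a\to\infty$ give smallness), and then concludes $v_n\to 0$ in $\ell^r$ via the gradient interpolation $\norm{\nabla v_n}_s^s\leqslant\norm{\nabla v_n}_p^p\norm{\nabla v_n}_q^{s-p}$ with $s=\frac{Nr}{N+r}$ and the Sobolev inequality \eqref{eqsobolev}. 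You instead argue by contradiction at the level of the functions themselves: a non-vanishing $\ell^r$ remainder yields points $x_n$ with $\abs{v_n(x_n)}\geqslant\varepsilon$ and $\abs{x_n}\to\infty$; the uniform $\ell^r$ bound forces a drop of order $\varepsilon$ within a ball of fixed radius around $x_n$, hence a jump of size $\delta>0$ across a single edge whose endpoints escape to infinity, where the per-edge weight $\tfrac{a(x)+a(y)}{2}\abs{\nabla v_n}^q\geqslant\min\{a(x),a(y)\}\,\delta^q$ blows up, contradicting the bounded $\mathcal{H}_E$-modular. This is a more hands-on use of the coercivity of $a$ that avoids the interpolation-of-gradients step, at the price of invoking the geometry of $\mathbb{Z}^N$ (ball counting and geodesic paths), whereas the paper's lemma delivers a reusable compact embedding of $D^{1,\mathcal{H}}(\mathbb{Z}^N)$ into $\ell^r(\mathbb{Z}^N)$ for all $r>p^*$. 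One step you should make explicit: the existence of a uniform $\varepsilon>0$ with $\norm{v_n}_\infty\geqslant\varepsilon$ does not follow from ``pointwise decay'' alone; it uses $r>p^*$ through $\norm{v_n}_r^r\leqslant\norm{v_n}_{p^*}^{p^*}\norm{v_n}_\infty^{r-p^*}$ together with the Sobolev bound on $\norm{v_n}_{p^*}$, exactly as in Lemma \ref{lemlb}. The remaining steps (weak lower semicontinuity of $I$ by convexity in place of the paper's Fatou argument, passing to $\abs{u}$, the Lagrange multiplier with $\lambda>0$, and positivity via the equation at a zero of $u$ plus connectedness) agree with the paper's proof.
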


We observe that Theorem \ref{theorem1} and \ref{theoremcompact} can be extended to more general graphs. Let $G = (V,E)$ be a uniformly locally finite graph satisfying $\inf\limits_{x \in V}\mu(x) > 0$ and $d$-isoperimetric inequality for some $d \geqslant 2$ (see \cite{hua2015time} for more details of uniformly locally finite graphs and $d$-isoperimetric inequality), then Theorem \ref{theorem1} holds for $f \in \ell^{r}(V)$, where $\frac{1}{r}+\frac{1}{p^*} \geqslant 1$ and $p^*:= \frac{dp}{d-p}$. Because in our proof, the only property of $\mathbb{Z}^N$ we used is $D^{1,\mathcal{H}}(\mathbb{Z}^N) \hookrightarrow \ell^{r}(\mathbb{Z}^N)$ for all $r \geqslant p^*$, see Proposition \ref{proembed}(v) below. If we further assume that $G$ satisfies $\inf\limits_{xy \in E}w_{xy} > 0$, then Theorem \ref{theoremcompact} holds by replacing $\abs{x}$ with $d(x,x_0)$ for some $x_0 \in V$.

\begin{theorem}
  \label{theoremmain}
  Let $ r > p^*$, $r \geqslant q$ and $a \in \ell^\infty(\mathbb{Z}^N)$.
  Let $\{ v_n \} \subset D^{1,\mathcal{H}}(\mathbb{Z}^{N})$ be a minimizing sequence of \eqref{eqconstrained} for any $t > 0$, and $\{y_n\} \subset \mathbb{Z}^{N}$ be an arbitrary sequence ensuring $\{x_n - y_n\} $ bounded in $\mathbb{Z}^N$, where $\{x_n\} \subset \mathbb{Z}^{N}$ such that $\abs{v_n(x_n)} = \norm{v_n}_\infty$. Then $\{u_n(x) := v_n(x+y_n)\}$ contains a convergent subsequence, still denoted by $\{u_n\}$, that converges to $u \in D^{1,\mathcal{H}}(\mathbb{Z}^{N})$.
  Moreover, $u$ satisfies
  \begin{equation}
      \label{eqT1}
      \int_E{\left(\frac{1}{p}\abs{\nabla u}^p+ \frac{\tilde{a}(x)}{q}\abs{\nabla u}^q\right)}\, dw =S_{t}
  \end{equation}
    and solves
\begin{equation}
    \label{eqtildea}
    -\mathrm{div}(\abs{\nabla u}^{p-2}\nabla u+\tilde{a}(x)\abs{\nabla u}^{q-2}\nabla u)= \lambda \abs{u}^{r-2}u\quad \text{ in } \mathbb{Z}^N,
\end{equation}
where $\tilde{a}(x) = \lim\limits_{n \to \infty}{a(x+y_n)}$ and $\lambda$ is a positive constant. Moreover, if $v_n$ are non-negative, then $u$ is positive.
\end{theorem}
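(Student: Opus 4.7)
The plan is a concentration-compactness argument on $\mathbb{Z}^N$ exploiting the translation symmetry of the lattice. First, since $a\geqslant 0$, the constraint $J(v_n)=t$ bounds $\|v_n\|_r$, while the convergence $I(v_n)\to S_t$ controls $\|\nabla v_n\|_p$; combined with the embeddings of Proposition~\ref{proembed}, this shows $\{v_n\}$ is bounded in $D^{1,\mathcal{H}}(\mathbb{Z}^N)$. Because $a\in\ell^\infty(\mathbb{Z}^N)$, the shifted weight $a_n(x):=a(x+y_n)$ is uniformly bounded by $\|a\|_\infty$, so $u_n(x):=v_n(x+y_n)$ is also bounded in $D^{1,\mathcal{H}}(\mathbb{Z}^N)$. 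Non-vanishing of $\{u_n\}$ is extracted from the elementary interpolation $\|v_n\|_r^r\leqslant\|v_n\|_\infty^{r-p^*}\|v_n\|_{p^*}^{p^*}$, valid for $r>p^*$ via $\ell^{p^*}\hookrightarrow\ell^r$: combined with the boundedness of $\|v_n\|_{p^*}$ and $\|v_n\|_r^r=rt$, this forces $\|v_n\|_\infty\geqslant c_0>0$, hence $|u_n(x_n-y_n)|=|v_n(x_n)|\geqslant c_0$.

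Because $\{x_n-y_n\}$ lies in a finite subset of $\mathbb{Z}^N$, I pass to a subsequence with $x_n-y_n\equiv z_0$. A diagonal extraction then produces $u_n(x)\to u(x)$ at every $x\in\mathbb{Z}^N$, and reflexivity of $D^{1,\mathcal{H}}$ yields $u_n\rightharpoonup u$ in $D^{1,\mathcal{H}}$; in particular $|u(z_0)|\geqslant c_0$, so $u\not\equiv 0$. Since $a\in\ell^\infty$ and $\mathbb{Z}^N$ is countable, a further diagonal extraction makes $a_n(x)\to\tilde a(x)$ for every $x$. Writing $w_n:=u_n-u$ (so $w_n\rightharpoonup 0$ and $w_n\to 0$ pointwise), the discrete Brezis-Lieb identity on $\ell^r$ gives $J(u_n)=J(u)+J(w_n)+o(1)$, and edgewise Brezis-Lieb for $\ell^p$ and $\ell^q$, combined with the pointwise convergence $a_n\to\tilde a$ and a dominated-convergence argument, yields
\begin{equation*}
I(v_n)=I_n(u_n)=I_{\tilde a}(u)+I_{\tilde a}(w_n)+o(1),
\end{equation*}
where $I_n$ and $I_{\tilde a}$ denote the functional \eqref{eqintegral} with weights $a_n$ and $\tilde a$, respectively.

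Since $u\not\equiv 0$, $t_1:=J(u)>0$; set $t_2:=\lim J(w_n)\geqslant 0$, so $t_1+t_2=t$. Admissibility gives $I_{\tilde a}(u)\geqslant S_{t_1}$ and $\liminf I_{\tilde a}(w_n)\geqslant S_{t_2}$, hence $S_t\geqslant S_{t_1}+S_{t_2}$; combining this with a sub-additivity $S_t\leqslant S_{t_1}+S_{t_2}$ obtained by concatenating admissible profiles in widely separated translates forces $t_2=0$, so $u\in M_t$ and $I_{\tilde a}(u)=S_t$. The decomposition then provides $I_{\tilde a}(w_n)\to 0$ and $J(w_n)\to 0$, from which the coercivity of the modular on $D^{1,\mathcal{H}}$ gives strong convergence $u_n\to u$ in $D^{1,\mathcal{H}}(\mathbb{Z}^N)$. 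The Lagrange multiplier rule explained after \eqref{eqconstrained} then yields \eqref{eqtildea} for some $\lambda>0$, and when $v_n\geqslant 0$ the pointwise limit $u\geqslant 0$ combined with a discrete strong maximum principle applied to \eqref{eqtildea}—if $u(x_0)=0$ and $u\geqslant 0$, then $L(u)(x_0)=-\sum_{y\sim x_0}u(y)^{p-1}-\sum_{y\sim x_0}\tfrac{\tilde a(x_0)+\tilde a(y)}{2}u(y)^{q-1}=\lambda u(x_0)^{r-1}=0$ forces $u(y)=0$ for every neighbour $y$—propagates $u(z_0)>0$ to all of $\mathbb{Z}^N$. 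The principal technical obstacle is the Brezis-Lieb splitting of $I_n$ with the $n$-dependent weight $a_n$: the cross term $\sum(a_n-\tilde a)(x)|\nabla u_n|^q$ must be shown to vanish, which requires a tightness consequence of the non-vanishing anchored at $z_0$ together with the uniform bound $\|a_n\|_\infty\leqslant\|a\|_\infty$.
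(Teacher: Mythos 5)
Your overall concentration--compactness strategy matches the paper's, and the preliminary steps (boundedness, non-vanishing via $\norm{v_n}_r^r\leqslant\norm{v_n}_{p^*}^{p^*}\norm{v_n}_\infty^{r-p^*}$, pointwise extraction of $u$ and $\tilde a$, and the positivity/connectivity argument at the end) are sound. But the core of the proof has two genuine gaps. First, the dichotomy exclusion does not close: from $S_t\geqslant S_{t_1}+S_{t_2}$ and plain subadditivity $S_t\leqslant S_{t_1}+S_{t_2}$ you only get $S_t=S_{t_1}+S_{t_2}$, which does not force $t_2=0$. You would need \emph{strict} subadditivity, and for the non-homogeneous double phase functional (no scaling invariance) strictness is not free; it is precisely here that the hypothesis $r\geqslant q$ --- which your argument never uses --- enters the paper's proof. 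After normalizing $t=\frac1r$, the paper assumes $0<\norm{u}_r<1$, applies Br\'ezis--Lieb, bounds each piece from below by $S$ times $\norm{\cdot}_r^p$ or $\norm{\cdot}_r^q$, and uses $p<q\leqslant r$ together with $\norm{u}_r,\lim_n\norm{u_n-u}_r\in(0,1)$ to get the strict chain $S> S\bigl(\lim_n\norm{u_n-u}_r^q+\norm{u}_r^q\bigr)\geqslant S\bigl(\lim_n\norm{u_n-u}_r^r+\norm{u}_r^r\bigr)=S$ (display \eqref{eqbi}), a contradiction. Without such a strictness mechanism your step ``forces $t_2=0$'' is unsupported.

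Second, the weighted Br\'ezis--Lieb splitting $I_n(u_n)=I_{\tilde a}(u)+I_{\tilde a}(w_n)+o(1)$, with the limiting weight $\tilde a$ on the remainder, is asserted but not proved: $a_n\to\tilde a$ only pointwise, so if gradient mass of $w_n=u_n-u$ escapes to infinity the error $\int_E(a_n-\tilde a)\abs{\nabla w_n}^q\,dw$ need not vanish, and invoking ``tightness anchored at $z_0$'' is circular, since tightness of the full mass is exactly what the dichotomy exclusion is supposed to deliver. The paper avoids this entirely: Lemma \ref{lemkey} establishes only the one-sided inequality \eqref{eqtrans}, which keeps the weight $a_n$ (not $\tilde a$) on the remainder $\nabla(u_n-u)$; the remainder energy $\frac1p\int_E\abs{\nabla(u_n-u)}^p\,dw+\frac1q\int_E a_n\abs{\nabla(u_n-u)}^q\,dw$ can then be translated back by $y_n$ and compared with $S$ directly, which is all the lower bound requires, and strong convergence follows from \eqref{eqbi} and Remark \ref{remarkequi} rather than from an exact splitting. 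Two smaller points: the bounds $I_{\tilde a}(u)\geqslant S_{t_1}$ and $\varliminf_n I_{\tilde a}(w_n)\geqslant S_{t_2}$ need justification (translation plus approximation, and continuity of $t\mapsto S_t$), since $S_t$ is defined with the weight $a$, not $\tilde a$; and the Euler--Lagrange equation \eqref{eqtildea} with weight $\tilde a$ still has to be derived after convergence --- the paper does this via Ekeland's variational principle and translated test functions, identifying $\lambda=\lim_n\lambda_n>0$, whereas ``the Lagrange multiplier rule explained after \eqref{eqconstrained}'' refers to the functional with weight $a$, not $\tilde a$.
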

The existence of $\tilde{a}$ is provided in Lemma \ref{lemkey}. In general, $I(u)$ is not equal to $S_{t}$ since $\tilde{a}$ varies depending on the choice of $\{y_n\}$ in Theorem \ref{theoremmain}. However, under some certain assumptions on $a$, we can prove that $S_{t}$ is achieved at some $u \in D^{1,\mathcal{H}}(\mathbb{Z}^{N})$.

For $T \in \mathbb{N}^+$, we call that a function $b$ on $\mathbb{Z}^N$ be $T$-periodic, if
\[
    b(x + Te_i) = b(x), \quad \forall x \in \mathbb{Z}^N, 1\leqslant i \leqslant N,
\]
where $e_i$ is the unit vector in the $i$-th coordinate.

We call that a function $b$ on $\mathbb{Z}^N$ be bounded potential, if
    \[
        0 \leqslant b(x) \leqslant b_\infty:=\lim_{\abs{x} \to \infty}b(x) < \infty, \quad
        \forall x \in \mathbb{Z}^N.
    \]
\begin{corollary}
\label{corocases}
    Let $r > p^*$ and $r \geqslant q$. If $a$ is T-periodic or bounded potential, then for each $t > 0$, the problem \eqref{eqconstrained} (or \eqref{eq2}) has a positive solution $u$.
\end{corollary}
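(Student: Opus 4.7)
The plan is to invoke Theorem \ref{theoremmain} in each case with a carefully chosen translation sequence $\{y_n\}$ so that the limiting weight $\tilde{a}$ either equals $a$ outright (in the periodic case) or dominates $a$ pointwise (in the bounded potential case), which will force the limit $u$ to be a minimizer of $S_t$ for the original weight. First, since $\abs{\abs{u(y)} - \abs{u(x)}} \leqslant \abs{u(y) - u(x)}$ and $J(\abs{u}) = J(u)$, any minimizing sequence $\{v_n\}$ of \eqref{eqconstrained} may be replaced by $\{\abs{v_n}\}$, after which the last clause of Theorem \ref{theoremmain} delivers a \emph{positive} limit $u$. Both the $T$-periodic and the bounded potential hypotheses imply $a \in \ell^\infty(\mathbb{Z}^N)$, so Theorem \ref{theoremmain} is applicable. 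Fix then a nonnegative minimizing sequence $\{v_n\}$ and let $x_n$ satisfy $\abs{v_n(x_n)} = \norm{v_n}_\infty$.

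In the $T$-periodic case I would write $x_n = T k_n + r_n$ with $k_n \in \mathbb{Z}^N$ and $r_n \in \{0,\ldots,T-1\}^N$, and set $y_n := T k_n$, so $\{x_n - y_n\} = \{r_n\}$ is bounded and $T$-periodicity yields $a(\cdot + y_n) \equiv a$, hence $\tilde{a} \equiv a$. Theorem \ref{theoremmain} returns $u \in D^{1,\mathcal{H}}(\mathbb{Z}^N)$ with $I(u) = S_t$; the embedding $D^{1,\mathcal{H}}(\mathbb{Z}^N) \hookrightarrow \ell^r(\mathbb{Z}^N)$ from Proposition \ref{proembed} combined with the translation invariance of $J$ gives $J(u) = \lim_n J(v_n(\cdot + y_n)) = \lim_n J(v_n) = t$, so $u \in M_t$ is the desired positive minimizer, and it solves \eqref{eq2} by the Lagrange multiplier argument stated after Theorem \ref{theoremcompact}.

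In the bounded potential case I would extract a subsequence so that either $\{x_n\}$ is bounded or $\abs{x_n} \to \infty$. In the first subcase, $y_n \equiv 0$ reduces matters to the previous argument. In the second, taking $y_n := x_n$ makes $x_n - y_n \equiv 0$ trivially bounded, and for each fixed $x$ one has $\abs{x + y_n} \to \infty$, so $a(x + y_n) \to a_\infty$ and $\tilde{a} \equiv a_\infty$. Theorem \ref{theoremmain} then produces $u \in M_t$ (again by Proposition \ref{proembed} and translation invariance of $J$) satisfying $I_{a_\infty}(u) = S_t$, where $I_{a_\infty}$ denotes the functional built with the constant weight $a_\infty$ in place of $a$. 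Since $a \leqslant a_\infty$ pointwise, $I(u) \leqslant I_{a_\infty}(u) = S_t$, while membership in $M_t$ forces $I(u) \geqslant S_t$; hence $I(u) = S_t$, so $u$ is a positive minimizer of the original functional and solves \eqref{eq2} via the same Lagrange multiplier argument.

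The main conceptual content is already absorbed into Theorem \ref{theoremmain}; the only delicate point I anticipate is the asymmetric use of $a \leqslant a_\infty$ in the escaping subsequence of the bounded potential case, which is what allows us to transfer the limiting identity $I_{a_\infty}(u) = S_t$ back to the original functional $I$. Everything else is bookkeeping: the correct choice of $\{y_n\}$ in each case, the continuity of $J$ along $D^{1,\mathcal{H}}$-convergent sequences via the embedding of Proposition \ref{proembed}, and the translation invariance of $J$ (and, in the periodic case, of $I$).
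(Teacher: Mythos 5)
Your proposal is correct, and in the $T$-periodic case it is essentially the paper's own argument (translate by $y_n=Tk_n$ so that $\tilde a=a$ and quote Theorem \ref{theoremmain}). In the bounded potential case, however, you take a genuinely different and in fact shorter route. The paper first solves the constant-coefficient problem at infinity, defines $S_\infty$ as in \eqref{eqinfty}, proves the strict inequality $S<S_\infty$ by translating a nontrivial gradient of the limit-problem minimizer onto a point where $a(x_0)<a_\infty$, and then uses this strict inequality to rule out the escape $\abs{x_n}\to\infty$ of the maximum points, finally translating the limit back by $x^*=\lim x_n$. You instead split into the two subcases directly: when $\{x_n\}$ is bounded you take $y_n\equiv 0$ (so $\tilde a=a$ and no translation back is needed), and when $\abs{x_n}\to\infty$ you take $y_n=x_n$, obtain $\tilde a\equiv a_\infty$ and \eqref{eqT1} in the form $I_{a_\infty}(u)=S_t$, and then transfer this back to the original functional via the pointwise bound $a\leqslant a_\infty$ (which is exactly the defining property of a bounded potential): $I(u)\leqslant I_{a_\infty}(u)=S_t$ together with $u\in M_t$ (which indeed follows from the $D^{1,\mathcal{H}}$-convergence of Theorem \ref{theoremmain}, Proposition \ref{proembed}(v) and translation invariance of $J$) forces $I(u)=S_t$. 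This one-line comparison makes the auxiliary problem $S_\infty$, the strict inequality $S<S_\infty$, and the dependence of case (ii) on case (i) unnecessary; the conclusion (a positive minimizer, hence a solution of \eqref{eq2} by the Lagrange multiplier argument and the positivity clause of Theorem \ref{theoremmain}) follows in either branch of your dichotomy. What the paper's longer argument buys in exchange is the extra structural information that the concentration points cannot escape when $a\not\equiv a_\infty$ (so the minimizer arises as a genuine translate of the limit of the original sequence), whereas your argument is agnostic about whether the escaping branch actually occurs; logically this is harmless, since the dichotomy is exhaustive and the desired conclusion is derived in both branches.
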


 Theorem \ref{theoremmain} and Corollary \ref{corocases} can be extended to
quasi-transitive graphs. Let $G = (V,E)$ be a quasi-transitive graph
satisfying $d$-isoperimetric inequality for some $d \geqslant 2$ with $\mu \equiv 1$ and $w \equiv 1$. On one hand, if we replace ${x_n-y_n}$ with ${g_n} \subset Aut(G)$ such that $\{g_n(x_n)\}$ bounded in $V$ and $x+y_n$ with $g_n^{-1}(x)$, then Theorem \ref{theoremmain} holds. On the other hand, if we replace $T$-periodicity with $H$-invariance, $\abs{x}$ with $d(x,x_0)$ for some $x_0 \in V$, and $a(x) \leqslant a_\infty$ with $a(x) < a_\infty$, where $H \leqslant Aut(G)$ and the action of $H$ on $G$ has finitely many orbits, then Corollary \ref{corocases} holds. For more details of quasi-transitive graphs, we may refer to \cite{hua2023existence}.

Finally, motivated by \cite{Fan}, we study the asymptotic behaviour of Lagrangian multiplier $\lambda$. If $a(x) \equiv 0$, then \eqref{eq2} boils down to the following $p$-Laplacian problem
\begin{equation}
    \label{eqp}
    -\mathrm{div}(\abs{\nabla u}^{p-2}\nabla u) = \lambda \abs{u}^{r-2}u \quad \text{ in } \mathbb{Z}^N.
\end{equation}
If there exists  $\lambda_0$ such that problem \eqref{eqp} with $\lambda = \lambda_0$ has a positive (or nontrivial) solution $u_0$, then, thanks to the homogeneity, for any $\lambda_1 > 0$, $u_1 = \left(\frac{\lambda_0}{\lambda_1}\right)^\frac{1}{r-p}u_0$ is a solution with $\lambda = \lambda_1$. However, this is not the case when $a(x) \not\equiv 0$ due to the loss of homogeneity.

We assume that the hypotheses of Theorem \ref{theoremcompact} or Corollary \ref{corocases} hold. For each $t > 0$, put
\[
A_t = \left\{u \in D^{1,\mathcal{H}}(\mathbb{Z}^{N}) : u \text{ is a solution of problem } \eqref{eqconstrained} \text{ and } u > 0\right\}.
\]
 We know that, for each $u_t \in A_t$, there is a positive number $\lambda = \lambda(u_t)$, the Lagrangian multiplier associated with $u_t$, such that $(u_t, \lambda(u_t))$ is a solution of problem \eqref{eq2}. For $\lambda = \lambda(u_t)$ we have the following asymptotic result.
\begin{theorem}
\label{theoremeg}
Under the hypotheses of Theorem \ref{theoremcompact} with $r > q$ or Corollary \ref{corocases}, for any $u_t \in A_t$, we have that
\[
    \lambda(u_t) \to 0 \text{ as } t \to +\infty, \text{ and } \,\,\lambda(u_t) \to \infty \text{ as } t \to 0^{+}.
\]
\end{theorem}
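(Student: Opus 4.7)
The plan is to first derive a two-sided bound relating $\lambda(u_t)$ to $S_t/t$, and then estimate $S_t/t$ asymptotically as $t \to 0^+$ and as $t \to \infty$.

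Write $I(u) = \tfrac{1}{p}\mathcal{I}_p(u) + \tfrac{1}{q}\mathcal{I}_q(u)$ for the natural splitting of $I$ into its $p$- and $q$-Dirichlet parts, both nonnegative. Testing the weak form of \eqref{eq2} against $v = u_t$, the integration by parts formula \eqref{eqgreen} gives $\langle I'(u_t), u_t\rangle = \mathcal{I}_p(u_t) + \mathcal{I}_q(u_t)$ and $\langle J'(u_t), u_t\rangle = \|u_t\|_r^r = r t$, hence $\mathcal{I}_p(u_t) + \mathcal{I}_q(u_t) = \lambda(u_t)\, r t$. Combining this identity with $I(u_t) = S_t$ and using $p < q$ yields
\[
    \frac{p\, S_t}{r t} \ \leq \ \lambda(u_t) \ \leq \ \frac{q\, S_t}{r t},
\]
so it suffices to prove $S_t / t \to \infty$ as $t \to 0^+$ and $S_t / t \to 0$ as $t \to \infty$.

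For $t \to 0^+$, the Sobolev-type embedding $D^{1,\mathcal{H}}(\mathbb{Z}^N) \hookrightarrow \ell^r(\mathbb{Z}^N)$ from Proposition \ref{proembed} provides a constant $C > 0$ with $\|u\|_r^p \leq C\, \mathcal{I}_p(u)$ for every $u \in D^{1,\mathcal{H}}(\mathbb{Z}^N)$. Applied to $u_t \in A_t$, this gives $S_t \geq \tfrac{1}{p}\mathcal{I}_p(u_t) \geq \tfrac{1}{pC}(r t)^{p/r}$, so $S_t / t \geq c\, t^{p/r - 1} \to \infty$ since $r > p^* > p$.

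For $t \to \infty$ I construct explicit test functions, the choice depending on the hypothesis. Under Theorem \ref{theoremcompact} with $r > q$, fix any compactly supported $v_0 \in D^{1,\mathcal{H}}(\mathbb{Z}^N) \setminus \{0\}$; for $s > 0$ the dilation $sv_0$ satisfies $J(sv_0) = s^r \|v_0\|_r^r / r$ and $I(sv_0) = \tfrac{s^p}{p}\mathcal{I}_p(v_0) + \tfrac{s^q}{q}\mathcal{I}_q(v_0)$, so choosing $s = (rt/\|v_0\|_r^r)^{1/r}$ yields $S_t \leq I(sv_0) \leq C_1 t^{p/r} + C_2 t^{q/r} = o(t)$ because $p, q < r$. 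Under Corollary \ref{corocases}, $a$ is bounded but one may have $r = q$, so the dilation argument fails; I instead use a plateau function $u_R$ equal to a constant $c_R$ on $B_R := \{|x| \leq R\}$, interpolated linearly to $0$ on the shell $\{R < |x| \leq R+1\}$, and zero elsewhere. Since $|B_R|$ grows like $R^N$ and the edge boundary of $B_R$ has size $\leq C R^{N-1}$, the constraint $J(u_R) = t$ forces $c_R^r \leq C\, t/R^N$ and
\[
    I(u_R) \ \leq \ C\bigl(c_R^p + \|a\|_\infty c_R^q\bigr) R^{N-1}.
\]
Taking $R = \lfloor t^{1/N}\rfloor$ keeps $c_R$ bounded, giving $S_t \leq I(u_R) \leq C\, t^{(N-1)/N} = o(t)$.

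The main obstacle is the borderline case $r = q$ permitted by Corollary \ref{corocases}: there, the dilation argument only produces $S_t \leq C(t^{p/r} + t)$, which fails to be $o(t)$, so the plateau construction above is essential; it exploits both the boundedness of $a$ and the isoperimetric scaling $R^{N-1}$ for the edge boundary of balls in $\mathbb{Z}^N$.
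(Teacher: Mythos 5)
Your proposal is correct and follows essentially the same route as the paper: reduce to the asymptotics of $S_t/t$ via the identity $\mathcal{I}_p(u_t)+\mathcal{I}_q(u_t)=\lambda(u_t)\,rt$ together with $I(u_t)=S_t$, use the Sobolev inequality for the lower bound as $t\to 0^+$, scale a fixed compactly supported function (the paper uses $\delta_y$) when $r>q$, and use flat plateau-type test functions on large balls/boxes with surface-to-volume ratio $O(R^{-1})$ to handle the bounded-$a$ (in particular $r=q$) case as $t\to\infty$. The only differences are cosmetic: the paper's large-$t$ test function in the Corollary case is the normalized indicator of a box $[-n/2,n/2)^N$ rather than your ball plateau, and it splits the cases $r>q$ and $r=q$ where you treat all of the Corollary uniformly.
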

The rest of the paper is organized as follows. In Section \ref{sec:preliminaries}, we introduce the Musielak-Orlicz spaces on locally finite graphs and prove some basic facts of double phase operators. In particular, we prove  some
elementary properties of Musielak-Orlicz on lattice graphs and we prove Theorem \ref{theorem1} in Subsection \ref{ZN}. In Section \ref{sectioncompact}, we prove a compact lemma and give the proof of Theorem \ref{theoremcompact}. In Section \ref{sectionmain}, we prove Theorem \ref{theoremmain} and Corollary \ref{corocases} by excluding cases that $u = 0$ and $0 < J(u) <t$. In the final section  \ref{sectioneg}, we prove Theorem \ref{theoremeg}.
\section{Preliminaries}
\label{sec:preliminaries}

In this section, we first introduce some notations on locally finite graphs. Denote $C(V\times V)$ as the set of all real functions defined on $V\times V$. For any $g \in C(V \times V)$ and fixed $x_0 \in V$, assume that $\{y_1,y_2,...,y_k\}$ be all neighbours of $x_0$, we write
\[
    \sum_{y\sim{x_0}}g(x_0, y) := \sum_{\{y\in V \ : \ y x_0 \in E\}}g(x_0, y) = g(x_0,y_1) + g(x_0,y_2) + \cdots +g(x_0,y_k).
\]

Define the difference operator
\begin{equation}
\begin{aligned}
\nabla \colon C(V) &\to C(V \times V)\\
 u &\to (\nabla u)(x,y) := u(y)-u(x)
  \end{aligned}
\end{equation}
for all $(x,y) \in V \times V$.

The divergence operator is denoted by
\begin{equation}
\begin{aligned}
    \mathrm{div} \colon C(V \times V) &\to C(V)\\
    f &\to (\mathrm{div} f)(x):=\frac{1}{2\mu(x)}\sum_{y\sim{x}}w_{xy}(f(x,y)-f(y,x))
  \end{aligned}
\end{equation}
for all $x \in V$.

\begin{remark}
The operators $\nabla$ and $\mathrm{div}$ are introduced in
  \cite{hua2015time} and \cite{Dei}, and they are employed to establish Proposition \ref{progreen}, which presents Green's formula on graphs.
\end{remark}

The $p$-Laplacian on graphs (see \cite{hua2021existence}) of $u \in C(V)$ can be defined as follows
\begin{equation*}
  \Delta_p u(x) := \mathrm{div}(\abs{\nabla u}^{p-2}\nabla u)(x) =
  \frac{1}{\mu(x)}\sum_{y\sim{x}}w_{xy}\abs{u(y)-u(x)}^{p-2}(u(y)-u(x))
\end{equation*}
for all $x \in V$ with $1 < p < \infty$.

Now we define the double phase operator on graphs of $u\in C(V)$ as follows
\begin{equation}
  \label{eqdoublephase}
  \begin{aligned}
    L(u)(x_0):
    = & -\mathrm{div}(\abs{\nabla u}^{p-2}\nabla u+a(x)\abs{\nabla u}^{q-2}\nabla u)(x_0)\\
    = & -\frac{1}{\mu(x_0)}\sum_{y\sim{x_0}}w_{x_0y}\abs{u(y)-u(x_0)}^{p-2}(u(y)-u(x_0))\\
      & -\frac{1}{\mu(x_0)}\sum_{y\sim{x_0}}w_{x_0y}\frac{a(x_0)+a(y)}{2}\abs{u(y)-u(x_0)}^{q-2}(u(y)-u(x_0))
      \end{aligned}
\end{equation}
for all $x_0 \in V$, $1<p<q<\infty$ and $0 \leqslant a(\cdot) \in C(V)$.

\begin{remark}
  Throughout the paper, if $a \in C(V)$, then we also write $a(x)$ as $a_{V \times V} \in C(V \times V)$ when there is no confusion, where $a_{V \times V}(x_0,y_0):=a(x_0)$ for all $(x_0,y_0) \in V \times V$.
\end{remark}
The integral of $u\in C(V)$ over $V$ is defined by
\[
\int_{V}u\, d\mu := \sum_{x \in V}\mu(x)u(x).
\]
For any $1 \leqslant p \leqslant \infty$, $\ell^p(V)$ denotes the linear space of $p$-the integrable functions on $V$ equipped with the norm
\[
  \norm{u}_{\ell^p(V)}:=
  \begin{cases}
    (\int_{V}\abs{u}^p\, d\mu)^{\frac{1}{p}}, \quad  &1\leqslant p<\infty,\\
    ~ \sup\limits_{x\in V}\abs{u(x)},     &p = \infty.
\end{cases}
\]
Similarly, the integral of $f\in C(V \times V)$ over $V \times V$ is defined by
\[
\int_{V \times V}f\, dw := \frac{1}{2}\sum_{(x,y) \in V \times V}w_{xy}f(x,y).
\]
It follows from the definition of $w$ that
\[
\int_{V \times V}f\, dw =\frac{1}{2}\sum_{xy \in E }w_{xy}f(x,y).
\]
Thus we prefer to work with $\ell^p(E)$, which denotes the linear space of $p$-the integrable functions on $E$ equipped with the norm
\[
  \norm{f}_{\ell^p(E)}:=
  \begin{cases}
    (\int_{E}\abs{f}^p\, dw)^{\frac{1}{p}}, \quad  &1\leqslant p<\infty,\\
    \sup\limits_{xy\in E}\abs{f(x,y)},     &p = \infty,
\end{cases}
\]
for any $1 \leqslant p \leqslant \infty$. Moreover, for $f \in \ell^1(E)$, it is easy to have that
\[
   \int_{E}f\, dw = \frac{1}{2}\sum_{x \in V}\sum_{y\sim{x}} w_{xy}f(x,y).
\]
From a point of view of measure theory, both $(V,2^V,\mu)$ and $(E,2^E,\frac{w}{2}\big|_E)$ are $\sigma$-finite,
complete measure spaces equipped with a separable measure. Thus, we conclude some elementary properties of $\ell^p(V)$ and $\ell^p(E)$, see the details in \cite[Theorem 4.1.3]{bogachev2007measure},\cite[Theorem 4.7.15]{bogachev2007measure} and  \cite[Theorem 3.4.4]{diening2011lebesgue}.
\begin{lemma}
  Let $G=(V,E)$ be a connected locally finite graph, both $\ell^p(V)$ and $\ell^p(E)$ have the following properties:
  \begin{enumerate}[label=(\roman*)]
      \item  they are Banach spaces for $1 \leqslant p \leqslant \infty$;
      \item  they are uniformly convex, and so reflexive for $1 < p < \infty$;
      \item  they are separable for $1 \leqslant p < \infty$.
  \end{enumerate}
\end{lemma}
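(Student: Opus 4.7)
The plan is to realize $\ell^p(V)$ and $\ell^p(E)$ as ordinary $L^p$-spaces over the measure spaces $(V,2^V,\mu)$ and $(E,2^E,\tfrac{1}{2}w|_E)$, and then invoke the three classical $L^p$ theorems cited immediately above the statement. The only genuine work is to confirm that these underlying measure spaces satisfy the three hypotheses required by those theorems: completeness of the $\sigma$-algebra, $\sigma$-finiteness of the measure, and separability of the measure.

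First I would dispatch completeness: it is automatic since the $\sigma$-algebras are full power sets, so every subset is measurable and every null set has measurable subsets. Next, for $\sigma$-finiteness, the connectedness and local finiteness of $G$ let me exhaust $V$ by the balls $B_n := \{x \in V : d(x,x_0) \leqslant n\}$ around a fixed $x_0$; each $B_n$ is finite, and $\mu$ assigns finite mass to points by hypothesis, so $\mu(B_n) < \infty$ and $V = \bigcup_{n \geqslant 1} B_n$. A parallel decomposition via $E_n := \{xy \in E : x,y \in B_n\}$ gives $\sigma$-finiteness of the edge measure, using that each $w_{xy}$ is finite. Finally, separability of the measure — i.e., the measure algebra admits a countable dense subfamily — is immediate from the countability of $V$ and $E$, since the collection of finite subsets is countable and already generates the $\sigma$-algebra.

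With the three hypotheses verified, the conclusions follow directly. For (i), $L^p$ of any measure space is a Banach space for $1 \leqslant p \leqslant \infty$, which is \cite[Theorem 4.1.3]{bogachev2007measure}. For (ii), Clarkson's inequalities in the form of \cite[Theorem 3.4.4]{diening2011lebesgue} yield uniform convexity of $L^p$ for $1 < p < \infty$, and the Milman–Pettis theorem then delivers reflexivity. For (iii), separability of $L^p$ for $1 \leqslant p < \infty$ is precisely \cite[Theorem 4.7.15]{bogachev2007measure} under the hypotheses verified above.

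I do not expect any real obstacle: the result is a bookkeeping check that standard $L^p$-theory applies in the discrete setting. The only mildly delicate point is the factor $\tfrac{1}{2}$ in the definition of the edge integral, but since it merely rescales $w|_E$ it preserves all of the measure-theoretic properties invoked and therefore does not affect the conclusions.
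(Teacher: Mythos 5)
Your proposal is correct and takes essentially the same route as the paper: the paper likewise observes that $(V,2^V,\mu)$ and $(E,2^E,\tfrac{w}{2}\big|_E)$ are $\sigma$-finite, complete measure spaces with separable measures and then cites the very same results \cite[Theorem 4.1.3]{bogachev2007measure}, \cite[Theorem 4.7.15]{bogachev2007measure} and \cite[Theorem 3.4.4]{diening2011lebesgue} without further detail. Your write-up merely fills in the verification of those hypotheses (power-set completeness, exhaustion by finite balls for $\sigma$-finiteness, finite subsets for measure separability), all of which is accurate.
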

\subsection{The Musielak-Orlicz space}
\label{subMOspace}
\quad \\
In this subsection, we recall some basic definitions and properties of Musielak-Orlicz space $L^\varphi(A,\mu)$,
where $(A,\Sigma,\mu)$ is a $\sigma$-finite, complete measure space.
For more details, please see \cite{diening2011lebesgue}.
\begin{definition}
  \label{defspace}
  A convex, left-continuous function $\varphi \colon [0, \infty) \to [0, \infty]$
with
\[\varphi(0) = 0,\quad \lim_{t\to 0^+}\varphi(t) = 0
, \quad \text{and} \quad \lim_{t\to \infty}\varphi(t) = \infty
\]
is called a $\Phi$-function.
It is called positive if $\varphi(t) > 0$ for all $t > 0$.

\noindent Let $(A,\Sigma,\mu)$ be a $\sigma$-finite, complete measure space.
  A real function $\varphi \colon A\times[0, \infty) \to [0, \infty]$ is said to be a generalized $\Phi$-function on
$(A,\Sigma,\mu)$, denoted by $\varphi \in \Phi(A,\mu)$. If
\begin{enumerate}[label=(\roman*)]
  \item  $\varphi(y,\cdot)$ is a $\Phi$-function for every $y \in A$.
  \item  $y \to \varphi(y,t)$ is measurable for every $t \geqslant 0$.
\end{enumerate}
Let $\varphi \in \Phi(A,\mu)$ and let $\varrho_\varphi$ be given by
\[
\varrho_\varphi(f) :=\int_A\varphi(y,\abs{f(y)})\, d\mu(y)
\]
for all $\mu$-measurable real functions $f$ on $A$. Then the Musielak-Orlicz space
is defined by
\[
  L^\varphi(A,\mu):=\left\{f \colon A \to \mathbb{R} ~\text{measurable}:
  \varrho_\varphi(\lambda f) < \infty~\text{for some}~\lambda > 0\right\},
\]
equipped with the norm
\[
  \norm{f}_\varphi:=\mathrm{inf}\left\{\lambda>0:
  \varrho_\varphi(\frac{f}{\lambda})\leqslant 1\right\}.
\]
\end{definition}
\begin{proposition}
  \label{probanach}
  Let $\varphi \in \Phi(A,\mu)$, then $L^\varphi(A,\mu)$ is a Banach space.
\end{proposition}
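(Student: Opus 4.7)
The plan is the classical two-stage argument for Luxemburg-type norms: first verify that $\norm{\cdot}_\varphi$ is a genuine norm turning $L^\varphi(A,\mu)$ into a normed vector space, then prove completeness via the standard telescoping trick on a rapidly Cauchy subsequence.

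\textbf{Vector space and norm structure.} Absolute homogeneity $\norm{\alpha f}_\varphi = \abs{\alpha}\norm{f}_\varphi$ is immediate via the change of variable $\lambda \mapsto \lambda/\abs{\alpha}$ in the infimum. For the triangle inequality (which simultaneously shows $L^\varphi(A,\mu)$ is closed under addition), I would fix $\lambda_1 > \norm{f}_\varphi$ and $\lambda_2 > \norm{g}_\varphi$ and apply pointwise convexity of $\varphi(y,\cdot)$:
\[
\varphi\!\left(y,\tfrac{\abs{f(y)+g(y)}}{\lambda_1+\lambda_2}\right) \leqslant \tfrac{\lambda_1}{\lambda_1+\lambda_2}\,\varphi\!\left(y,\tfrac{\abs{f(y)}}{\lambda_1}\right) + \tfrac{\lambda_2}{\lambda_1+\lambda_2}\,\varphi\!\left(y,\tfrac{\abs{g(y)}}{\lambda_2}\right),
\]
integrate over $A$ to get $\varrho_\varphi((f+g)/(\lambda_1+\lambda_2)) \leqslant 1$, and take infima. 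Definiteness reduces to the \emph{unit-ball property} $\norm{f}_\varphi \leqslant 1 \Rightarrow \varrho_\varphi(f) \leqslant 1$, which I would prove by choosing $\lambda_n \downarrow \norm{f}_\varphi$ with $\varrho_\varphi(f/\lambda_n) \leqslant 1$ and passing to the limit by monotone convergence; this is legitimate because left-continuity of $\varphi(y,\cdot)$ at the limit is built into the definition of a $\Phi$-function.

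\textbf{Completeness.} Given a Cauchy sequence $\{f_n\} \subset L^\varphi(A,\mu)$, extract a subsequence $\{f_{n_k}\}$ with $\norm{f_{n_{k+1}}-f_{n_k}}_\varphi \leqslant 2^{-k-1}$ and set
\[
g_m := \sum_{k=1}^{m}\abs{f_{n_{k+1}}-f_{n_k}}, \qquad g := \sup_{m} g_m \in [0,\infty].
\]
The triangle inequality gives $\norm{g_m}_\varphi \leqslant 1/2$, so $\varrho_\varphi(g_m) \leqslant 1/2$ by the unit-ball property, and monotone convergence yields $\varrho_\varphi(g) \leqslant 1/2$. In particular $g < \infty$ $\mu$-a.e., the telescoping series $\sum_k (f_{n_{k+1}} - f_{n_k})$ converges absolutely $\mu$-a.e., and $f_{n_k} \to f$ pointwise a.e.\ for some measurable $f$. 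To upgrade to norm convergence, fix $\varepsilon > 0$ and $N$ with $\norm{f_n-f_m}_\varphi < \varepsilon$ for $n,m \geqslant N$, so that $\varrho_\varphi((f_n-f_m)/\varepsilon) \leqslant 1$; then Fatou's lemma applied along $m = n_k \to \infty$ gives $\varrho_\varphi((f_n-f)/\varepsilon) \leqslant 1$, whence $\norm{f_n-f}_\varphi \leqslant \varepsilon$. Therefore $f \in L^\varphi(A,\mu)$ and $f_n \to f$ in norm.

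\textbf{Main obstacle.} The only genuinely delicate point is passing limits inside the modular $\varrho_\varphi$, since $\varphi(y,\cdot)$ may take the value $+\infty$ and is only assumed left-continuous. Both the monotone-convergence step that underpins the unit-ball property and $\varrho_\varphi(g)\leqslant 1/2$, and the final Fatou step, rely on the lower semicontinuity of $t \mapsto \varphi(y,t)$, which in turn follows from convexity combined with left-continuity. Once this technicality is properly handled, everything else is routine measure-theoretic bookkeeping.
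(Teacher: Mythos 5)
Your proof is correct: the paper states Proposition \ref{probanach} without proof, recalling it from \cite{diening2011lebesgue}, and your argument is exactly the standard one found there --- verify the Luxemburg norm axioms via convexity and the unit-ball property, then obtain completeness from a rapidly Cauchy subsequence together with monotone convergence/Fatou and the lower semicontinuity of $\varphi(y,\cdot)$ supplied by convexity plus left-continuity. The only cosmetic remark is that definiteness of the norm also uses $\lim_{t\to\infty}\varphi(y,t)=\infty$ (so that $\varrho_\varphi(tf)\leqslant 1$ for all $t>0$ forces $f=0$ $\mu$-a.e.), not the unit-ball property alone.
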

\begin{definition}
  We say that $\varphi \in \Phi(A,\mu)$ satisfies the $\Delta_2$-condition if
there exists $K \geqslant 2$ such that
\[
\varphi(y,2t) \leqslant K\varphi(y,t)
\]
for all $y \in A$ and all $t > 0$. The smallest such $K$ is called the $\Delta_2$-constant
of $\varphi$.

\noindent A $\Phi$-function $\varphi$ is said to be an N-function if it is continuous and positive and satisfies
\[
  \lim_{t \to 0} {\frac{\varphi(t)}{t}}=0 \quad \text{and} \quad \lim_{t \to \infty}{\frac{\varphi(t)}{t}}=\infty.
\]

\noindent A function $\varphi \in \Phi(A,\mu)$ is said to be a generalized N-function if
$\varphi(y,\cdot)$ is an N-function for every $y \in A$, denoted by $\varphi \in N(A,\mu)$.

\noindent A function $\varphi \in N(A,\mu)$ is called uniformly convex if for any
$\varepsilon > 0$ there exists $\delta > 0$ such that
\[
\abs{t_1-t_2} \leqslant \varepsilon \max{\{t_1, t_2\}} \quad \text{or} \quad \varphi\left(y,\frac{t_1+t_2}{2}\right)\leqslant(1-\delta)
\frac{\varphi(y,t_1)+\varphi(y,t_2)}{2}
\]
for all $t_1, t_2 \geqslant 0$ and every $y \in A$.
\end{definition}
\begin{proposition}
  \label{prouniconvex}
  If $\varphi \in N(A,\mu)$ is uniformly convex and satisfies the $\Delta_2$-condition,
   then the norm $\norm{\cdot}_\varphi$ is uniformly convex. Hence, $L^\varphi(A,\mu)$ is also uniformly convex.
\end{proposition}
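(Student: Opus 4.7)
The plan is to prove this in two stages: first establish a modular uniform convexity statement from the pointwise uniform convexity of $\varphi$, then use the $\Delta_2$-condition to transfer the conclusion from the modular $\varrho_\varphi$ to the Luxemburg norm $\|\cdot\|_\varphi$.

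In the first stage, I want to show that for every $\varepsilon>0$ there exists $\eta>0$ such that whenever $f,g\in L^\varphi(A,\mu)$ satisfy $\varrho_\varphi(f),\varrho_\varphi(g)\le 1$ and $\varrho_\varphi\bigl(\tfrac{f-g}{2}\bigr)\ge \varepsilon$, one has $\varrho_\varphi\bigl(\tfrac{f+g}{2}\bigr)\le 1-\eta$. The key device is a splitting of $A$ into a ``good'' set $G=\{y:|f(y)-g(y)|\le \varepsilon'\max(|f(y)|,|g(y)|)\}$ and its complement $B$, where $\varepsilon'$ is chosen in terms of $\varepsilon$ and the $\Delta_2$-constant $K$. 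On $B$, the assumed uniform convexity of $\varphi$ gives, pointwise,
\[
\varphi\!\left(y,\tfrac{|f(y)|+|g(y)|}{2}\right)\le (1-\delta)\,\tfrac{\varphi(y,|f(y)|)+\varphi(y,|g(y)|)}{2},
\]
while on $G$ the convexity of $\varphi$ in the second variable together with the $\Delta_2$-condition yields that $\int_G \varphi(y,|f-g|/2)\,d\mu$ is small compared with $\varepsilon$. Combining both estimates with the inequality $\varphi(y,|\tfrac{f+g}{2}|)\le \varphi(y,\tfrac{|f|+|g|}{2})$ and integrating produces the required $\eta$.

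In the second stage, I would invoke the standard consequences of the $\Delta_2$-condition to pass between modular and norm: (i) if $\|h\|_\varphi=1$ then $\varrho_\varphi(h)=1$, and (ii) for any $\varepsilon>0$ there is $\kappa(\varepsilon)>0$ with $\|h\|_\varphi\ge \varepsilon \Rightarrow \varrho_\varphi(h)\ge \kappa(\varepsilon)$, and conversely $\varrho_\varphi(h)\le 1-\eta \Rightarrow \|h\|_\varphi\le 1-\eta'$ for some $\eta'=\eta'(\eta)>0$. Both assertions follow from $\Delta_2$ by iterating $\varphi(y,2t)\le K\varphi(y,t)$ a fixed number of times. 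Assuming $\|f\|_\varphi=\|g\|_\varphi=1$ and $\|f-g\|_\varphi\ge \varepsilon$, apply (i)--(ii) to obtain $\varrho_\varphi(\tfrac{f-g}{2})\ge \kappa(\varepsilon/2)$, feed this into stage one to get $\varrho_\varphi(\tfrac{f+g}{2})\le 1-\eta$, and conclude $\|\tfrac{f+g}{2}\|_\varphi\le 1-\eta'$. This is exactly uniform convexity of the norm, and since $L^\varphi(A,\mu)$ is a Banach space by Proposition~\ref{probanach}, it is uniformly convex as a normed space.

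The main technical obstacle is the first stage: one must calibrate $\varepsilon'$ delicately so that the contributions from the ``good'' set $G$ (where the pointwise uniform convexity gives nothing) are absorbed into the margin produced on the ``bad'' set $B$, while simultaneously ensuring that the $\Delta_2$-bounds applied to $\varphi(y,|f|+|g|)\le C_K(\varphi(y,|f|)+\varphi(y,|g|))$ do not destroy the gain $\delta$. A clean way to organize this is to first prove a localized lemma on $B$, then control the $G$-part by a crude $\Delta_2$-estimate of the form $\int_G\varphi(y,|f-g|/2)\,d\mu\le \varepsilon'^{\,\alpha}\cdot C_K\int_A(\varphi(y,|f|)+\varphi(y,|g|))\,d\mu$ with a suitable power $\alpha$, and finally choose $\varepsilon'$ so that the $G$-contribution is at most half of the gain on $B$.
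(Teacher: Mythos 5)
The paper itself gives no proof of this proposition: it is recalled verbatim from \cite{diening2011lebesgue} (modular uniform convexity plus the $\Delta_2$-transfer to the Luxemburg norm), so your attempt must be measured against that standard argument — and your two-stage plan is exactly it. Stage two is correct as sketched: under $\Delta_2$ the modular is finite and continuous along rays, so $\norm{h}_\varphi=1$ forces $\varrho_\varphi(h)=1$; iterating $\varphi(y,2t)\le K\varphi(y,t)$ gives $\norm{h}_\varphi\ge\varepsilon\Rightarrow\varrho_\varphi(h)\ge K^{-\lceil\log_2(1/\varepsilon)\rceil}$; and $\varrho_\varphi(h)\le 1-\eta\Rightarrow\norm{h}_\varphi\le 1-\eta'$ follows by writing $\lambda h$ as a convex combination of $h$ and $2h$ and using $\varrho_\varphi(2h)\le K\varrho_\varphi(h)$.

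The one genuine flaw is the pointwise inequality you assert on the bad set $B=\{y:\abs{f(y)-g(y)}>\varepsilon'\max(\abs{f(y)},\abs{g(y)})\}$. Uniform convexity of $\varphi$ is a statement about the pair $t_1=\abs{f(y)}$, $t_2=\abs{g(y)}$, and its first alternative fails only when $\bigl|\abs{f(y)}-\abs{g(y)}\bigr|>\varepsilon'\max(\abs{f(y)},\abs{g(y)})$; membership in $B$ controls $\abs{f-g}$, not $\bigl|\abs f-\abs g\bigr|$. Concretely, if $g(y)=-f(y)\ne 0$ then $y\in B$, yet your displayed inequality becomes $\varphi(y,\abs{f(y)})\le(1-\delta)\varphi(y,\abs{f(y)})$, which is false. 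The repair is the usual sign case analysis, applied to $\varphi\bigl(y,\abs{\tfrac{f+g}{2}}\bigr)$ rather than to $\varphi\bigl(y,\tfrac{\abs f+\abs g}{2}\bigr)$: on $B$, either $\bigl|\abs f-\abs g\bigr|>\varepsilon'\max(\abs f,\abs g)$, and then uniform convexity applied to $(\abs f,\abs g)$ together with $\abs{\tfrac{f+g}{2}}\le\tfrac{\abs f+\abs g}{2}$ gives the $(1-\delta)$-gain; or $f(y)$ and $g(y)$ have opposite signs with $\bigl|\abs f-\abs g\bigr|\le\varepsilon'\max(\abs f,\abs g)$, and then $\abs{\tfrac{f+g}{2}}=\tfrac{\bigl|\abs f-\abs g\bigr|}{2}\le\tfrac{\varepsilon'}{2}\max(\abs f,\abs g)$, so plain convexity ($\varphi(y,\lambda t)\le\lambda\varphi(y,t)$ for $0\le\lambda\le1$) yields the even stronger bound $\varepsilon'\,\tfrac{\varphi(y,\abs f)+\varphi(y,\abs g)}{2}$. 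With this fix, and using $\varphi\bigl(y,\tfrac{\abs{f-g}}{2}\bigr)\le\tfrac{\varphi(y,\abs f)+\varphi(y,\abs g)}{2}$ to see that a definite fraction of the modular mass of $f,g$ lies in $B$ whenever $\varrho_\varphi\bigl(\tfrac{f-g}{2}\bigr)\ge\varepsilon$, your stage one goes through. Note also that stage one needs no $\Delta_2$ at all (convexity alone handles the good set $G$); the $\Delta_2$-condition is only needed for the modular-to-norm transfer in stage two.
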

\begin{definition}
A measure $\mu$ is called separable if there exists a sequence
$\{E_k\} \subset \Sigma$ with the following properties:
\begin{enumerate}[label=(\roman*)]
  \item $\mu(E_k) < \infty$ for all $k \in \mathbb{N}^+$.
  \item  for every $E \in \Sigma$ with $\mu(E) < \infty$ and every $\varepsilon > 0$ there exists an index $k$ such that $\mu(E \triangle E_k) < \varepsilon$, where $\triangle$ denotes the symmetric difference defined through $E \triangle E_k :=  (E \setminus E_k) \cup (E_k \setminus E)$.
\end{enumerate}

A function $\varphi \in \Phi(A,\mu)$ is called locally integrable on $A$ if
  \[
  \varrho_\varphi(t\chi_E) < \infty
  \]
  for all $t > 0$ and all $\mu$-measurable $E \subset A$ with $\mu(E) < \infty$.
\end{definition}
\begin{proposition}
  \label{proequal}
If $\varphi \in \Phi(A,\mu)$ satisfies the $\Delta_2$-condition, then
\[
\begin{aligned}
      L^\varphi(A,\mu)
    & =E^\varphi(A,\mu):= \left\{f \in L^\varphi(A,\mu) : \varrho_\varphi(\lambda f) < \infty ~\text{for all}~ \lambda > 0\right\}\\
     & =L_{OC}^\varphi(A,\mu):=\left\{f \in L^\varphi(A,\mu) : \rho_\varphi(f) < \infty \right\}.
  \end{aligned}
 \]
\end{proposition}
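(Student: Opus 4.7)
The plan is to establish the chain of inclusions
\[
L^\varphi(A,\mu) \subseteq E^\varphi(A,\mu) \subseteq L_{OC}^\varphi(A,\mu) \subseteq L^\varphi(A,\mu),
\]
which forces equality throughout. Two of these inclusions are immediate from the definitions: $L_{OC}^\varphi(A,\mu) \subseteq L^\varphi(A,\mu)$ since the condition $\varrho_\varphi(f) < \infty$ is exactly $\varrho_\varphi(\lambda f) < \infty$ for the choice $\lambda = 1$, and $E^\varphi(A,\mu) \subseteq L_{OC}^\varphi(A,\mu)$ again by taking $\lambda = 1$ in the defining condition of $E^\varphi$. Thus the whole content of the proposition reduces to the non-trivial inclusion $L^\varphi(A,\mu) \subseteq E^\varphi(A,\mu)$, and this is exactly where the $\Delta_2$-condition enters.

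For this main step, fix $f \in L^\varphi(A,\mu)$, so by definition there is some $\lambda_0 > 0$ with $\varrho_\varphi(\lambda_0 f) < \infty$. Given an arbitrary $\lambda > 0$, I would choose a non-negative integer $n$ with $\lambda \leqslant 2^n \lambda_0$. Using that each $\varphi(y,\cdot)$ is non-decreasing (a consequence of being convex, left-continuous, non-negative, with $\varphi(y,0)=0$), together with the iterated $\Delta_2$-inequality
\[
\varphi(y, 2^n t) \leqslant K^n \varphi(y, t) \quad \text{for all } y \in A, \; t \geqslant 0,
\]
obtained by a trivial induction on $n$, one gets the pointwise bound
\[
\varphi(y, \lambda \abs{f(y)}) \leqslant \varphi(y, 2^n \lambda_0 \abs{f(y)}) \leqslant K^n \varphi(y, \lambda_0 \abs{f(y)}).
\]
Integrating over $A$ yields $\varrho_\varphi(\lambda f) \leqslant K^n \varrho_\varphi(\lambda_0 f) < \infty$. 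Since $\lambda > 0$ was arbitrary, $f \in E^\varphi(A,\mu)$, and the chain of inclusions collapses to equalities.

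The only real obstacle is bookkeeping around the $\Delta_2$ iteration, which is completely mechanical once the single-step inequality $\varphi(y,2t) \leqslant K\varphi(y,t)$ is applied $n$ times. A small point worth verifying explicitly is the monotonicity of $\varphi(y,\cdot)$ used to pass from $\lambda \abs{f(y)}$ to $2^n \lambda_0 \abs{f(y)}$; this follows because a left-continuous convex function on $[0,\infty)$ that vanishes at $0$ and takes values in $[0,\infty]$ is necessarily non-decreasing. All measurability questions needed for the integration step are guaranteed by item (ii) of Definition \ref{defspace}, since $y \mapsto \varphi(y, \abs{g(y)})$ is measurable whenever $g$ is. No further machinery is required.
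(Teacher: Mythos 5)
Your proof is correct: the two trivial inclusions follow from the definitions, and your $\Delta_2$-iteration $\varphi(y,2^n t)\leqslant K^n\varphi(y,t)$ combined with the monotonicity of $\varphi(y,\cdot)$ (which you rightly justify from convexity and $\varphi(y,0)=0$) gives the only substantive inclusion $L^\varphi(A,\mu)\subseteq E^\varphi(A,\mu)$. The paper itself does not prove this proposition but simply recalls it from \cite{diening2011lebesgue}, and your argument is exactly the standard one given there, so there is nothing to add.
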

\begin{proposition}
  \label{proseparable}
  Let $\varphi \in \Phi(A,\mu)$ be locally integrable and let $\mu$ be separable, then
  $E^\varphi(A,\mu)$ is separable. If we further assume
  that $\varphi \in \Phi(A,\mu)$ satisfies the $\Delta_2$-condition, then
  $L^\varphi(A,\mu)$ is separable by Proposition \ref{proequal}.
\end{proposition}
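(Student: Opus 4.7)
The plan is to construct a countable dense subset of $E^{\varphi}(A,\mu)$ consisting of finite linear combinations, with rational coefficients, of characteristic functions of sets built from the countable family $\{E_k\}$ furnished by separability of $\mu$. This mirrors the classical proof for $L^p$, but the modular structure requires care because the $L^{\varphi}$-norm of $\chi_G$ is not controlled by $\mu(G)$ alone.

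My first step would be to show that simple functions of the form $s=\sum_{i=1}^{n} c_i\chi_{F_i}$ with $c_i\in\mathbb{R}$ and $\mu(F_i)<\infty$ are norm-dense in $E^{\varphi}(A,\mu)$. Given $f\in E^{\varphi}$, use $\sigma$-finiteness to produce truncations $s_n$ with $|s_n|\leqslant|f|$, each supported on a set of finite measure, and $s_n\to f$ pointwise. For any fixed $\eta>0$, the integrand $\varphi(y,|f(y)-s_n(y)|/\eta)$ tends to $0$ pointwise and is dominated by $\varphi(y,2|f(y)|/\eta)$, which lies in $L^1$ precisely because $f\in E^{\varphi}$ forces $\varrho_{\varphi}(\lambda f)<\infty$ for every $\lambda>0$. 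Dominated convergence then gives $\varrho_{\varphi}((f-s_n)/\eta)\to 0$, so $\|f-s_n\|_{\varphi}\to 0$.

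The main obstacle is the next step: approximating a single $\chi_F$ (with $\mu(F)<\infty$) in norm by $\chi_G$ for $G$ in a countable family. We need $\int_{F\triangle G}\varphi(y,1/\eta)\,d\mu\leqslant 1$, which requires controlling $\int_{G\setminus F}\varphi(y,1/\eta)\,d\mu$; separability of $\mu$ alone only yields $\mu(F\triangle E_k)$ small, and absolute continuity of $\int_{E_k}\varphi(y,1/\eta)\,d\mu$ cannot be made uniform in $k$. To bypass this, fix a $\sigma$-finite exhaustion $A=\bigcup_m B_m$ with $\mu(B_m)<\infty$. First reduce to approximating $\chi_{F\cap B_m}$ for large $m$ (again by dominated convergence on the modular, since $\chi_F\in E^{\varphi}$). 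Then work inside the single finite-measure set $B_m$: by local integrability of $\varphi$ we have $\int_{B_m}\varphi(y,1/\eta)\,d\mu<\infty$, so classical absolute continuity of this one integral guarantees that $\int_{(F\cap B_m)\triangle(E_k\cap B_m)}\varphi(y,1/\eta)\,d\mu<1$ whenever $\mu(F\triangle E_k)$ is chosen small enough. Thus $\chi_{F\cap B_m}$ is approximated in norm by $\chi_{E_k\cap B_m}$ drawn from a countable family.

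Combining the two approximations and exploiting continuity of scalar multiplication in $L^{\varphi}$ to replace each $c_i$ by a rational, one obtains a countable set of the form $\bigl\{\sum_i r_i\chi_{E_{k_i}\cap B_{m_i}}:\ r_i\in\mathbb{Q},\ k_i,m_i\in\mathbb{N}\bigr\}$ that is dense in $E^{\varphi}(A,\mu)$, establishing separability. For the final assertion, the $\Delta_2$-condition together with Proposition \ref{proequal} yields $L^{\varphi}(A,\mu)=E^{\varphi}(A,\mu)$, so separability transfers immediately.
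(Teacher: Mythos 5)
Your argument is correct, and it is essentially the standard proof: reduce to simple functions with finite-measure support (dense in $E^\varphi$ by dominated convergence on the modular, using that $\varrho_\varphi(\lambda f)<\infty$ for all $\lambda>0$), then approximate characteristic functions by members of the countable family $\{\chi_{E_k\cap B_m}\}$, where your exhaustion trick correctly handles the only delicate point, namely that absolute continuity of $S\mapsto\int_S\varphi(y,1/\eta)\,d\mu$ is available only on sets of finite measure via local integrability. Note that the paper does not prove this proposition at all: it is quoted from the literature (Diening--Harjulehto--H\"ast\"o--Ruzicka \cite{diening2011lebesgue}), and your write-up in effect reconstructs that textbook proof, including the final passage to $L^\varphi$ via the $\Delta_2$-condition and Proposition \ref{proequal}.
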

\subsection{The spaces \texorpdfstring{$\ell^{\mathcal{H}_V}(V)$}{Lg} and \texorpdfstring{$\ell^{\mathcal{H}_E}(E)$}{Lg}}
\label{subsectionHVE}
\quad \\
Let $\mathcal{H}_V \colon V \times [0,\infty) \to [0,\infty)$ be defined by
\[
\mathcal{H}_V(x,t):= t^p+a(x)t^q
\]
for all $x \in V$ and $t \geqslant 0$, with $1<p<q<\infty$ and $0 \leqslant a(\cdot) \in C(V)$.

Let $\mathcal{H}_E \colon E \times [0,\infty) \to [0,\infty)$ be defined by
\[
\mathcal{H}_E((x,y),t):= t^p+a(x)t^q
\]
for all $xy \in E$ and $t \geqslant 0$.
Obviously, $\mathcal{H}_V \in N(V,\mu)$ and $\mathcal{H}_E \in N(E,w)$ satisfy $\Delta_2$-condition.

As in Definition \ref{defspace}, we define the Musielak-Orlicz spaces
\[
  \ell^{\mathcal{H}_V}(V):= L^{\mathcal{H}_V}(V,\mu) =\left\{u \colon V \to \mathbb{R} : \rho_{\mathcal{H}_V}(u) < \infty \right\}
\]
and
\[
  \ell^{\mathcal{H}_E}(E):= L^{\mathcal{H}_E}(E,w) =\left\{f \colon E \to \mathbb{R} : \rho_{\mathcal{H}_E}(f) < \infty \right\}
\]
equipped with the norm
\[
  \norm{u}_{\ell^{\mathcal{H}_V}(V)}:=\mathrm{inf}\left\{\lambda>0:
  \rho_{\mathcal{H}_V}(\frac{u}{\lambda})\leqslant 1\right\}
\quad \text{and} \quad
  \norm{f}_{\ell^{\mathcal{H}_E}(E)}:=\mathrm{inf}\left\{\lambda>0:
  \rho_{\mathcal{H}_E}(\frac{f}{\lambda})\leqslant 1\right\}
\]
respectively,
where
\[
  \rho_{\mathcal{H}_V}(u)=\int_V{\left(\abs{u}^p+a(x)\abs{u}^q\right)}\, d\mu
  \quad \text{and} \quad
  \rho_{\mathcal{H}_E}(f)=\int_E{\left(\abs{f}^p+a(x)\abs{f}^q\right)}\, dw
\]
are called $\mathcal{H}_V$-modular and $\mathcal{H}_E$-modular.

Moreover, we have the following two elementary properties of the spaces and modulars.
Their proofs parallel those of the Euclidean case and one can refer to \cite[Proposition 2.14]{colasuonno2016eigenvalues} and
\cite[Proposition 2.1]{liu2018existence}. Here, we present their details for completeness of this paper.
\begin{proposition}
\label{proplve}
  Spaces $\ell^{\mathcal{H}_V}(V)$ and $\ell^{\mathcal{H}_E}(E)$ are uniformly convex,
  and so reflexive, Banach spaces. If $a$ is integrable on all $V_1 \subset V$ with $\mu(V_1) < \infty$, then  $\ell^{\mathcal{H}_V}(V)$ is separable. If $a(x)$ is integrable on all $E_1 \subset E$ with $w(E_1) < \infty$, then $\ell^{\mathcal{H}_E}(E)$ is separable.
\end{proposition}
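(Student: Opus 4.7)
The plan is to deduce everything from the abstract Musielak-Orlicz results in Propositions \ref{probanach}, \ref{prouniconvex}, and \ref{proseparable}, applied to the generalized N-functions $\mathcal{H}_V \in N(V,\mu)$ and $\mathcal{H}_E \in N(E,w)$, both of which already satisfy the $\Delta_2$-condition as noted just before the statement. The Banach property is then immediate from Proposition \ref{probanach}. The work remaining is to (i) establish uniform convexity so that Proposition \ref{prouniconvex} applies, and (ii) verify the hypotheses of Proposition \ref{proseparable}.

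For uniform convexity of $\mathcal{H}_V$, I would use that both $t \mapsto t^p$ and $t \mapsto t^q$ are uniformly convex N-functions for $p,q > 1$: for each $\varepsilon > 0$ there exist $\delta_p, \delta_q > 0$ such that, for all $t_1, t_2 \geqslant 0$, either $|t_1 - t_2| \leqslant \varepsilon \max\{t_1, t_2\}$ or $\left(\tfrac{t_1+t_2}{2}\right)^s \leqslant (1-\delta_s)\tfrac{t_1^s+t_2^s}{2}$ for $s \in \{p,q\}$. Setting $\delta := \min\{\delta_p, \delta_q\}$ and adding these two pointwise inequalities with the non-negative weight $a(x)$ yields
\[
\mathcal{H}_V\!\left(x, \tfrac{t_1+t_2}{2}\right) \leqslant (1-\delta)\,\tfrac{\mathcal{H}_V(x,t_1)+\mathcal{H}_V(x,t_2)}{2},
\]
with $\delta$ independent of $x \in V$. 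The identical argument applies to $\mathcal{H}_E$ on $E$. Combined with the $\Delta_2$-condition, Proposition \ref{prouniconvex} delivers uniform convexity, and hence reflexivity, of both $\ell^{\mathcal{H}_V}(V)$ and $\ell^{\mathcal{H}_E}(E)$.

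For separability I would verify the two hypotheses of Proposition \ref{proseparable}. Local integrability of $\mathcal{H}_V$ reduces, when $\mu(V_1) < \infty$, to
\[
\rho_{\mathcal{H}_V}(t\chi_{V_1}) = t^p \mu(V_1) + t^q \int_{V_1} a\, d\mu < \infty,
\]
which is exactly the stated integrability assumption on $a$. Separability of $\mu$ follows from $V$ being countable (as $G$ is connected and locally finite): enumerating the finite subsets of $V$ produces a sequence $\{E_k\}$ with $\mu(E_k) < \infty$, and any $E$ with $\mu(E) = \sum_{x \in E}\mu(x) < \infty$ can be approximated in symmetric difference by its finite truncations. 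Proposition \ref{proseparable} then gives separability of $E^{\mathcal{H}_V}(V,\mu)$, which by $\Delta_2$ coincides with $\ell^{\mathcal{H}_V}(V)$ via Proposition \ref{proequal}. The argument for $\ell^{\mathcal{H}_E}(E)$ is completely analogous, using countability of $E$ and the integrability hypothesis on $a$ with respect to $w$.

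The one genuine subtlety is the uniform-in-$x$ choice of the convexity constant $\delta$ in the uniform convexity step; this is exactly what the non-negativity of $a$ and the uniform convexity of the pure powers $t^p, t^q$ permit through the $\min$-trick. Everything else is a direct check of the hypotheses of the three abstract propositions cited, so no additional estimates beyond the ones above should be needed.
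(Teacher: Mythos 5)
Your proposal is correct and follows essentially the same route as the paper: Banach from Proposition \ref{probanach}, uniform convexity of $\mathcal{H}_V$ and $\mathcal{H}_E$ via the uniform convexity of $t^p$ and $t^q$ with the $\min\{\delta_p,\delta_q\}$ trick and nonnegativity of $a$, then Proposition \ref{prouniconvex}, and separability by checking local integrability and separability of the measure so that Propositions \ref{proseparable} and \ref{proequal} apply. Your write-up is in fact slightly more explicit than the paper's on why $\mu$ (resp.\ $w$) is separable and why local integrability reduces to the stated integrability of $a$, but these are details the paper treats as clear, not a different argument.
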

\begin{proof}
By Proposition \ref{probanach}, we know that $\ell^{\mathcal{H}_V}(V)$ and $\ell^{\mathcal{H}_E}(E)$ are Banach spaces.

Next, we show that $\ell^{\mathcal{H}_V}(V)$ and $\ell^{\mathcal{H}_E}(E)$ are uniformly convex. In view of Proposition \ref{prouniconvex}, it suffices to  show that $\mathcal{H}_V$ and $\mathcal{H}_E$ are uniformly convex. By \cite[Remark 2.4.6]{diening2011lebesgue}, $t^p$ and $t^q$ are uniformly convex.
  Therefore, let $\varepsilon > 0$ and $t_1,t_2 \geqslant 0$ be such that $\abs{t_1-t_2} > \varepsilon \max\{t_1,t_2\}$, there
  exist $\delta_p(\varepsilon), \delta_q(\varepsilon)$ such that
  \[
    \left(\frac{t_1+t_2}{2}\right)^p \leqslant 1-\delta_p(\varepsilon) \frac{t_1^p+t_2^p}{2}
    \quad \text{and} \quad
    \left(\frac{t_1+t_2}{2}\right)^q \leqslant 1-\delta_p(\varepsilon) \frac{t_1^q+t_2^q}{2},
  \]
thus
  \[
    \left(\frac{t_1+t_2}{2}\right)^p+a(x)\left(\frac{t_1+t_2}{2}\right)^q
     \leqslant (1-\min\{\delta_p(\varepsilon),\delta_q(\varepsilon)\})        \frac{t_1^p+a(x)t_1^q+t_2^p+a(x)t_2^q}{2}
   \]
  This concludes the uniform convexity of $\mathcal{H}_V$ and $\mathcal{H}_E$.

    Now, suppose that $a$ is integrable on all $V_1 \subset V$ with $\mu(V_1) < \infty$. It is clear that $\mu$ is separable, and $\mathcal{H}_V$ is locally integrable and satisfies $\Delta_2$-condition. Thus, by Proposition \ref{proseparable}, we know that $\ell^{\mathcal{H}_V}(V)$ is separable. Through the similar arguments, we also can show that $\ell^{\mathcal{H}_E}(E)$ is separable, if $a$ is integrable on all $E_1 \subset E$ with $w(E_1) < \infty$.
\end{proof}

\begin{proposition}
\label{promodular}
The $\mathcal{H}_V$-modular has the following properties
\begin{enumerate}[label=(\roman*)]
  \item for $u\neq0$, $\norm{u}_{\mathcal{H}_V}=a \Leftrightarrow \varrho_{\mathcal{H}_V}(\frac{u}{a})=1$;
  \item $\norm{u}_{\mathcal{H}_V}<1$(resp. $=1;>1$) $\Leftrightarrow \varrho_{\mathcal{H}_V}(u)<1$ (resp. $=1;>1$);
  \item $\norm{u}_{\mathcal{H}_V}<1 \Rightarrow \norm{u}_{\mathcal{H}_V}^q \leqslant \varrho_{\mathcal{H}_V}(u) \leqslant \norm{u}_{\mathcal{H}_V}^p$;
        $\norm{u}_{\mathcal{H}_V}>1 \Rightarrow \norm{u}_{\mathcal{H}_V}^p \leqslant \varrho_{\mathcal{H}_V}(u) \leqslant \norm{u}_{\mathcal{H}_V}^q$;
  \item $\norm{u}_{\mathcal{H}_V} \to 0 \Leftrightarrow \varrho_{\mathcal{H}_V}(u) \to 0$;
        $\norm{u}_{\mathcal{H}_V} \to \infty \Leftrightarrow \varrho_{\mathcal{H}_V}(u) \to \infty$.
\end{enumerate}
For $\mathcal{H}_E$-modular, we have similar properties.
\end{proposition}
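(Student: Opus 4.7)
The plan is to base everything on a single scaling identity for the modular. For any $\lambda > 0$ and $u \in \ell^{\mathcal{H}_V}(V)$,
\[
\varrho_{\mathcal{H}_V}(\lambda u) \;=\; \lambda^p \int_V |u|^p \, d\mu + \lambda^q \int_V a(x)|u|^q \, d\mu,
\]
and since $1 < p < q$, comparing the factors $\lambda^p$ and $\lambda^q$ yields the two-sided bound
\[
\min\{\lambda^p,\lambda^q\}\, \varrho_{\mathcal{H}_V}(u) \;\leqslant\; \varrho_{\mathcal{H}_V}(\lambda u) \;\leqslant\; \max\{\lambda^p,\lambda^q\}\, \varrho_{\mathcal{H}_V}(u).
\]
This one inequality, together with the continuity and strict monotonicity of $\lambda \mapsto \varrho_{\mathcal{H}_V}(\lambda u)$ on $(0,\infty)$ for $u \neq 0$, will drive the whole argument. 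Continuity is a dominated-convergence statement, made legitimate by the $\Delta_2$-condition for $\mathcal{H}_V$, which via Proposition \ref{proequal} guarantees $\varrho_{\mathcal{H}_V}(\lambda u) < \infty$ for every $\lambda > 0$; strict monotonicity uses $u \not\equiv 0$, so that at least one of $\int_V |u|^p\, d\mu$ and $\int_V a|u|^q\, d\mu$ is strictly positive.

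I would then prove (i) by setting $\psi(\lambda) := \varrho_{\mathcal{H}_V}(u/\lambda)$ for $u \neq 0$. The scaling bound, applied to $u/\lambda$, forces $\psi(\lambda) \to +\infty$ as $\lambda \to 0^+$ and $\psi(\lambda) \to 0$ as $\lambda \to +\infty$; combined with continuity and strict monotonicity, the intermediate value theorem produces a unique $\lambda_0 > 0$ with $\psi(\lambda_0) = 1$, and the infimum defining the Luxemburg norm then gives $\|u\|_{\mathcal{H}_V} = \lambda_0$. This is exactly (i).

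I would next derive (iii) from (i) by putting $a_0 := \|u\|_{\mathcal{H}_V}$, so that $\varrho_{\mathcal{H}_V}(u/a_0) = 1$, and applying the scaling bound with $\lambda = a_0$ to the function $u/a_0$: if $a_0 < 1$ we read off $a_0^q \leqslant \varrho_{\mathcal{H}_V}(u) \leqslant a_0^p$, and if $a_0 > 1$ we read off $a_0^p \leqslant \varrho_{\mathcal{H}_V}(u) \leqslant a_0^q$, which is (iii). Property (ii) follows immediately: $\|u\|_{\mathcal{H}_V} < 1$ implies $\varrho_{\mathcal{H}_V}(u) \leqslant \|u\|_{\mathcal{H}_V}^p < 1$, $\|u\|_{\mathcal{H}_V} > 1$ implies $\varrho_{\mathcal{H}_V}(u) \geqslant \|u\|_{\mathcal{H}_V}^p > 1$, and the equality case is (i) with $a = 1$; the converses are contrapositives. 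For (iv), the inequalities in (iii) sandwich $\varrho_{\mathcal{H}_V}(u)$ between $\|u\|_{\mathcal{H}_V}^p$ and $\|u\|_{\mathcal{H}_V}^q$ in a direction depending on whether the norm is below or above $1$, so $\|u\|_{\mathcal{H}_V}\to 0$ (resp.\ $\to \infty$) forces $\varrho_{\mathcal{H}_V}(u) \to 0$ (resp.\ $\to \infty$); the reverse implications follow by inverting the same bounds, e.g.\ $\|u\|_{\mathcal{H}_V} \leqslant \varrho_{\mathcal{H}_V}(u)^{1/q}$ once $\varrho_{\mathcal{H}_V}(u) < 1$.

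The main delicacy is in step (i): one must know that the infimum in the Luxemburg norm is actually attained, i.e.\ that the value $1$ lies in the range of $\psi$. This is precisely where the $\Delta_2$-condition is essential, since without it $\psi$ could jump from finite values to $+\infty$ and the equivalence in (i) would fail. Everything else is routine manipulation of the scaling identity, and the same argument applies verbatim to the $\mathcal{H}_E$-modular on $\ell^{\mathcal{H}_E}(E)$ with $V$ and $d\mu$ replaced by $E$ and $dw$.
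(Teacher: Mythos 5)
Your proposal is correct and follows essentially the same route as the paper: both rest on the scaling bounds $b^{p}\varrho_{\mathcal{H}_V}(u)\leqslant\varrho_{\mathcal{H}_V}(bu)\leqslant b^{q}\varrho_{\mathcal{H}_V}(u)$ for $b>1$ (reversed for $0<b<1$) together with the continuity and strict monotonicity of $\lambda\mapsto\varrho_{\mathcal{H}_V}(\lambda u)$, from which (i) is obtained and then (ii)--(iv) are read off. Your extra appeals to dominated convergence and the $\Delta_2$-condition are harmless but not really needed here, since the explicit identity $\varrho_{\mathcal{H}_V}(\lambda u)=\lambda^{p}\int_V|u|^{p}\,d\mu+\lambda^{q}\int_V a|u|^{q}\,d\mu$ already gives finiteness for all $\lambda$, continuity, and strict monotonicity directly.
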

\begin{proof}
For any $u \in \ell^{\mathcal{H}_V}(V)$, it is easy to check that $ \rho_{\mathcal{H}_V}(au)$ is a continuous convex even function in $a$, and strictly increases on $a \in[0,+\infty)$. Thus, by the definition of $\rho_{\mathcal{H}_V}$, we have that
$$
\|u\|_{\mathcal{H}_V}=a \Leftrightarrow \rho_{\mathcal{H}_V}\left(\frac{u}{a}\right)=1,
$$
that is, (i) follows. Therefore, (ii) also follows. Noting that, for any $u \in \ell^{\mathcal{H}_V}(V)$, we have that
\begin{equation}\label{eqmodularpro}
    \begin{aligned}
& b^p \rho_{\mathcal{H}_V}(u) \leqslant \rho_{\mathcal{H}_V}(b u) \leqslant b^q \rho_{\mathcal{H}_V}(u) \text { if } b>1, \\
& b^q \rho_{\mathcal{H}_V}(u) \leqslant \rho_{\mathcal{H}_V}(b u) \leqslant b^p \rho_{\mathcal{H}_V}(u) \text { if } 0<b<1 .
\end{aligned}
\end{equation}
Let $\|u\|_{\mathcal{H}_V}=a$ with $0<a<1$, by (i), we have $\rho_{\mathcal{H}_V}(\frac{u}{a})=1$. Noting that $\frac{1}{a}>1$, by \eqref{eqmodularpro}, we have that
$$
\frac{\rho_{\mathcal{H}_V}(u)}{a^p} \leqslant \rho_{\mathcal{H}_V}\left(\frac{u}{a}\right)=1 \leqslant \frac{\rho_{\mathcal{H}_V}(u)}{a^q},
$$
thus the first part of (iii) follows, and the second part of (iii) is similar. From (iii), it is easy to obtain (iv).
The same argument also works for $\mathcal{H}_E$-modular.
\end{proof}

\subsection{The spaces \texorpdfstring{$D^{1,\mathcal{H}}(V)$}{Lg} and double phase operator}  \quad \\
Let $V$ be a infinite set. In order to look for the solutions of problem \eqref{eq1}, we shall
first introduce work space $D^{1,\mathcal{H}}(V)$ and prove some properties of double phase operator on locally finite graphs.

Let $C_0(V)$ denotes the set of all real functions with finite support,
$D^{1,\alpha}(V)$ denotes the completion of $C_0(V)$ under the norm
\[
\norm{u}_{D^{1,\alpha}(V)} := \norm{\nabla u}_{\ell^\alpha(E)}
\]
for $1 \leqslant \alpha \leqslant \infty$, and $D^{1,\mathcal{H}}(V)$ denotes the completion of $C_0(V)$ under the norm
\[
  \norm{u}_{D^{1,\mathcal{H}}(V)}:=\norm{\nabla u}_{\ell^{\mathcal{H}_E}(E)}.
\]

From now on, we abbreviate
$\norm{u}_{D^{1,\mathcal{H}}(V)}$ to $\norm{u}$, $\varrho_{\mathcal{H}_E}\left(\nabla u\right)$ to $\varrho\left(\nabla u\right)$, $\norm{u}_{D^{1,\alpha}(V)}$ to $\norm{\nabla u}_\alpha$, and
$\norm{u}_{\ell^\alpha(V)}$ to $\norm{u}_\alpha$.

\begin{proposition}
  \label{prospaceD1V}
  $D^{1,\mathcal{H}}(V)$ is separable, uniformly convex,
  and so reflexive, Banach space.
\end{proposition}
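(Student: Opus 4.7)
The plan is to realize $D^{1,\mathcal{H}}(V)$ isometrically as a closed subspace of $\ell^{\mathcal{H}_E}(E)$ and then inherit every structural property in sight from Proposition \ref{proplve}. By definition, the norm on $D^{1,\mathcal{H}}(V)$ is $\|u\|=\|\nabla u\|_{\ell^{\mathcal{H}_E}(E)}$, so the difference operator $\nabla \colon C_0(V) \to \ell^{\mathcal{H}_E}(E)$ is tautologically an isometry. Before invoking completions, I would briefly check that this really is a norm (not just a seminorm) on $C_0(V)$: if $u\in C_0(V)$ has $\nabla u\equiv 0$, then $u$ is constant on the connected component of any vertex, and since $V$ is infinite and connected while $u$ has finite support, $u\equiv 0$. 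Thus $\nabla$ is injective on $C_0(V)$.

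Next, I would extend $\nabla$ to the abstract completion $D^{1,\mathcal{H}}(V)$ in the standard way: given a Cauchy sequence $\{u_n\}\subset C_0(V)$ with respect to $\|\cdot\|$, the sequence $\{\nabla u_n\}$ is Cauchy in the Banach space $\ell^{\mathcal{H}_E}(E)$ (by Proposition \ref{proplve}) and therefore converges to some $F\in \ell^{\mathcal{H}_E}(E)$; assigning the class $[u_n]\mapsto F$ defines a linear isometry $\overline{\nabla}\colon D^{1,\mathcal{H}}(V)\to \ell^{\mathcal{H}_E}(E)$. Being isometric, $\overline{\nabla}$ is injective, and its image $\overline{\nabla}(D^{1,\mathcal{H}}(V))$ coincides with the closure in $\ell^{\mathcal{H}_E}(E)$ of the linear subspace $\nabla(C_0(V))$; in particular it is a closed subspace.

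Now I would harvest the conclusions. First, $D^{1,\mathcal{H}}(V)$ is Banach since it is by construction a complete normed space. Second, the closed subspace $\overline{\nabla}(D^{1,\mathcal{H}}(V))$ of the uniformly convex Banach space $\ell^{\mathcal{H}_E}(E)$ is itself uniformly convex (uniform convexity is inherited by subspaces, as it is a condition on norms of pairs of points), and via the isometry $\overline{\nabla}$ this transfers back to $D^{1,\mathcal{H}}(V)$. Third, by the Milman--Pettis theorem, uniform convexity implies reflexivity. Fourth, separability is likewise inherited by subspaces of a separable metric space, so once we know $\ell^{\mathcal{H}_E}(E)$ is separable (Proposition \ref{proplve}, using that $a$ is integrable on finite-$w$-measure subsets of $E$, which is automatic in the settings considered here), so is its closed subspace, and hence $D^{1,\mathcal{H}}(V)$.

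The only delicate step is really the first: one must be sure that the isometric identification with a subspace of $\ell^{\mathcal{H}_E}(E)$ is legitimate, i.e.\ that the completion of $C_0(V)$ produces no pathological elements. The injectivity argument above handles this at the level of $C_0(V)$, and the fact that $\nabla$ extends to a bijection between $D^{1,\mathcal{H}}(V)$ and a closed subspace of $\ell^{\mathcal{H}_E}(E)$ is the standard universal property of completions applied to a uniformly continuous isometry into a complete space. Once that identification is in place, everything else is automatic from Proposition \ref{proplve}.
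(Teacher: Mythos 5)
Your treatment of the Banach, uniform convexity and reflexivity claims is essentially the paper's own argument: identify $D^{1,\mathcal{H}}(V)$ isometrically (via $\nabla$, well-defined on the completion) with a closed subspace of $\ell^{\mathcal{H}_E}(E)$, inherit uniform convexity from Proposition \ref{proplve}, and invoke Milman--Pettis; the check that $\nabla$ is injective on $C_0(V)$ (constancy on the connected, infinite graph versus finite support) is a worthwhile detail the paper leaves implicit. This part is fine.

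The gap is in the separability step. You deduce separability of $D^{1,\mathcal{H}}(V)$ from separability of $\ell^{\mathcal{H}_E}(E)$, but Proposition \ref{proplve} establishes the latter only under the hypothesis that $a$ is integrable over every $E_1 \subset E$ with $w(E_1) < \infty$, and your parenthetical claim that this is ``automatic in the settings considered here'' is not correct at the level of generality at which Proposition \ref{prospaceD1V} is stated: the subsection works on an arbitrary connected locally finite graph with arbitrary $\mu$, $w$ and arbitrary unbounded $0 \leqslant a \in C(V)$, and a set of edges of finite $w$-measure may be infinite (when the weights $w_{xy}$ are summable along infinitely many edges), in which case $\int_{E_1} a \, dw$ can diverge. (It is automatic on $\mathbb{Z}^N$, where $w \equiv 1$ forces finite-measure edge sets to be finite, but the proposition is not restricted to that case.) The paper sidesteps this entirely: since $C_0(V)$ is by construction dense in $D^{1,\mathcal{H}}(V)$, the countable set $\{u \in C_0(V) : u(x) \in \mathbb{Q} \text{ for all } x\}$ is dense --- approximating a finitely supported $u$ by rational values involves only the finitely many edges meeting its support, so the modular, and hence the norm, of the difference tends to zero --- and this yields separability with no hypothesis on $a$ whatsoever. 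To make your proof complete as stated, replace the inheritance-from-$\ell^{\mathcal{H}_E}(E)$ step by this direct countable-dense-set argument (or else add the local integrability hypothesis on $a$, which would weaken the proposition).
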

\begin{proof}
   It is clear that $\nabla \colon D^{1,\mathcal{H}}(V) \to \ell^{\mathcal{H}}(E)$ is an isometry. Thus, Proposition \ref{proplve} and \cite[Theorem 1.22]{sobolev} imply that $D^{1,\mathcal{H}}(V)$ is a uniformly convex, and so reflexive, Banach space. Since $C_0(V)$ is dense in $D^{1,\mathcal{H}}(V)$, we know that $\left\{u \in C_0(V) : u(x) \in \mathbb{Q} \text{ for all } x \in V\right\}$ is dense in $D^{1,\mathcal{H}}(V)$. Thus, $D^{1,\mathcal{H}}(V)$ is separable.
\end{proof}
Define
\begin{equation}
\begin{aligned}
\label{eqI}
I \colon D^{1,\mathcal{H}}(\mathbb{Z}^N) &\to \mathbb{R}\\
 u &\to \int_E{\left(\frac{1}{p}\abs{\nabla u}^p + \frac{a(x)}{q}\abs{\nabla u}^q\right)}\, dw,
\end{aligned}
\end{equation}
It is clear that $I \in C^1(D^{1,\mathcal{H}}(V),\mathbb{R})$ and double phase operator $L$, defined by \eqref{eqdoublephase}, is the derivative operator of $I$
  in the weak sense, that is, $L = I' \colon  D^{1,\mathcal{H}}(V) \to \left(D^{1,\mathcal{H}}(V)\right)^*$, i.e.,
  \[
  \left \langle L(u),v \right \rangle :=
  \int_E{\abs{\nabla u}^{p-2}{\nabla u}{\nabla v}}\, dw
   + \int_E{a(x)\abs{\nabla u}^{q-2}{\nabla u}{\nabla v}}\, dw
\]
for all $u,v \in D^{1,\mathcal{H}}(V)$. Here $\left(D^{1,\mathcal{H}}(V)\right)^*$ denotes the dual space of $D^{1,\mathcal{H}}(V)$ and $\left \langle \cdot,\cdot \right \rangle$ denotes the pairing between $D^{1,\mathcal{H}}(V)$ and $\left(D^{1,\mathcal{H}}(V)\right)^*$.

By adapting the arguments of \cite[Theorem 3.1]{Fan2} and \cite[Proposition 3.1]{liu2018existence}, we have the following elementary properties of double phase operator $L$. From now on, for simplicity, we write by $C$ the generic positive constants (the exact value may be different from line to line and
even different in the same line).
\begin{proposition}
  The double phase operator $L$ has the following properties.
  \begin{enumerate}[label=(\roman*)]
    \item $L$ is a continuous, bounded and strictly monotone operator;
    \item $L$ is a mapping of type $(S)_+$, i.e., if $u_n \rightharpoonup u$ and
    $\varlimsup\limits_{n \to \infty}{\left \langle L(u_n)-L(u),u_n-u \right \rangle} \leqslant 0$,
    then $u_n \to u$ in $D^{1,\mathcal{H}(V)}$;
    \item $L$ is a homeomorphism.
  \end{enumerate}
\end{proposition}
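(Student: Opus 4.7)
I would handle the three items in order, ultimately applying the Browder--Minty theory of monotone operators on the reflexive Banach space $D^{1,\mathcal{H}}(V)$ (Proposition \ref{prospaceD1V}), along the lines of \cite{Fan2} and \cite{liu2018existence}.

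For (i), continuity is immediate from $I\in C^1(D^{1,\mathcal{H}}(V),\mathbb{R})$. Boundedness follows by writing
\[
\langle L(u),v\rangle=\int_E|\nabla u|^{p-2}\nabla u\,\nabla v\,dw+\int_E a(x)|\nabla u|^{q-2}\nabla u\,\nabla v\,dw
\]
and applying the Musielak--Orlicz H\"older inequality together with Proposition \ref{promodular}(iii) to obtain $\norm{L(u)}_{*}\leq C(1+\norm{u}^{q-1})$ on bounded sets. Strict monotonicity reduces to the pointwise inequality $(|\xi|^{s-2}\xi-|\eta|^{s-2}\eta)(\xi-\eta)>0$ for $\xi\neq \eta$ and $s>1$, applied with $s=p$ and $s=q$ edge-by-edge; since $a(x)\geq 0$, the inequality remains strict after summation over $E$ unless $\nabla u\equiv \nabla v$ on $E$, which in $D^{1,\mathcal{H}}(V)$ means $u=v$.

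For (ii), which is the technical heart of the proposition, assume $u_n\rightharpoonup u$ in $D^{1,\mathcal{H}}(V)$ and $\varlimsup_n\langle L(u_n)-L(u),u_n-u\rangle\leq 0$. Monotonicity from (i) forces the bracket to converge to $0$. I would then invoke the quantitative Simon-type pointwise inequalities
\[
(|\xi|^{s-2}\xi-|\eta|^{s-2}\eta)(\xi-\eta)\geq c_s|\xi-\eta|^s\quad(s\geq 2),\qquad (|\xi|^{s-2}\xi-|\eta|^{s-2}\eta)(\xi-\eta)\geq c_s\frac{|\xi-\eta|^2}{(|\xi|+|\eta|)^{2-s}}\quad(1<s<2),
\]
separately on the unweighted $p$-part and the $a(x)$-weighted $q$-part. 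In the super-quadratic case one concludes immediately that $\varrho(\nabla u_n-\nabla u)\to 0$; in the singular range $1<s<2$ one splits $E$ into the subset where $|\nabla u_n|+|\nabla u|$ is small and its complement and applies H\"older, using boundedness of $\{\norm{u_n}\}$. This splitting is where I expect the principal technical obstacle, and I would transplant the calculation of \cite[Proposition~3.1]{liu2018existence} to the edge integral. Once $\varrho(\nabla u_n-\nabla u)\to 0$, Proposition \ref{promodular}(iv) yields $\norm{u_n-u}\to 0$.

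For (iii), injectivity is immediate from strict monotonicity. Surjectivity is a direct application of Browder--Minty: $L$ is continuous, bounded, monotone on the reflexive space $D^{1,\mathcal{H}}(V)$, and coercive, because for $\norm{u}>1$ Proposition \ref{promodular}(iii) gives
\[
\frac{\langle L(u),u\rangle}{\norm{u}}=\frac{\varrho(\nabla u)}{\norm{u}}\geq \norm{u}^{p-1}\to\infty.
\]
To check that $L^{-1}$ is continuous, suppose $L(u_n)\to L(u)$ in $(D^{1,\mathcal{H}}(V))^*$; coercivity bounds $\{u_n\}$, reflexivity extracts a subsequence $u_{n_k}\rightharpoonup v$, and
\[
\langle L(u_{n_k})-L(v),u_{n_k}-v\rangle=\langle L(u_{n_k})-L(u),u_{n_k}-v\rangle+\langle L(u)-L(v),u_{n_k}-v\rangle\to 0,
\]
so property (ii) gives $u_{n_k}\to v$ strongly, whence $L(v)=L(u)$ by continuity of $L$ and $v=u$ by injectivity. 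A standard subsequence argument upgrades this to convergence of the full sequence.
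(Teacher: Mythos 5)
Your proposal is correct in substance, and parts (i) and (iii) run along essentially the same lines as the paper (H\"older for boundedness, the pointwise monotonicity inequalities for strictness, Minty--Browder plus coercivity $\langle L(u),u\rangle=\varrho(\nabla u)\geqslant \norm{u}^p$ for surjectivity, and the $(S)_+$ property to get continuity of $L^{-1}$ through a subsequence argument). Where you genuinely diverge is in (ii): you propose to bound the modular $\varrho(\nabla u_n-\nabla u)$ directly by the duality bracket via the quantitative Simon-type inequalities, immediately in the super-quadratic range and via a H\"older estimate of the form $\int_E\abs{\nabla(u_n-u)}^p\,dw\leqslant \big(\int_E\tfrac{\abs{\nabla(u_n-u)}^2}{(\abs{\nabla u_n}+\abs{\nabla u})^{2-p}}\,dw\big)^{p/2}\big(\int_E(\abs{\nabla u_n}+\abs{\nabla u})^p\,dw\big)^{(2-p)/2}$ in the singular range $1<s<2$ (and its $a(x)$-weighted analogue for the $q$-term), after which Proposition \ref{promodular}(iv) finishes. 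The paper instead exploits the discrete structure: since each edge carries positive weight, boundedness of $\varrho(\nabla u_n)$ gives pointwise boundedness and then (via \eqref{eqmono}) pointwise convergence $\nabla u_n\to\nabla u$ on $E$; a Young-inequality estimate of $\langle L(u_n),u_n-u\rangle$ combined with Fatou then yields convergence of the energy integrals, and a generalized dominated convergence argument gives $\varrho(\nabla u_n-\nabla u)\to 0$. Your route is shorter and avoids the pointwise-convergence/dominated-convergence machinery, at the price of the weighted H\"older computation; the paper's route is softer analytically and uses the graph geometry. One caution if you carry out your plan: unlike the bounded domain of \cite{liu2018existence}, here $w(E)=\infty$, so any step in that transplanted calculation which estimates the contribution of the set where $\abs{\nabla u_n}+\abs{\nabla u}$ is small by the measure of that set must be replaced by the pure H\"older version above (interpreting the quotient as $0$ where the denominator vanishes), which indeed works on an arbitrary measure space using only the boundedness of $\norm{\nabla u_n}_p$ and of $\int_E a\abs{\nabla u_n}^q\,dw$.
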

\begin{proof}
    (i) It is clear that $L: D^{1,\mathcal{H}}(V) \to \left(D^{1,\mathcal{H}}(V)\right)^*$ is  continuous. Now we prove that $L$ is bounded. For convenience in writing we set $\lambda_{1}:=\norm{u}, \lambda_{2}:=\norm{v}$. By H\"{o}lder's inequality, we have that
\[
\begin{aligned}
\left|\left\langle L(u), v \right\rangle\right| = & \left| \int_{E}\abs{\nabla u}^{p-2} \nabla u \nabla v\, dw + \int_{E} a(x)\abs{\nabla u}^{q-2} \nabla u \nabla v \, dw \right|\\
\leqslant & \left(\int_{E}\abs{\nabla u}^p dw\right)^{\frac{p-1}{p}}\left(\int_{E}\abs{\nabla v}^p dw\right)^{\frac{1}{p}} \\
& +\left(\int_{E} a(x)\abs{\nabla u}^{q} dw\right)^{\frac{q-1}{q}}\left(\int_{E} a(x) \abs{\nabla v}^{q} dw\right)^{\frac{1}{q}}.
\end{aligned}
\]
Hence, by Proposition \ref{eqmono}, we have that
\[
\norm{L(u)}_{\left(D^{1,\mathcal{H}}(V)\right)^{*}}=\sup _{\norm{v} \leqslant 1}|\langle L(u), v\rangle| \leqslant \left(\varrho\left( \nabla u \right) \right)^\frac{p-1}{p} +  \left(\varrho\left( \nabla u \right)\right)^\frac{q-1}{q},
\]
which implies that $L$ is bounded.

The strict monotonicity of $L$ follows easily from the following inequalities, and we refer to \cite{Kichenassamy1986Singular,Lindqvist2017notes},
\begin{equation}
\label{eqmono}
\begin{aligned}
& \left(|\xi|^{p-2} \xi-|\eta|^{p-2} \eta\right)(\xi-\eta) \cdot\left(|\xi|^{p}+|\eta|^{p}\right)^{\frac{2-p}{p}} \geqslant(p-1)|\xi-\eta|^{p} \quad \text { if } 1<p<2, \\
& \left(|\xi|^{p-2} \xi-|\eta|^{p-2} \eta\right)(\xi-\eta) \geqslant\left(\frac{1}{2}\right)^{p}|\xi-\eta|^{p} \quad \text { if } p \geqslant 2
\end{aligned}
\end{equation}
(ii) Assume that $\left\{u_{n}\right\} \subset D^{1,\mathcal{H}}(V)$, $u_{n}  \rightharpoonup u$ and
\[
\varlimsup _{n \to\infty}\left\langle L\left(u_{n}\right)-L(u), u_{n}-u\right\rangle \leqslant 0 .
\]
Then, since $L$ is monotone, we have
\[
\varliminf _{n \to\infty}\left\langle L\left(u_{n}\right)-L(u), u_{n}-u\right\rangle \geqslant 0.
\]
Thus
\[
\lim _{n \to\infty}\left\langle L\left(u_{n}\right)-L(u), u_{n}-u\right\rangle=0,
\]
that is,
\[
\begin{aligned}
\lim _{n \to \infty} & \left(\int_E\left(\left|\nabla u_{n}\right|^{p-2} \nabla u_{n}-|\nabla u|^{p-2} \nabla u\right)\left(\nabla u_{n}-\nabla u\right)\, dw\right. \\
& \left.+\int_E a(x)\left(\left|\nabla u_{n}\right|^{q-2} \nabla u_{n}-|\nabla u|^{q-2} \nabla u\right)\left(\nabla u_{n}-\nabla u\right)\, dw\right)=0.
\end{aligned}
\]

Next, we claim that $\nabla u_{n} \to \nabla u$ pointwise in $E$. In fact, it follows from $u_{n} \rightharpoonup u$ and Proposition \ref{promodular}(iv) that $\{\varrho(\nabla u_n)\}$ is bounded. Then,
\[
    \abs{\nabla u_n}^p(x,y)w_{xy} \leqslant \int_E{\abs{\nabla u_n}^p}\, dw \leqslant \varrho(\nabla u_n)
\]
shows that $\{ \abs{\nabla u_n}^p(x,y) \}$ is bounded for every $xy \in E$. For $1 < p < 2$, by \eqref{eqmono}, we have
\begin{equation*}
\begin{aligned}
(p-1)\abs{\nabla (u_n - u)}^p(x,y)  &\leqslant \left(\left(\left|\nabla u_{n}\right|^{p-2} \nabla u_{n}-|\nabla u|^{p-2} \nabla u\right)\left(\nabla u_{n}-\nabla u\right)\cdot{\left(\abs{\nabla u_n}^p + \abs{\nabla u}^p\right)}^\frac{2-p}{p}\right)(x,y)\\
&\leqslant \frac{1}{w_{xy}}\left\langle L\left(u_{n}\right)-L(u), u_{n}-u\right\rangle {\left(\abs{\nabla u_n}^p + \abs{\nabla u}^p\right)}^\frac{2-p}{p}(x,y) \to 0
\end{aligned}
\end{equation*}
for each $xy \in E$. Thus, we conclude $\nabla u_{n} \to \nabla u$ pointwise in $E$. The claim is trivial for $p \geqslant 2$ by \eqref{eqmono}.

By H\"{o}lder's inequality and Young's inequality, it is easy to compute  that
\[
\begin{aligned}
\left\langle L\left(u_{n}\right), u_{n}-u\right\rangle= & \int_{E}\left(\left|\nabla u_{n}\right|^{p-2} \nabla u_{n}\right)\left(\nabla u_{n}-\nabla u\right) dw \\
& +\int_{E} a(x)\left(\left|\nabla u_{n}\right|^{q-2} \nabla u_{n}\right)\left(\nabla u_{n}-\nabla u\right) dw \\
= & \int_{E}\left(\left|\nabla u_{n}\right|^{p}+a(x)\left|\nabla u_{n}\right|^{q}\right) dw \\
& -\int_{E}\left(\left|\nabla u_{n}\right|^{p-2} \nabla u_{n} \nabla u+a(x)\left|\nabla u_{n}\right|^{q-2} \nabla u_{n} \nabla u\right)\, dw \\
\geqslant & \int_{E}\left(\left|\nabla u_{n}\right|^{p}+a(x)\left|\nabla u_{n}\right|^{q}\right) dw-\int_{E}\left(\left|\nabla u_{n}\right|^{p-1}|\nabla u|+a(x)\left|\nabla u_{n}\right|^{q-1}|\nabla u|\right)\, dw \\
\geqslant & \int_{E}\left(\left|\nabla u_{n}\right|^{p}+a(x)\left|\nabla u_{n}\right|^{q}\right) dw-\frac{p-1}{p} \int_{E}\left|\nabla u_{n}\right|^{p}\, dw-\frac{1}{p} \int_{E}|\nabla u|^{p}\, dw \\
& -\frac{q-1}{q} \int_{E} a(x)\left|\nabla u_{n}\right|^{q}\, dw-\frac{1}{q} \int_{E} a(x)|\nabla u|^{q}\, dw \\
= & \int_{E}\left(\frac{1}{p}\left|\nabla u_{n}\right|^{p}+\frac{1}{q} a(x)\left|\nabla u_{n}\right|^{q}\right) dw-\int_{E}\left(\frac{1}{p}|\nabla u|^{p}+\frac{1}{q} a(x)|\nabla u|^{q}\right) dw .
\end{aligned}
\]
Noting that
\[
\lim _{n \to\infty}\left\langle L(u), u_{n}-u\right\rangle=0,
\]
we conclude that
\[
\lim_{n \to\infty}\left\langle L\left(u_{n}\right), u_{n}-u\right\rangle = 0,
\]
that is,
\[
\varlimsup _{n \to\infty} \int_{E}\left(\frac{1}{p}\left|\nabla u_{n}\right|^{p}+\frac{1}{q} a(x)\left|\nabla u_{n}\right|^{q}\right) dw \leqslant \int_{E}\left(\frac{1}{p}|\nabla u|^{p}+\frac{1}{q} a(x)|\nabla u|^{q}\right) dw .
\]
On the other hand, it follows from Fatou's lemma that
\[
\varliminf _{n \to\infty} \int_{E}\left(\frac{1}{p}\left|\nabla u_{n}\right|^{p}+\frac{1}{q} a(x)\left|\nabla u_{n}\right|^{q}\right) dw \geqslant \int_{E}\left(\frac{1}{p}|\nabla u|^{p}+\frac{1}{q} a(x)|\nabla u|^{q}\right) dw .
\]
Thus, we have that
\[
\lim _{n \to\infty} \int_{E}\left(\frac{1}{p}\left|\nabla u_{n}\right|^{p}+\frac{1}{q} a(x)\left|\nabla u_{n}\right|^{q}\right) dw=\int_{E}\left(\frac{1}{p}|\nabla u|^{p}+\frac{1}{q} a(x)|\nabla u|^{q}\right) dw,
\]
Since
\[
\begin{aligned}
\left|\nabla u_{n}-\nabla u\right|^{p}+a(x)\left|\nabla u_{n}-\nabla u\right|^{q} \leqslant & C\left(\frac{1}{p}\left|\nabla u_{n}\right|^{p}+\frac{1}{q} a(x)\left|\nabla u_{n}\right|^{q}\right) \\
& +C\left(\frac{1}{p}|\nabla u|^{p}+\frac{1}{q} a(x)|\nabla u|^{q}\right),
\end{aligned}
\]
it follows from a variant of Lebesgue dominated convergence theorem that
\[
\lim _{n \to\infty} \int_{E}\left(\left|\nabla u_{n}-\nabla u\right|^{p}+a(x)\left|\nabla u_{n}-\nabla u\right|^{q}\right) dw=0.
\]
Hence, we obtain that $u_{n} \to u$ in $D^{1,\mathcal{H}}(V)$.

(iii) Since $L$ is strictly monotone and
\[
\lim _{\|u\| \to\infty} \frac{\langle L(u), u\rangle}{\|u\|}=\frac{\int_{E}\left(|\nabla u|^{p}+a(x)|\nabla u|^{q}\right)\, dw}{\|u\|}=+\infty,
\]
that is, $L$ is coercive, we know $L$ is a surjection in view of Minty-Browder theorem (see \cite[Theorem 26A]{ZE}). Hence $L$ has an inverse mapping $L^{-1}: \left(D^{1,\mathcal{H}}(V)\right)^* \to D^{1,\mathcal{H}}(V)$. Therefore, in order to complete the proof of (iii), it suffices to prove that $L^{-1}$ is continuous. If $f_{n}, f \in \left(D^{1,\mathcal{H}}(V)\right)^*, f_{n} \to f$, letting $u_{n}=L^{-1}\left(f_{n}\right), u=L^{-1}(f)$, then $L\left(u_{n}\right)=f_{n}, L(u)=f$. Note that $\left\{u_{n}\right\}$ is bounded in $D^{1,\mathcal{H}}(V)$. Therefore, there is a subsequence $\{u_{n_k}\}$ that converges weakly to a limit, say $u_0$. We conclude from $f_{n_k} \to f$ that
\[
\lim _{k \to\infty}\left\langle L\left(u_{n_k}\right)-L\left(u_0\right), u_{n_k}-u_0\right\rangle=\lim _{k \to\infty}\left\langle f_{n_k}, u_{n_k}-u_0\right\rangle=0.
\]
Since $L$ is of type $(S)_+$, $u_{n_k} \to u_0$. Then, by injectivity of $L$, we have $u_0 = u$. Reasoning as above, we know that every subsequence of $\{u_n\}$ contains a convergent subsequence that converges to $u$. Hence, $u_n \to u$,  and $L^{-1}$ is continuous.
\end{proof}
Now, by adapting the argument of \cite[Theorem 2.1]{Gri0}, we derive the integration by parts formula for double phase operator $L$.
\begin{proposition}
  \label{progreen}
Let $u \in D^{1,\mathcal{H}}(V)$ and $v \in C_0(V)$. Then
\begin{equation}
\label{eqgreen}
  \left \langle L(u),v \right \rangle=-\int_V{v \cdot \mathrm{div}(\abs{\nabla u}^{p-2}\nabla u+a(x)\abs{\nabla u}^{q-2}\nabla u)}\, d\mu
\end{equation}
\end{proposition}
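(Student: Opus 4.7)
The plan is to unfold both sides of \eqref{eqgreen} into sums indexed by $V \times V$ and exploit the symmetry $w_{xy} = w_{yx}$ by swapping the dummy indices. A preliminary observation is that since $v \in C_0(V)$ has finite support and $G$ is locally finite, the function $\nabla v$ is supported on a finite subset of $E$, so the integral $\langle L(u),v\rangle$ is a finite sum and no convergence issue arises. Similarly, $-\int_V v\cdot \mathrm{div}(\cdot)\, d\mu$ is a finite sum over $\mathrm{supp}(v)$.

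First I would compute the left hand side directly from the definition of $L$ and the definition of $\int_E \cdot \, dw$:
\[
\langle L(u),v\rangle = \frac{1}{2}\sum_{(x,y)\in V\times V} w_{xy}\Bigl(|u(y)-u(x)|^{p-2} + a(x)|u(y)-u(x)|^{q-2}\Bigr)(u(y)-u(x))(v(y)-v(x)).
\]
I would split the factor $(v(y)-v(x))$ and treat the two resulting sums symmetrically: in the sum containing $v(y)$, relabel $x \leftrightarrow y$ using $w_{xy} = w_{yx}$. For the $p$-term the expression $|u(y)-u(x)|^{p-2}(u(y)-u(x))$ is antisymmetric, so the relabeling simply doubles the contribution involving $v(x)$ with a sign. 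For the $q$-term the additional factor $a(x)$ is not symmetric in $(x,y)$, and the swap turns it into $a(y)$; combining the two halves produces exactly the symmetric combination $\frac{a(x)+a(y)}{2}$ that appears in the pointwise formula \eqref{eqdoublephase}.

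Second, I would collapse the double sum $\sum_{(x,y)}$ to $\sum_x \sum_{y\sim x}$ and recognize the inner sum as $-\mu(x)\cdot L(u)(x)$ from \eqref{eqdoublephase}, yielding
\[
\langle L(u),v\rangle = -\sum_{x\in V} \mu(x)\, v(x)\,\mathrm{div}\bigl(|\nabla u|^{p-2}\nabla u+a(x)|\nabla u|^{q-2}\nabla u\bigr)(x),
\]
which is precisely the right hand side of \eqref{eqgreen} once rewritten with $\int_V \cdot \, d\mu$. To finish, one should also justify that the identity extends from $C_0(V)$ to all $u\in D^{1,\mathcal{H}}(V)$: for fixed $v\in C_0(V)$, both sides are continuous in $u$ with respect to the $D^{1,\mathcal{H}}(V)$ norm (the left by boundedness of $L$ established earlier, the right because only finitely many lattice points $x$ and their neighbors $y$ contribute), so density of $C_0(V)$ in $D^{1,\mathcal{H}}(V)$ closes the argument.

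The main obstacle is purely bookkeeping in the $q$-term: unlike the $p$-term, the weight $a(x)$ breaks the antisymmetry in $(x,y)$, and one must keep careful track of which variable $a$ is evaluated at during the swap. The payoff is that the symmetrization produces exactly the factor $\frac{a(x)+a(y)}{2}$ appearing in the pointwise definition of $L$, so the identity closes with no additional hypothesis beyond $a\geqslant 0$, $a\in C(V)$.
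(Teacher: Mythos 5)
Your proposal is correct and follows essentially the same route as the paper: both unfold $\langle L(u),v\rangle$ into a finite (thanks to $v\in C_0(V)$) weighted sum and perform the discrete summation by parts by swapping the dummy indices $x\leftrightarrow y$ using $w_{xy}=w_{yx}$; the paper simply keeps the field abstract as $f=\abs{\nabla u}^{p-2}\nabla u+a(x)\abs{\nabla u}^{q-2}\nabla u$ and lets the antisymmetrization $f(x,y)-f(y,x)$ in the definition of $\mathrm{div}$ absorb the bookkeeping that you carry out explicitly to produce the factor $\frac{a(x)+a(y)}{2}$. Your closing density-in-$u$ remark is extra care the paper does not include, but it does not change the argument.
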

\begin{proof}
  Let $f := \abs{\nabla u}^{p-2}\nabla u+a(x)\abs{\nabla u}^{q-2}\nabla u$, then
  \[
    \begin{aligned}
      \left \langle L(u),v \right \rangle
      & = \int_{E}{f(x,y)(\nabla v)(x,y)}\, dw\\
      & = \frac{1}{2}\sum_{xy \in E }w_{xy}{f(x,y)(\nabla v)(x,y)}\\
      & = \frac{1}{2}\sum_{x \in V }\sum_{y \sim x}w_{xy}f(x,y)(v(y)-v(x))\\
      & = \frac{1}{2}\sum_{x \in V }\sum_{y \sim x}w_{xy}f(x,y)v(y)-
      \frac{1}{2}\sum_{x \in V }\sum_{y \sim x}w_{xy}f(x,y)v(x)\\
      & = \frac{1}{2}\sum_{y \in V }\sum_{x \sim y}w_{yx}f(y,x)v(x)-
      \frac{1}{2}\sum_{x \in V }\sum_{y \sim x}w_{xy}f(x,y)v(x)\\
      & = \frac{1}{2}\sum_{yx \in E }w_{yx}f(y,x)v(x)-
      \frac{1}{2}\sum_{x \in V }\sum_{y \sim x}w_{xy}f(x,y)v(x)\\
      & = \frac{1}{2}\sum_{xy \in E }w_{xy}f(y,x)v(x)-
      \frac{1}{2}\sum_{x \in V }\sum_{y \sim x}w_{xy}f(x,y)v(x)\\
      & = - \frac{1}{2}\sum_{x \in V }\sum_{y \sim x}w_{xy}(f(x,y)-f(y,x))v(x)\\
      & = - \int_V{v \cdot (\mathrm{div}f)(x)}\, d\mu\\
      & = -\int_V{v \cdot \mathrm{div}(\abs{\nabla u}^{p-2}\nabla u+a(x)\abs{\nabla u}^{q-2}\nabla u)}\, d\mu.
    \end{aligned}
  \]
\end{proof}
\subsection{The space \texorpdfstring{$D^{1,\mathcal{H}}(\mathbb{Z}^N)$}{Lg} and proof of Theorem \ref{theorem1}}  \quad \\
\label{ZN}
From now on, we study problem \eqref{eq1} on lattice graph $\mathbb{Z}^{N}$. Recalling that $\mu \equiv 1$ and $w \equiv 1$ on lattice graph $\mathbb{Z}^{N}$, we know that $I$ ( and $L$) defined in Section \ref{secin} and \ref{sec:preliminaries} coincide.

The following  interpolation inequalities are crucial for below, and one can refer to \cite[Lemma 2.1]{Huang} for the details.
\begin{proposition}
  \label{prointer}
Suppose $u\in \ell^{\alpha}(\mathbb{Z}^{N})$, then $\norm{u}_\beta \leqslant \norm{u}_\alpha$ for all $1 \leqslant \alpha \leqslant \beta \leqslant \infty$. Moreover, suppose $u \in D^{1,\alpha}(\mathbb{Z}^{N})$, then $\norm{\nabla u}_\beta \leqslant \norm{\nabla u}_\alpha$ for all $1 \leqslant \alpha \leqslant \beta \leqslant \infty$.
\end{proposition}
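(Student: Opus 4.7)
The plan is to exploit the fact that on $\mathbb{Z}^N$ the measure is counting ($\mu \equiv 1$, $w \equiv 1$), which forces $\ell^\alpha(\mathbb{Z}^N) \hookrightarrow \ell^\beta(\mathbb{Z}^N)$ for $\alpha \leqslant \beta$ — the opposite of what happens in Lebesgue spaces on bounded domains. The proof will be a one-line pointwise bound followed by a one-line interpolation, done first for $u$ and then transferred verbatim to $\nabla u$ on the edge space $(E, w)$.

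First I would reduce to the sup-norm endpoint: for any $x_0 \in \mathbb{Z}^N$ and any $\alpha \in [1,\infty)$,
\[
|u(x_0)|^\alpha \;\leqslant\; \sum_{x \in \mathbb{Z}^N} |u(x)|^\alpha \;=\; \|u\|_\alpha^\alpha,
\]
because every summand is non-negative and $\mu(x_0)=1$. Taking the supremum over $x_0$ gives the key estimate $\|u\|_\infty \leqslant \|u\|_\alpha$. This already proves the case $\beta = \infty$.

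For the case $1 \leqslant \alpha \leqslant \beta < \infty$, I would split $|u(x)|^\beta = |u(x)|^{\beta-\alpha}\,|u(x)|^\alpha$ and bound the first factor by $\|u\|_\infty^{\beta-\alpha}$:
\[
\|u\|_\beta^\beta \;=\; \sum_{x \in \mathbb{Z}^N} |u(x)|^{\beta-\alpha}|u(x)|^\alpha \;\leqslant\; \|u\|_\infty^{\beta-\alpha}\,\|u\|_\alpha^\alpha \;\leqslant\; \|u\|_\alpha^{\beta-\alpha}\,\|u\|_\alpha^\alpha \;=\; \|u\|_\alpha^\beta,
\]
where the last inequality uses the endpoint bound just established. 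Taking the $\beta$-th root yields $\|u\|_\beta \leqslant \|u\|_\alpha$.

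For the gradient version, the same argument applies to $\nabla u \in \ell^\alpha(E)$, since $w \equiv 1$ on $E$ makes $(E, w)$ also a counting-measure space: the pointwise estimate $|\nabla u(x,y)|^\alpha w_{xy} \leqslant \int_E |\nabla u|^\alpha\, dw$ gives $\|\nabla u\|_{\ell^\infty(E)} \leqslant \|\nabla u\|_{\ell^\alpha(E)}$, and the interpolation step is identical. There is essentially no obstacle here; the only subtlety worth flagging is that the inequality relies crucially on $\inf_x \mu(x) \geqslant 1$ and $\inf_{xy} w_{xy} \geqslant 1$, which is why the statement is posed on $\mathbb{Z}^N$ specifically.
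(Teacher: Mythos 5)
Your proof is correct and follows essentially the same route as the paper: first the endpoint bound $\norm{u}_\infty \leqslant \norm{u}_\alpha$ (valid because $\mu \equiv 1$), then the interpolation $\norm{u}_\beta^\beta \leqslant \norm{u}_\infty^{\beta-\alpha}\norm{u}_\alpha^\alpha$, and the identical argument on $(E,w)$ with $w \equiv 1$ for $\nabla u$. You simply spell out the pointwise step that the paper leaves implicit, so there is nothing to add.
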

\begin{proof}
    Let $u\in \ell^{\alpha}(\mathbb{Z}^{N})$ and $\beta \in \left(\alpha, \infty \right)$, we have
    \[
        \norm{u}_\infty \leqslant \norm{u}_\alpha
    \]
    and
    \[
        \norm{u}_\beta^\beta \leqslant \norm{u}_\alpha^\alpha\norm{u}_\infty^{\beta-\alpha},
    \]
  thus
    \[
        \norm{u}_\beta \leqslant \norm{u}_\alpha.
    \]
    Similarity, we may show that $\norm{\nabla u}_\beta \leqslant \norm{\nabla u}_\alpha$  with $u \in D^{1,\alpha}(\mathbb{Z}^{N})$ and for all $1 \leqslant \alpha \leqslant \beta \leqslant \infty$.
\end{proof}
Now, by combining \red{the} Sobolev inequality \eqref{eqsobolev} with Proposition \ref{prointer}, we can conclude the following embedding result.
\begin{proposition}
  \label{proembed}
    Let $N \geqslant 2$, $1 < p < N$ and $ p^*:= \frac{Np}{N-p}$. Then for all $1 \leqslant \alpha \leqslant \beta \leqslant \infty$, the following embeddings hold:
  \begin{enumerate}[label=(\roman*)]
    \item $\ell^\alpha(\mathbb{Z}^N) \hookrightarrow \ell^\beta(\mathbb{Z}^N)$;
    \item $D^{1,\alpha}(\mathbb{Z}^N) \hookrightarrow D^{1,\beta}(\mathbb{Z}^N)$,
    \item   $D^{1,\mathcal{H}}(\mathbb{Z}^N) \hookrightarrow D^{1,p}(\mathbb{Z}^N)$,
    \item   $D^{1,p}(\mathbb{Z}^N) \hookrightarrow \ell^{p^*}(\mathbb{Z}^N)$,
    \item   $D^{1,\mathcal{H}}(\mathbb{Z}^N) \hookrightarrow \ell^{\beta}(\mathbb{Z}^N)$ for all $p^* \leqslant \beta \leqslant \infty$.
  \end{enumerate}
\end{proposition}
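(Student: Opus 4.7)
The plan is to establish the five embeddings in order, noting that (i), (ii), (iv) are essentially already available, while (iii) requires a short modular calculation and (v) is obtained by chaining the earlier parts.

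First I would handle (i) and (ii): these are immediate consequences of Proposition \ref{prointer}. Indeed, for $u \in \ell^\alpha(\mathbb{Z}^N)$ with $\alpha \leqslant \beta$, the interpolation inequality gives $\|u\|_\beta \leqslant \|u\|_\alpha$, so the inclusion map is bounded with norm at most $1$; the same argument applied to $\nabla u$ yields (ii). For (iv), I would simply quote the Sobolev inequality \eqref{eqsobolev} on $\mathbb{Z}^N$, which is precisely the statement $\|u\|_{p^*} \leqslant C_{p^*}\|\nabla u\|_p$ for $u \in D^{1,p}(\mathbb{Z}^N)$.

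The only real work is (iii). Here I would exploit Proposition \ref{promodular}(i). Fix $u \in D^{1,\mathcal{H}}(\mathbb{Z}^N) \setminus \{0\}$ and let $a = \|u\|$; then $\varrho(\nabla u/a) = 1$, that is,
\[
  \int_E \left(\Bigl|\frac{\nabla u}{a}\Bigr|^p + a(x) \Bigl|\frac{\nabla u}{a}\Bigr|^q\right) dw = 1.
\]
Since $a(x) \geqslant 0$, the first term alone is bounded above by $1$, giving $\|\nabla u/a\|_p^p \leqslant 1$, hence $\|\nabla u\|_p \leqslant \|u\|$. Thus the natural inclusion $D^{1,\mathcal{H}}(\mathbb{Z}^N) \hookrightarrow D^{1,p}(\mathbb{Z}^N)$ is continuous with embedding constant $1$. (Density of $C_0(\mathbb{Z}^N)$ in both spaces ensures the inclusion is well defined.)

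Finally, (v) follows by composition: chaining (iii), (iv), and (i) yields
\[
  D^{1,\mathcal{H}}(\mathbb{Z}^N) \hookrightarrow D^{1,p}(\mathbb{Z}^N) \hookrightarrow \ell^{p^*}(\mathbb{Z}^N) \hookrightarrow \ell^\beta(\mathbb{Z}^N)
\]
for every $\beta \geqslant p^*$, with an explicit constant obtained as the product of the three embedding constants. The only mildly subtle step is the modular-to-norm passage in (iii), but Proposition \ref{promodular} makes it a one-line computation; no substantial obstacle is expected.
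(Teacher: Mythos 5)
Your proposal is correct and follows essentially the same route as the paper: (i), (ii) from Proposition \ref{prointer}, (iii) by evaluating the modular at $\nabla u/\norm{u}$ via Proposition \ref{promodular}(i) and discarding the nonnegative $a(x)$-term, (iv) from the Sobolev inequality \eqref{eqsobolev}, and (v) by chaining the previous embeddings. The only cosmetic point is the notation clash of using $a$ both for $\norm{u}$ and for the weight $a(x)$; otherwise nothing to add.
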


\begin{proof}
    The embeddings (i)
 and (ii) follow from Proposition \ref{prointer}.
    For any $u \in C_0(\mathbb{Z}^N) \setminus \{0\}$, we have
    \[
     1 = \varrho\left(\frac{\nabla u}{\norm{\nabla u}}\right) \geqslant \frac{\norm{\nabla u}_p^p}{\norm{\nabla u}^p},
    \]
    which implies the embedding (iii). The embedding (iv) follows from the Sobolev inequality \eqref{eqsobolev} and embedding (v) follows from (i) and (iv).
\end{proof}
\begin{remark}
    \label{remarkequi}
    If $a \in \ell^\infty(\mathbb{Z}^N)$, then we have
    \[
     \varrho\left(\frac{\nabla u}{\norm{\nabla u}}_p\right) \leqslant 1 + \norm{a}_\infty
    \]
    for all $u \in C_0(\mathbb{Z}^N) \setminus \{0\}$. Thus $\norm{u}$ and $\norm{\nabla u}_p$ are equivalent on $C_0(\mathbb{Z}^N)$ by Proposition \ref{proembed} (iii) and  Proposition \ref{promodular} (iv). It implies $D^{1,\mathcal{H}}(\mathbb{Z}^N)$ and $D^{1,p}(\mathbb{Z}^N)$ are same space up to equivalence of norms.
\end{remark}
It is the position to give the proof of Theorem \ref{theorem1}.
\begin{proof}[Proof of Theorem 1.1]
By H\"{o}lder's inequality, Proposition \ref{proembed} (i) and (v), we have that
\[
    \langle f, v\rangle=\int_{\mathbb{Z}^N} f(x) v d\mu \leqslant \norm{f}_{\frac{p^*}{p^*-1}}\norm{v}_{p^*} \leqslant \norm{f}_{r}\norm{v}_{p^*}
\]
for all $v \in D^{1,\mathcal{H}}(\mathbb{Z}^N)$.
Thus $f \in (D^{1,\mathcal{H}}(\mathbb{Z}^N))^*$. Since $L$ is a homeomorphism, there exists a unique $u \in D^{1,\mathcal{H}}(\mathbb{Z}^N)$ such that
\[
     \int_E{\abs{\nabla u}^{p-2}{\nabla u}{\nabla v}}\, dw
    + \int_E{a(x)\abs{\nabla u}^{q-2}{\nabla u}{\nabla v}}\, dw
     = \int_{\mathbb{Z}^N} f(x)v\, d\mu
\]
for all $v \in D^{1,\mathcal{H}}(\mathbb{Z}^N)$.
\end{proof}
\section{Proof of Theorem \ref{theoremcompact}}\label{sectioncompact}
In this section, by adapting the arguments of \cite[Lemma 2.2]{Gri1}, we first prove a compact lemma, which is useful to the proof of main theorem.
\begin{lemma}
\label{lemmacompact}
    Assume that $a$ satisfies
 \[
    \lim_{\abs{x} \to \infty}a(x) = +\infty.
 \]
 Then, $D^{1,\mathcal{H}}(\mathbb{Z}^N)$ is compactly embedded in $\ell^{r}(\mathbb{Z}^N)$ for all $r > p^*$.
\end{lemma}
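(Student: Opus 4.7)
The plan is to prove the compact embedding by extracting a pointwise convergent subsequence from a bounded sequence and then upgrading to strong $\ell^{r}$-convergence via a uniform decay estimate at infinity that exploits $a(x)\to\infty$. Let $\{u_n\}\subset D^{1,\mathcal{H}}(\mathbb{Z}^N)$ be bounded. Reflexivity (Proposition \ref{prospaceD1V}) gives a subsequence with $u_n\rightharpoonup u$ in $D^{1,\mathcal{H}}(\mathbb{Z}^N)$, and the continuous embedding $D^{1,\mathcal{H}}(\mathbb{Z}^N)\hookrightarrow\ell^{p^*}(\mathbb{Z}^N)$ from Proposition \ref{proembed}(v) passes this to weak convergence in $\ell^{p^*}(\mathbb{Z}^N)$; testing against point masses $\delta_y$ yields $u_n(y)\to u(y)$ for every $y\in\mathbb{Z}^N$ and $u\in\ell^{p^*}(\mathbb{Z}^N)$. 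The elementary interpolation
\[
\|u_n-u\|_r^{\,r}\leq\|u_n-u\|_\infty^{\,r-p^*}\,\|u_n-u\|_{p^*}^{\,p^*},
\]
valid because $r>p^*$ and $\{u_n-u\}$ is bounded in $\ell^{p^*}$, reduces the claim to $\|u_n-u\|_\infty\to 0$; and, since pointwise convergence is automatically uniform on every finite ball, it further reduces to the uniform tail estimate $\lim_{R\to\infty}\sup_{n}\sup_{|x|>R}|u_n(x)|=0$.

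I would prove the uniform tail decay by contradiction. If it fails, after passing to a subsequence there exist $\delta>0$ and $x_n\in\mathbb{Z}^N$ with $|x_n|\to\infty$ and $|u_n(x_n)|\geq\delta$. Fix a radius $R$ (to be chosen) and consider the graph ball $B(x_n,R)$. Because $a(x)\to\infty$, for $n$ large every $y\in B(x_n,R)$ satisfies $a(y)\geq M$, where $M$ is an arbitrary prescribed number. The uniform bound $\varrho(\nabla u_n)\leq C$ then forces
\[
\sum_{\text{edges in }B(x_n,R)}|\nabla u_n|^{q}\leq C/M.
\]
The discrete $q$-Poincar\'e inequality on a ball of radius $R$ in $\mathbb{Z}^N$ (whose constant scales like $R$) yields $|u_n(x_n)-\bar u_n|\leq CR/M^{1/q}$, where $\bar u_n$ denotes the mean of $u_n$ on $B(x_n,R)$. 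Choosing $M\geq(2CR/\delta)^{q}$ gives $|\bar u_n|\geq\delta/2$, and H\"older's inequality on the ball converts this into
\[
\sum_{B(x_n,R)}|u_n|^{p^*}\geq |B(x_n,R)|\left(\delta/2\right)^{p^*}\geq c\,R^{N}\left(\delta/2\right)^{p^*}.
\]
Since $R$ can be chosen as large as we wish independently of $n$ (depending only on $\delta$ and the uniform bound $\sup_n\|u_n\|_{p^*}$), this contradicts the boundedness of $\{u_n\}$ in $\ell^{p^*}(\mathbb{Z}^N)$.

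With the uniform tail estimate in hand, $|u(x)|\to 0$ as $|x|\to\infty$ (since $u\in\ell^{p^*}(\mathbb{Z}^N)$) combined with the pointwise convergence $u_n(y)\to u(y)$ gives $\|u_n-u\|_\infty\to 0$, and the interpolation inequality above closes the argument for every $r>p^*$. I expect the quantitative step in the middle paragraph to be the main obstacle: the unbounded weight $a$ controls $|\nabla u|^{q}$ on the tail but neither $|\nabla u|^{p}$ nor $|u|^{r}$ directly, so the crux of the proof is to translate this gradient-level control into averaged/pointwise control of $u_n$ itself through the discrete Poincar\'e inequality on a large lattice ball, carefully ordering the choices of the radius $R$, the threshold $M$, and the index $n$.
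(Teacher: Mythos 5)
Your argument is correct, but it takes a genuinely different route from the paper's. The paper works at the gradient level: writing $v_n=u_n-u$, it shows $\norm{\nabla v_n}_q\to 0$ by splitting $E$ into a finite set of edges near the origin (where pointwise convergence of the gradients kills the sum) and the far region (where $\inf a$ is large, so $\int a\abs{\nabla v_n}^q\,dw\leqslant C$ forces smallness), and then interpolates $\norm{\nabla v_n}_s^s\leqslant\norm{\nabla v_n}_p^p\norm{\nabla v_n}_q^{s-p}$ with $s=\frac{Nr}{N+r}$, so that the Sobolev inequality \eqref{eqsobolev} at exponent $s$ (note $s^*=r$, $p<s<N$) gives $v_n\to 0$ in $\ell^r$. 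You instead upgrade pointwise convergence to uniform convergence $\norm{u_n-u}_\infty\to 0$ by a contradiction argument on far-away lattice balls: the unbounded weight makes $\sum\abs{\nabla u_n}^q$ small there, a Poincar\'e-type estimate on the ball then keeps $u_n$ of size $\delta/2$ on a ball of radius $R$, and the volume growth $\abs{B}\sim R^N$ contradicts the uniform $\ell^{p^*}$ bound; the function-level interpolation $\norm{u_n-u}_r^r\leqslant\norm{u_n-u}_\infty^{r-p^*}\norm{u_n-u}_{p^*}^{p^*}$ finishes. Both proofs are valid; the paper's is shorter and needs nothing beyond \eqref{eqsobolev} and Proposition \ref{prointer}, while yours requires the discrete $(q,q)$-Poincar\'e inequality on lattice balls (or, equivalently, a telescoping-path-plus-H\"older estimate, which is how one should justify the pointwise bound $\abs{u_n(x_n)-\bar u_n}\leqslant CR M^{-1/q}$ -- with the unnormalized form of the inequality this single term is indeed dominated by the full sum, so your step is fine) and the volume growth of balls, i.e.\ more lattice-specific input. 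In exchange, your argument makes the compactness mechanism transparent (no mass of $u_n$ can persist near infinity where $a$ is large), yields uniform convergence and hence also covers $r=\infty$, and the careful ordering of $\delta$, $R$, $M$, and $n$ that you flag as the crux is exactly right as you arranged it.
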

\begin{proof}
    Let $\{u_n\}$be a bounded sequence in $D^{1,\mathcal{H}}(\mathbb{Z}^N)$. It follows from Proposition \ref{proembed} (v) that $\{\norm{u_n}_\infty\}$ is bounded. Moreover, passing to a subsequence if necessary, we obtain $u_n \to u$ pointwise in $\mathbb{Z}^N$ by Bolzano-Weierstrass theorem and diagonal principle, and thus $\nabla u_n \to \nabla u$ pointwise in $E$.

    Let $v_n=u_n-u$, then $\nabla v_n \to 0$ pointwise in $E$, $\{\norm{\nabla v_n}_p\}$ is bounded, and $\int_E{a(x)\abs{\nabla v_n}^q}\, dw < C$ for all $n \in \mathbb{N}^+$ and some positive constant $C$. Now we need to show that $v_n \to 0$ in $\ell^r(\mathbb{Z}^N)$ for all $r > p^*$.

    Define $B_R := \left\{x \in \mathbb{Z}^N : \abs{x} < R \right\}$ for any $R \geqslant 1$. Since $a$ satisfies $\lim_{\abs{x} \to \infty}a(x) = +\infty$, for any $\varepsilon > 0$, there exists $R_1 \geqslant 1$ such that
    \[
        \frac{C}{\inf\limits_{x \in B_R^c} a(x)} < \frac{\varepsilon}{2},\quad \forall R \geqslant R_1.
    \]

    Fix $R_1 \geqslant 1$, let $R_2 := R_1+1$. Since $E\cap\left(B_{R_2} \times B_{R_2}\right)$ is finite, there exists $N_1 > 0$ such that
    \[
         \int_{E\cap\left(B_{R_2} \times B_{R_2}\right)}{\abs{\nabla v_n}^q}\, dw <  \frac{\varepsilon}{2},\quad \forall n \geqslant N_1.
    \]
   It follows from $E\cap\left(B_{R_2} \times B_{R_2}\right)^c \subset E\cap\left(B_{R_1}^c \times B_{R_1}^c\right)$ that for any $n \geqslant N_1$,
    \[
    \begin{aligned}
         \int_E{\abs{\nabla v_n}^q}\, dw &= \int_{E\cap\left(B_{R_2} \times B_{R_2}\right)}{\abs{\nabla v_n}^q}\, dw + \int_{E\cap\left(B_{R_2} \times B_{R_2}\right)^c}{\abs{\nabla v_n}^q}\, dw\\
         &< \frac{\varepsilon}{2} + \int_{E\cap\left(B_{R_1}^c \times B_{R_1}^c\right)}{\abs{\nabla v_n}^q}\, dw\\
         &< \frac{\varepsilon}{2} + \frac{1}{\inf\limits_{x \in B_{R_1^c}} a(x)}\int_{E\cap\left(B_{R_1}^c \times B_{R_1}^c\right)}{a(x)\abs{\nabla v_n}^q}\, dw\\
        &< \varepsilon.
    \end{aligned}
    \]
    Thus $\norm{\nabla v_n}_q \to 0$ as $n\rightarrow\infty$. Let $s := \frac{Nr}{N+r}$. Then $s^*=r$ and $p < s <N$. Thus, by interpolation inequality and Proposition \ref{prointer}, we have
    \[
        \norm{\nabla v_n}_s^s \leqslant \norm{\nabla v_n}_p^p \norm{\nabla v_n}_\infty^{s-p} \leqslant \norm{\nabla v_n}_p^p \norm{\nabla v_n}_q^{s-p}.
    \]
    Then, by \red{the} Sobolev inequality \eqref{eqsobolev}, we conclude $v_n \to 0$ in $\ell^r(\mathbb{Z}^N)$.
\end{proof}
\begin{proof}[Proof of Theorem \ref{theoremcompact}]
    Let $\{u_n\} \subset D^{1,\mathcal{H}}(\mathbb{Z}^N)$ be a minimizing sequence of problem \eqref{eqconstrained}. Since $\{\abs{v_n}\}$ is also a minimizing sequence, we may assume that $v_n$ are non-negative. \red{Since $r>p^*$, by} Bolzano-Weierstrass theorem, diagonal principle, and Lemma \ref{lemmacompact}, passing to a subsequence if necessary, we obtain $u_n \to u$ pointwise in $\mathbb{Z}^N$, $\nabla u_n \to \nabla u$ pointwise in $E$, and $J(u) = \lim\limits_{n \to \infty}J(u_n) = t$. Moreover, by Fatou's lemma, it implies
    \[
        S_{t} = \varliminf_{n \to \infty}{I(u_n)} \geq I(u) \geq S_{t}.
    \]
    Thus $I(u) = S_{t}$. It is clear that $u$ is a pointwise solution of \eqref{eq2}.

    If $u(x_0)=0$ for some fixed $x \in \mathbb{Z}^N$,
    then \eqref{eq2} yields that
    $u(x) = 0$ for all $x \sim x_0$. Since $\mathbb{Z}^N$ is connected, we obtain $u(x) \equiv 0$, which leads to a contradiction with $J(u) = t$.
\end{proof}
\section{Proof of Theorem \ref{theoremmain}}\label{sectionmain}
Let $a \in \ell^\infty(\mathbb{Z}^N)$. For brevity, we will assume $t = \frac{1}{r}$ and abbreviate $S_{\frac{1}{r}}$ to $S$.
\begin{proposition}\label{prospos}
    $S$ is positive.
\end{proposition}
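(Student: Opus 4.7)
The plan is to show that $I$ is bounded below by a positive constant on the constraint manifold $M_{1/r}$ by combining the Sobolev-type embedding of $D^{1,\mathcal{H}}(\mathbb{Z}^N)$ into $\ell^r(\mathbb{Z}^N)$ with the modular-norm comparison of Proposition \ref{promodular}.

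First, I would observe that since $p<q$, one has the trivial pointwise bound
\[
I(u) \;\geq\; \frac{1}{q}\int_E\bigl(|\nabla u|^p + a(x)|\nabla u|^q\bigr)\,dw \;=\; \frac{1}{q}\varrho(\nabla u),
\]
so it suffices to bound $\varrho(\nabla u)$ away from zero on $M_{1/r}$. Next, since $r>p^\ast$, Proposition \ref{proembed}(v) yields a constant $C>0$ such that $\|u\|_r \leq C\|u\|$ for every $u\in D^{1,\mathcal{H}}(\mathbb{Z}^N)$. For $u\in M_{1/r}$ we have $\|u\|_r^r = 1$, hence
\[
\|u\| \;\geq\; \frac{1}{C} \;>\; 0.
\]

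Now I would apply Proposition \ref{promodular}(iii) to translate this lower bound on $\|u\|$ into a lower bound on $\varrho(\nabla u)$. Without loss of generality assume $C\geq 1$, so $1/C\leq 1$. If $\|u\|\geq 1$ then $\varrho(\nabla u)\geq \|u\|^p \geq 1$, while if $1/C \leq \|u\| < 1$ then $\varrho(\nabla u)\geq \|u\|^q \geq (1/C)^q$. In either case,
\[
\varrho(\nabla u) \;\geq\; \min\!\bigl\{1,(1/C)^q\bigr\} \;=\; (1/C)^q,
\]
so that
\[
S \;=\; \inf_{u\in M_{1/r}} I(u) \;\geq\; \frac{1}{q\,C^{q}} \;>\; 0.
\]

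This proof is essentially a direct computation; the only subtlety is bookkeeping between the two regimes $\|u\|\geq 1$ and $\|u\|<1$ forced by the Musielak--Orlicz modular not being homogeneous, but this is resolved cleanly by Proposition \ref{promodular}(iii). I do not anticipate any genuine obstacle, since all the ingredients (the embedding of $D^{1,\mathcal{H}}(\mathbb{Z}^N)$ into $\ell^r(\mathbb{Z}^N)$ for $r>p^\ast$ and the norm--modular comparison) are already established earlier in the paper.
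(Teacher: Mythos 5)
Your proof is correct: every step (the bound $I(u)\geq \tfrac1q\varrho(\nabla u)$, the embedding constant from Proposition \ref{proembed}(v), and the norm--modular comparison of Proposition \ref{promodular}(iii) with the two regimes $\norm{u}\geq 1$ and $\norm{u}<1$) is legitimately available at this point in the paper, and the conclusion $S\geq \tfrac{1}{qC^{q}}>0$ follows. The paper's own argument is shorter and bypasses the Musielak--Orlicz bookkeeping entirely: it simply discards the $a$-term, so that $I(u)\geq \tfrac1p\norm{\nabla u}_p^p$, and applies the discrete Sobolev inequality \eqref{eqsobolev} together with Proposition \ref{proembed}(iii) in the form $\norm{\nabla u}_p\geq C_r\norm{u}_r$, which on the constraint set $\norm{u}_r=1$ immediately gives $S\geq C_r^p/p$. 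So both proofs rest on the same Sobolev inequality, but the paper stays in the homogeneous space $D^{1,p}$ and gets a cleaner (and quantitatively sharper, since it avoids the exponent $q$) constant, whereas your route goes through the full modular and the $D^{1,\mathcal{H}}$-norm; the trade-off is that your argument uses the embedding $D^{1,\mathcal{H}}(\mathbb{Z}^N)\hookrightarrow\ell^r(\mathbb{Z}^N)$ only as a black box, so it would survive in settings where one knows that continuous embedding but does not want to single out the $p$-phase of the functional. One cosmetic remark: Proposition \ref{promodular}(iii) is stated for $\norm{u}<1$ and $\norm{u}>1$, so in the borderline case $\norm{u}=1$ you should invoke part (ii) (which gives $\varrho(\nabla u)=1$) rather than (iii); this does not affect your bound.
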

\begin{proof}
  Since $r > p^*$, by the Sobolev inequality \eqref{eqsobolev} and Proposition \ref{proembed} (iii), we have
  \begin{equation}
    \label{eqsobo}
    \norm{\nabla u}_p \geqslant C_r \norm{u}_r
  \end{equation}
  for all $u \in D^{1,\mathcal{H}}(\mathbb{Z}^N)$, where $C_r$ is a positive constant.

  Hence, we obtain
  \[
    I(\frac{u}{\norm{u}_r})  \geqslant \frac{1}{p}\left(\frac{\norm{\nabla u}_p}{\norm{u}_r}\right)^p \geqslant \frac{C_r^p}{p}
  \]
  for all $u \in D^{1,\mathcal{H}}(\mathbb{Z}^N) \setminus \{0\}$, which, combined with \eqref{eqsobo}, gives
  \[
    S \geqslant \frac{C_r^p}{p} > 0.
  \]
\end{proof}

The following lemma excludes the vanishing case. The proof relies on the arguments in \cite[Lemma 13]{hua2021existence}.
\begin{lemma}
  \label{lemlb}
  Let $\left\{v_n\right\} \subset D^{1,\mathcal{H}}(\mathbb{Z}^N)$
  be a minimizing sequence of \eqref{eqconstrained}. Then $\varliminf\limits _{n\to\infty} \norm{v_n}_{\infty}>0$.
\end{lemma}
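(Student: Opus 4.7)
Since $t=\tfrac{1}{r}$ by convention, every $v_n$ satisfies $J(v_n)=\tfrac{1}{r}$, i.e.\ $\|v_n\|_r=1$. The goal is to exploit this fixed $\ell^r$-mass together with uniform control on $\|\nabla v_n\|_p$ coming from $I(v_n)\to S$, and then use the interpolation inequality in Proposition \ref{prointer} to rule out $\|v_n\|_\infty\to 0$.

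The plan is as follows. First I would extract a bound on $\|\nabla v_n\|_p$: since $\{v_n\}$ is minimizing, there is a constant $M$ with $I(v_n)\le M$ for all large $n$, and from the definition \eqref{eqI} together with $a\ge 0$ we immediately get
\[
\frac{1}{p}\,\|\nabla v_n\|_p^p \;\le\; I(v_n) \;\le\; M.
\]
Second, I would feed this into the Sobolev inequality \eqref{eqsobolev}, which yields $\|v_n\|_{p^*}\le C_{p^*}\|\nabla v_n\|_p\le C$, so $\{v_n\}$ is bounded in $\ell^{p^*}(\mathbb{Z}^N)$. Third, since $r>p^*$ by the hypothesis of Theorem \ref{theoremmain}, I would use the pointwise splitting
\[
|v_n(x)|^r \;=\; |v_n(x)|^{p^*}\,|v_n(x)|^{r-p^*} \;\le\; \|v_n\|_\infty^{r-p^*}\,|v_n(x)|^{p^*},
\]
which, summed over $x\in\mathbb{Z}^N$, gives
\[
1 \;=\; \|v_n\|_r^r \;\le\; \|v_n\|_\infty^{r-p^*}\,\|v_n\|_{p^*}^{p^*} \;\le\; C^{p^*}\,\|v_n\|_\infty^{r-p^*}.
\]
Rearranging yields the lower bound $\|v_n\|_\infty\ge (C^{-p^*})^{1/(r-p^*)}$ uniformly in $n$, so $\varliminf_n\|v_n\|_\infty>0$.

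The step I expect to be most delicate is verifying that the crude bound $I(v_n)\ge \tfrac{1}{p}\|\nabla v_n\|_p^p$ is really enough; in principle, if $r$ were close to $p^*$ one might worry that $\|v_n\|_{p^*}$ could be controlled only through the stronger Musielak norm. However, because Proposition \ref{proembed}(iii) gives $D^{1,\mathcal{H}}(\mathbb{Z}^N)\hookrightarrow D^{1,p}(\mathbb{Z}^N)$ and the Sobolev inequality \eqref{eqsobolev} holds on lattice graphs unconditionally, this concern does not materialize: the $\|\nabla v_n\|_p$ bound from the $p$-part of $I$ alone is sufficient, and we do not need the (possibly unbounded) $q$-term. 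The assumption $r>p^*$ is used exactly once, in the interpolation step, to ensure the exponent $r-p^*$ on $\|v_n\|_\infty$ is positive, which is precisely what forces $\|v_n\|_\infty$ away from zero.
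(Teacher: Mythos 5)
Your proposal is correct and follows essentially the same route as the paper: both arguments use the normalization $\norm{v_n}_r=1$, the interpolation $\norm{v_n}_r^r\leqslant \norm{v_n}_{p^*}^{p^*}\norm{v_n}_\infty^{r-p^*}$, the Sobolev inequality \eqref{eqsobolev} via Proposition \ref{proembed}(iii), and the bound $\frac{1}{p}\norm{\nabla v_n}_p^p\leqslant I(v_n)\leqslant C$ coming from the minimizing property, with $r>p^*$ giving the positive exponent on $\norm{v_n}_\infty$. The only cosmetic difference is that the paper passes to the limit and expresses the lower bound in terms of $S$ (invoking Proposition \ref{prospos}), whereas you keep a uniform constant $M$, which is equally valid.
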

\begin{proof}
  Since $r > p^*$, by the Sobolev inequality \eqref{eqsobolev} and Proposition \ref{prointer}, we have
\[
1 = \norm{v_n}_{r}^{r} \leqslant \norm{v_n}_{p^*}^{p^*}\norm{v_n}_\infty^{r-p^*}\leqslant C_{p^*}^{-p^*}\norm{\nabla v_n}_p^{p^*}\norm{v_n}_{\infty}^{r-p^*}
\leqslant C_{p^*}^{-p^*}I(v_n)^\frac{p^*}{p}\norm{v_n}_{\infty}^{r-p^*}.
\]
By taking the limit, we obtain
\[
1 \leqslant {\left(\frac{S}{C_{p^*}^p}\right)}^{\frac{p^*}{p}}\varliminf\limits_{n\to\infty}\norm{v_n}_\infty^{r-p^*},
\]
which, by Proposition \ref{prospos}, implies
\[
\varliminf\limits_{n\to\infty}\norm{v_n}_\infty^{r-p^*}
\geqslant {\left(\frac{C_{p^*}^p}{S}\right)}^{\frac{p^*}{p(r-p^*)}} > 0.
\]
\end{proof}
Now, we give our key lemma to prove Theorem \ref{theoremmain}.
\begin{lemma}
\label{lemkey}
    Let $0 \leqslant a \in \ell^\infty(\mathbb{Z}^N)$. Let $\{v_n\}$ be a minimizing sequence of \eqref{eqconstrained}, $\{y_n\} \subset \mathbb{Z}^N$ be an arbitrary sequence. Then for the sequence after translation $\left\{u_n(x):=v_n(x+y_n)\right\}$ and $\left\{a_n(x):=a(x+y_n)\right\}$, passing to a subsequence if necessary,
    we have
    \begin{equation*}
         \begin{aligned}
             u_n &\to u \text{ pointwise in }\mathbb{Z}^N,\\
             \nabla u_n &\to \nabla u \text{ pointwise in } E,\\
            a_n(x)^\frac{1}{q}\nabla u &\to \tilde{a}(x)^\frac{1}{q}\nabla u \text{ pointwise in }E,\\
        \end{aligned}
    \end{equation*}
$\{ \nabla u_n \}$ is bounded in $\ell^p(E)$, and $\{ a_n(x)^\frac{1}{q}\nabla u_n \}$ is bounded in $\ell^q(E)$. In addition, the following inequality hold:
    \begin{equation}
    \label{eqtrans}
        \lim_{n \to \infty}\int_E{\abs{a_n(x)^\frac{1}{q}\nabla u_n - \tilde{a}(x)^\frac{1}{q}\nabla u}^q\, dw}
         \geqslant
         \varlimsup_{n \to \infty}\int_E{a_n(x)^\frac{1}{q}\abs{\nabla u_n - \nabla u}^q\, dw}.
    \end{equation}
\end{lemma}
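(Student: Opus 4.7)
The plan is to combine the translation invariance of the counting measure on $\mathbb{Z}^N$, a standard diagonal extraction to pass to pointwise limits, and finally a triangle-inequality plus dominated-convergence argument on $(E,w)$ to establish \eqref{eqtrans}.

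\textbf{Boundedness and pointwise limits.} Because the change of variables $x \mapsto x + y_n$ is a bijection of $\mathbb{Z}^N$ and the counting measure is translation invariant, we have
\[
\int_E |\nabla u_n|^p \, dw = \int_E |\nabla v_n|^p \, dw, \qquad \int_E a_n(x)\,|\nabla u_n|^q \, dw = \int_E a(x)\,|\nabla v_n|^q \, dw
\]
for every $n$. Since $I(v_n) \to S$, both right-hand sides are uniformly bounded, so $\{\nabla u_n\}$ is bounded in $\ell^p(E)$ and $\{a_n^{1/q} \nabla u_n\}$ is bounded in $\ell^q(E)$. Proposition \ref{proembed}(v), together with Proposition \ref{promodular}, then yields a uniform bound on $\norm{u_n}_\infty$. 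A Bolzano--Weierstrass plus diagonal extraction produces a subsequence for which $u_n(x)\to u(x)$ for every $x\in\mathbb{Z}^N$, and hence $\nabla u_n \to \nabla u$ pointwise on $E$. Since $0\leq a_n \leq \norm{a}_\infty$, a further diagonal extraction gives $a_n \to \tilde a$ pointwise for some $0\leq \tilde a\leq \norm{a}_\infty$, and combining these yields $a_n^{1/q}\nabla u_n \to \tilde a^{1/q}\nabla u$ pointwise on $E$. A last extraction ensures that $\lim_n \int_E a_n|\nabla u_n|^q\, dw$ exists; via a Brezis--Lieb-type splitting this will force the limit on the left-hand side of \eqref{eqtrans} to exist as well.

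\textbf{The inequality \eqref{eqtrans}.} I would start from the pointwise identity
\[
a_n^{1/q}\bigl(\nabla u_n - \nabla u\bigr) = \bigl(a_n^{1/q}\nabla u_n - \tilde a^{1/q}\nabla u\bigr) + \bigl(\tilde a^{1/q} - a_n^{1/q}\bigr)\nabla u.
\]
Taking absolute values and applying the $\ell^q(E)$ triangle inequality gives
\[
\Bigl(\int_E a_n\,|\nabla u_n - \nabla u|^q\, dw\Bigr)^{1/q} \leq \Bigl(\int_E |a_n^{1/q}\nabla u_n - \tilde a^{1/q}\nabla u|^q\, dw\Bigr)^{1/q} + \Bigl(\int_E |\tilde a^{1/q} - a_n^{1/q}|^q\,|\nabla u|^q\, dw\Bigr)^{1/q}.
\]
Taking $\varlimsup_n$ on the left-hand side and $\lim_n$ on the right-hand side (the first summand converging by the previous extraction), the inequality \eqref{eqtrans} will follow once the last term on the right is shown to tend to zero.

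\textbf{The main obstacle.} The hard step is therefore to show $\int_E |\tilde a^{1/q} - a_n^{1/q}|^q\,|\nabla u|^q\, dw \to 0$. The integrand converges to $0$ pointwise and is dominated by $2^{q}\norm{a}_\infty\,|\nabla u|^q$, so the dominated convergence theorem on $(E,w)$ gives the desired vanishing \emph{provided} $|\nabla u|^q \in \ell^1(E)$, i.e.\ $\nabla u \in \ell^q(E)$. This is precisely where a feature special to the discrete setting is used: Fatou's lemma applied to the $\ell^p(E)$-bounded sequence $\{\nabla u_n\}$ gives $\nabla u \in \ell^p(E)$, and then Proposition \ref{prointer}, which on lattice graphs provides the embedding $\ell^p(E)\hookrightarrow \ell^q(E)$ for $p\leq q$, upgrades this to $\nabla u \in \ell^q(E)$. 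In the continuous setting the analogous embedding fails and a different argument would be required; here, once $\nabla u \in \ell^q(E)$ is available, dominated convergence closes the proof of \eqref{eqtrans}.
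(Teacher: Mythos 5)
Your proposal is correct and follows essentially the same route as the paper: translation invariance of the lattice to transfer the bounds from $v_n$ to $u_n$, Bolzano--Weierstrass with diagonal extraction for the pointwise limits, Fatou plus the discrete embedding $\ell^p(E)\hookrightarrow\ell^q(E)$ to get $\nabla u\in\ell^q(E)$, dominated convergence to kill the term $\int_E\abs{a_n^{1/q}-\tilde a^{1/q}}^q\abs{\nabla u}^q\,dw$, and the Minkowski/triangle inequality in $\ell^q(E)$ (the paper's identity \eqref{eqdct} and reverse-triangle step) to conclude \eqref{eqtrans}. The only differences are cosmetic: the paper bounds $\norm{u_n}_\infty$ directly via $\norm{u_n}_\infty\leqslant\norm{u_n}_r=\norm{v_n}_r$ rather than through the Sobolev embedding, and secures existence of the limit on the left of \eqref{eqtrans} simply by extracting a subsequence of a bounded sequence rather than via a Br\'ezis--Lieb splitting.
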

\begin{proof}
It follows from $\norm{u_n}_\infty \leqslant \norm{u_n}_r$=$\norm{v_n}_r$ that $\{ \norm{u_n}_\infty \}$ is bounded.
Then passing to a subsequence if necessary, we obtain $u_n \to u$ pointwise in $\mathbb{Z}^N$ by Bolzano-Weierstrass theorem and diagonal principle, and thus $\nabla u_n \to \nabla u$ pointwise in $E$.
Moreover, we obtain $a_n \to \tilde{a}$ pointwise in $\mathbb{Z}^N$ in the same way as above since $a \in \ell^\infty(\mathbb{Z}^N)$. Hence $a_n(x)^\frac{1}{q}\nabla u \to \tilde{a}(x)^\frac{1}{q}\nabla u$ pointwise in $E$.

For the arbitrary sequence $\{y_n\} \subset \mathbb{Z}^N$, the equality
\begin{equation}
    \label{eqIuv}
          \int_E{\left(\frac{1}{p}\abs{\nabla u_n(x)}^p + \frac{a(x+y_n)}{q}\abs{\nabla u_n(x)}^q\right)}\, dw
        =
            \int_E{\left(\frac{1}{p}\abs{\nabla v_n(x)}^p + \frac{a(x)}{q}\abs{\nabla v_n(x)}^q\right)}\, dw
        =I(v_n)
\end{equation}
implies that $\{ \nabla u_n \}$ is bounded in $\ell^p(E)$, and $\{ a_n^\frac{1}{q}\nabla u_n \}$ is bounded in $\ell^q(E)$, since $\{I(v_n)\}$ is bounded. Moreover,  by Fatou's lemma and Proposition \ref{proembed} (ii), we know that $u \in  D^{1,q}(\mathbb{Z}^N)$. By the definition of $a_n$, we get
\begin{equation*}
     \abs{a_n(x)^\frac{1}{q}\nabla u - \tilde{a}(x)^\frac{1}{q}\nabla u} \leqslant 2\norm{a}_\infty^\frac{1}{q}\abs{\nabla u}.
\end{equation*}
Hence, by Lebesgue dominated convergence theorem, we have
\begin{equation}
\label{eqdct}
     \lim_{n \to \infty} \int_E{\abs{a_n(x)^\frac{1}{q}\nabla u - \tilde{a}(x)^\frac{1}{q}\nabla u}^q\, dw}
    =0.
\end{equation}
Finally, by Minkowski inequality and \eqref{eqdct}, passing to a subsequence if necessary, we obtain
 \begin{equation*}
     \begin{aligned}
         \lim_{n \to \infty}\left(\int_E{\abs{a_n(x)^\frac{1}{q}\nabla u_n - \tilde{a}(x)^\frac{1}{q}\nabla u}^q\, dw}\right)^\frac{1}{q}
         \geqslant& \varlimsup_{n \to \infty}\left(\int_E{\abs{a_n(x)^\frac{1}{q}\nabla u_n - a_n(x)^\frac{1}{q}\nabla u}^q\, dw}\right)^\frac{1}{q}\\
         &- \lim_{n \to \infty}\left(\int_E{\abs{a_n(x)^\frac{1}{q}\nabla u - \tilde{a}(x)^\frac{1}{q}\nabla u}^q\, dw}\right)^\frac{1}{q}\\
         \geqslant& \varlimsup_{n \to \infty}\left(\int_E{a_n(x)\abs{\nabla u_n - \nabla u}^q\, dw}\right)^\frac{1}{q}.
     \end{aligned}
\end{equation*}
We conclude the proof of the lemma.
\end{proof}
\begin{proof}[Proof of Theorem \ref{theoremmain}]
  Let $\left\{v_n\right\} \subset D^{1,\mathcal{H}}(\mathbb{Z}^N)$ be a minimizing sequence of \eqref{eqconstrained} and ${x_n} \subset \mathbb{Z}^N$ be such that $\abs{v_n(x_n)} = \norm{v_n}_\infty$.

We first excludes the vanishing case. Let $u_n(x) := v_n(x+y_n)$ with ${y_n}$ to ensure that $\left\{x_n-y_n\right\} \subset \Omega$, where $\Omega$ is an arbitrary bounded domain in $\mathbb{Z}^N$.
By Lemma \ref{lemlb} and \ref{lemkey}, we have
\[
    \varliminf_{n \to \infty}{\abs{u_n(x_n-y_n)}}=
 \varliminf_{n \to \infty}{\norm{v_n}_\infty} > 0,
\]
whence it follows that $u_n \to u \neq 0$ pointwise in $\mathbb{Z}^N$ since $\Omega$ is bounded.

 We claim that $\norm{u}_r=1$, i.e., $u \in M_{\frac{1}{r}}$. Suppose that this is not true, i.e.,  $0 < \norm{u}_r < 1$.
Hence, by Br\'ezis-Lieb lemma, we obtain $0 < \lim\limits_{n \to \infty}\norm{u_n-u}_r < 1$ and $\norm{u}_r^r +  \lim\limits_{n \to \infty}\norm{u_n-u}_r^r = 1$.
Since $r \geqslant q > p$, we get $\norm{u}_r^p > \norm{u}_r^q$, $\lim\limits_{n \to \infty}\norm{u_n-u}_r^p > \lim\limits_{n \to \infty}\norm{u_n-u}_r^q$ and $\lim\limits_{n \to \infty}\norm{u_n-u}_r^q + \norm{u}_r^q \geqslant 1$.
Moreover, by \eqref{eqtrans}, \eqref{eqIuv}, \eqref{eqdct} and Br\'ezis-Lieb lemma,  passing to a subsequence if necessary, we have
\begin{equation}
  \label{eqbi}
\begin{aligned}
  S
  = & \lim_{n \to \infty}{\int_E{\left(\frac{1}{p}\abs{\nabla u_n}^p + \frac{a_n(x)}{q}\abs{\nabla u_n}^q\right)}\, dw}\\
  = & \lim_{n \to \infty}{\int_E{\left(\frac{1}{p}\abs{\nabla (u_n-u)}^p + \frac{1}{p}\abs{\nabla u}^p + \frac{1}{q}\abs{a_n(x)^\frac{1}{q}\nabla u_n - \tilde{a}(x)^\frac{1}{q}\nabla u)}^q + \frac{\tilde{a}(x)}{q}\abs{\nabla u}^q\right)}\, dw}\\
  \geqslant & \lim_{n \to \infty}\left({\frac{\int_E{\abs{\nabla (u_n-u)}^p}\, dw}{p \norm{u_n-u}_r^p}} \norm{u_n-u}_r^p
  + \frac{\int_E{a_n(x)\abs{\nabla (u_n-u)}^q}\, dw}{q \norm{u_n-u}_r^q} \norm{u_n-u}_r^q \right)\\
  & + {\frac{\int_E{\abs{\nabla u}^p}\, dw}{p \norm{u}_r^p}} \norm{u}_r^p
  + \lim_{n \to \infty}\frac{\int_E{a_n(x)\abs{\nabla u}^q}\, dw}{q \norm{u}_r^q} \norm{u}_r^q\\
  > & S \left( \lim_{n \to \infty}\norm{u_n-u}_r^q + \norm{u}_r^q \right)\\
  \geqslant & S
\end{aligned}
\end{equation}
which is an contradiction and we prove that $\norm{u}_r=1$. Moreover, \eqref{eqbi} yields
\begin{equation*}
  S
  \geqslant \frac{1}{p}\lim_{n \to \infty}{\int_E{ \abs{\nabla (u_n-u)}^p\, dw}} + \lim_{n \to \infty}{\int_E{\left(\frac{1}{p}\abs{\nabla u}^p+ \frac{a_n(x)}{q}\abs{\nabla u}^q\right)}\, dw}
  \geqslant \frac{1}{p}\lim_{n \to \infty}{\int_E{ \abs{\nabla (u_n-u)}^p\, dw}} + S.
\end{equation*}
Hence, we conclude that $u_n \to u$ in $D^{1,\mathcal{H}}(\mathbb{Z}^N)$ by Remark \ref{remarkequi}.  Therefore, by using a variant of Lebesgue dominated convergence theorem, Proposition \ref{proembed} (ii)
implies
 \begin{equation}
 \label{eqanatilde}
     \lim_{n \to \infty}\int_E{a_n(x)\abs{\nabla u_n}^{q}}\, dw
  =  \int_E{\tilde{a}(x)\abs{\nabla u}^{q}}\, dw.
 \end{equation}
Hence $u$ satisfies \eqref{eqT1}.

    Furthermore, by  Ekeland's variational principle (see \cite{ekeland}), we obtain $I(v_n) \to S$ and
\begin{equation}
    \label{eqdiff}
    \lim_{n \to \infty}\left( I'(v_n) - \lambda_n J'(v_n) \right) = 0,
\end{equation}
where $\{\lambda_n\} \subset \mathbb{R}$ is a Lagrange multiplier sequence. Thus, by \eqref{eqanatilde} and the boundedness of  $\left\{ v_n \right\}$ in $D^{1,\mathcal{H}}(\mathbb{Z}^N)$,  we obtain
\begin{equation*}
    \begin{aligned}
    \lim_{n \to \infty}\lambda_n
    &= \lim_{n \to \infty}\lambda_n \int_{\mathbb{Z}^N}\abs{v_n}^{r}\, d\mu\\
    &= \lim_{n \to \infty}\left( \int_E{\abs{\nabla u_n}^{p}}\, dw + \int_E{a_n(x)\abs{\nabla u_n}^{q}}\, dw \right)\\
    &= \int_E{\abs{\nabla u}^{p}}\, dw + \int_E{\tilde{a}(x)\abs{\nabla u}^{q}}\, dw.
    \end{aligned}
\end{equation*}
Let $\lambda := \lim\limits_{n \to \infty}\lambda_n$. It follows from $r > p^*$, $\norm{u}_r=1$ and the Sobolev inequality \eqref{eqsobolev} that $\lambda > 0$. For any $\varphi \in C_0(\mathbb{Z}^N)$, we have $\left\{ \varphi_n(x) := \varphi(x - y_n)\right\}$ is bounded in $D^{1,\mathcal{H}}(\mathbb{Z}^N)$ by Remark \ref{remarkequi}. Let
\[
       \langle \tilde{I}'(u),\varphi \rangle := \int_E{\abs{\nabla u}^{p-2}{\nabla u}{\nabla \varphi}}\, dw
  + \int_E{\tilde{a}(x)\abs{\nabla u}^{q-2}{\nabla u}{\nabla \varphi}}\, dw.
\]
Then it follows from \eqref{eqdiff} that
\begin{equation*}
    \begin{aligned}
    \langle \tilde{I}'(u),\varphi \rangle
  &=     \lim_{n \to \infty}\left( \int_E{\abs{\nabla u_n}^{p-2}{\nabla u_n}{\nabla \varphi}}\, dw
  + \int_E{a_n(x)\abs{\nabla u_n}^{q-2}{\nabla u_n}{\nabla \varphi}}\, dw \right)\\
  &=  \lim_{n \to \infty}I'(v_n)\varphi_n\\
  &= \lim_{n \to \infty}\lambda_n J'(v_n)\varphi_n\\
  &= \lambda \lim_{n \to \infty} \int_{\mathbb{Z}^N}\abs{u_n}^{r-2}u_n\varphi\, d\mu\\
  &= \lambda \int_{\mathbb{Z}^N}\abs{u}^{r-2}u\varphi\, d\mu,
    \end{aligned}
\end{equation*}
 that is, $u$ is a nontrivial weak solution of \eqref{eqtildea}. It is clear that $u$ is a pointwise solution of \eqref{eqtildea}. Similar to the arguments in Theorem \ref{theoremcompact}, one can show that if $v_n$ are non-negative, then $u$ is positive.
\end{proof}

Now we provide the proof of Corollary \ref{corocases}.
\begin{proof}[Proof of Corollary \ref{corocases}]
    The proof is split in two steps.
   \begin{enumerate}[label=(\roman*)]
       \item $a$ is $T$-periodic.

        Let $\{v_n\} \subset D^{1,\mathcal{H}}(\mathbb{Z}^N)$ be a minimizing sequence of \eqref{eqconstrained} and ${x_n} \subset \mathbb{Z}^N$ be such that $\abs{v_n(x_n)} = \norm{v_n}_\infty$. Since $\{\abs{v_n}\}$ is also a minimizing sequence, we may assume that $v_n$ are non-negative.

        Let $\{k_n:=(k_n^1,...,k_n^N)\} \subset \mathbb{Z}^N$ to ensure that $\{x_n - Tk_n\} \subset \Omega$ where $T \in \mathbb{Z}$, $\Omega = [0,T)^N\cap\mathbb{Z}^N$. Let $\left\{ u_n(x) := v_n(x+Tk_n)\right\}$. Then, by Theorem \ref{theoremmain}, we have $u_n \to u$ in $D^{1,\mathcal{H}}(\mathbb{Z}^N)$, where $u$ satisfies \eqref{eqT1} and solves \eqref{eqtildea}. Thus $u$ is a positive minimizer for \eqref{eqconstrained} and pointwise solution of \eqref{eq2}, since $\tilde{a}(x) = \lim\limits_{n \to \infty}a(x+Tk_n) = a(x)$.

        \item $a$ is a bounded potential.

        We consider the case that $a \not\equiv a_\infty := \lim\limits_{\abs{x} \to \infty} a(x)$, since the case that $a$ is a constant is contained in the proof above. Then there exists $x_0 \in \mathbb{Z}^N$ such that $a(x_0) < a_\infty$.
        Let
        \begin{equation}
        \label{eqinfty}
            S_\infty := \inf_{u \in M_{\frac{1}{r}}} \int_E{\left(\frac{1}{p}\abs{\nabla u}^p + \frac{a_\infty}{q}\abs{\nabla u}^q\right)}\, dw.
        \end{equation}
        We claim that $S < S_\infty$. In fact, it follows from the first part of this proof that problem \eqref{eqinfty} has a positive solution $u_\infty$. Then there exists $x_1y_1 \in E$ such that $(\nabla u_\infty)(x_1,y_1) \neq 0$. Hence, for $v(x) := u_\infty(x+x_1-x_0)$, we have
        \[
            S_\infty = \int_E{\left(\frac{1}{p}\abs{\nabla v}^p + \frac{a_\infty}{q}\abs{\nabla v}^q\right)}\, dw >  \int_E{\left(\frac{1}{p}\abs{\nabla v}^p + \frac{a(x)}{q}\abs{\nabla v}^q\right)}\, dw \geq S.
        \]
Now, let $\{v_n\} \subset D^{1,\mathcal{H}}(\mathbb{Z}^N)$ be a minimizing sequence of \eqref{eqconstrained} and ${x_n} \subset \mathbb{Z}^N$ be such that $\abs{v_n(x_n)} = \norm{v_n}_\infty$. Since $\{\abs{v_n}\}$ is also a minimizing sequence, we may assume that $v_n$ are non-negative. Let $\left\{ u_n(x) := v_n(x+x_n)\right\}$. Then, by Theorem \ref{theoremmain}, we have $u_n \to \tilde{u}$ in $D^{1,\mathcal{H}}(\mathbb{Z}^N)$ and
        \begin{equation}
            \label{eqbounded}
             S =  \int_E{\left(\frac{1}{p}\abs{\nabla \tilde{u}}^p + \frac{\tilde{a}(x)}{q}\abs{\nabla \tilde{u}}^q\right)}\, dw.
        \end{equation}

        Suppose $\abs{x_n} \to \infty$ as $n \to \infty$. Then $\tilde{a}(x) = \lim\limits_{n \to \infty}a_n(x+x_n) = a_\infty$. By \eqref{eqbounded}, we get
        \[
            S =  \int_E{\left(\frac{1}{p}\abs{\nabla \tilde{u}}^p + \frac{a_\infty}{q}\abs{\nabla \tilde{u}}^q\right)}\, dw \geqslant \int_E{\left(\frac{1}{p}\abs{\nabla \tilde{u}}^p + \frac{a(x)}{q}\abs{\nabla \tilde{u}}^q\right)}\, dw
            \geqslant S_\infty,
        \]
         which is a contradiction to $S < S_\infty$. Thus $\{x_n\}$ is bounded.

         Passing to a subsequence if necessary, let $x^* := \lim\limits_{n \to \infty} x_n$,  $\tilde{a}(x) = \lim\limits_{n \to \infty}a_n(x+x_n) = a(x+x^*)$. Let $u := \tilde{u}(x-x^*)$, we have
         \[
            \int_E{\left(\frac{1}{p}\abs{\nabla u}^p + \frac{a(x)}{q}\abs{\nabla u}^q\right)}\, dw
            = \int_E{\left(\frac{1}{p}\abs{\nabla \tilde{u}}^p + \frac{a(x+x^*)}{q}\abs{\nabla \tilde{u}}^q\right)}\, dw
            = S.
         \]
         Thus $u$ is a non-negative minimizer for \eqref{eqconstrained}.
         Similar to the arguments in Theorem \ref{theoremcompact}, one can show that $u$ is a positive pointwise solution of \eqref{eq2}.
   \end{enumerate}
\end{proof}

\section{Proof of Theorem \ref{theoremeg}}\label{sectioneg}
In the final section, by adapting the arguments of \cite[Theorem 1.2]{Fan}, we shall study the asymptotic behaviour of Lagrangian multiplier $\lambda$ with respect to $t$. It follows from \eqref{eq2} that
\[
    \lambda\left( u_t \right) = \frac{ \int_E{\abs{\nabla u_t}^{p}\, dw}
  + \int_E{a(x)\abs{\nabla u_t}^{q}\, dw}}{\int_{\mathbb{Z}^N}\abs{u_t}^{r}\, d\mu}.
\]
Since $I(u_t) = S_t$ and $\int_{\mathbb{Z}^N}\abs{u_t}^{r} = rt$, we have
\[
      \frac{pS_{t}}{rt} \leqslant \lambda\left( u_t \right) \leqslant \frac{qS_{t}}{rt}.
\]
Hence, for the proof of Theorem \ref{theoremeg}, it suffices to show
\[
    \frac{S_t}{t} \to 0 \text{ as } t \to \infty, \text{ and } \frac{S_t}{t} \to \infty \text{ as } t \to 0.
\]
\begin{proof}[Proof of theorem \ref{theoremeg}]
The proof is split in two steps.
\begin{enumerate}[label=(\roman*)]
\item Let $r > q$.

First, fix $y \in \mathbb{Z}^N$. Thus, for $t > 1$, we have
\[
\frac{S_t}{t} \leqslant \frac{I\left((rt)^\frac{1}{r}\delta_{y}\right)}{t} \leqslant (rt)^{\frac{q}{r}-1} I\left(\delta_y\right).
\]
Hence $\frac{S_t}{t} \to 0$ as $t \to \infty$ because $r > q$.

Now, let $t \in(0,1)$. Take $u_t \in A_t$. Then, by \red{the} Sobolev inequality \eqref{eqsobolev}, we obtain
\[
    \int_E{\abs{\nabla u_t}^{p}\, dw} \geqslant C_r^pt^\frac{p}{r}.
\]
Thus
\[
    \frac{S_t}{t} = \frac{I(u_t)}{t} \geqslant \frac{{\frac{1}{p}}  \int_E{\abs{\nabla u_t}^{p}\, dw}}{t} \geqslant \frac{1}{p}C_r^pt^{\frac{p}{r}-1},
\]
hence $\frac{S_t}{t} \to \infty$ as $t \to 0$ because $r > p^* > p$.
\item  Let $r = q$ and $a$ satisfies the hypotheses of Corollary \ref{corocases}.

Since $r > p$ still holds, we only need to show
\[
    \frac{S_t}{t} \to \infty \text{ as } t \to 0.
\]
Let
\[
    w_n(x):=
     \begin{cases}
       n^{-\frac{N}{r}},  & x \in \left[-\frac{n}{2}, \frac{n}{2}\right)^N \cap \mathbb{Z}^N,\\
       0, & else.
      \end{cases}
\]
Then $\int_{\mathbb{Z}^N}\abs{w_n}^{r} = 1$ and
\[
    I(w_n) \leqslant \frac{2}{p}Nn^{N-1}n^{-\frac{Np}{r}} + \frac{2}{q}\norm{a}_\infty Nn^{N-1}n^{-\frac{Nq}{r}} \leqslant C(n^{N(1-\frac{p}{r})-1} + n^{-1}),
\]
Thus, for $t > 1$ and $t^{\frac{1}{N}}-1 \leqslant n < t^{\frac{1}{N}}$,
\[
\frac{S_t}{t} \leqslant \frac{I\left((rt)^\frac{1}{r}w_n\right)}{t} \leqslant  \frac{C_1(t^{\frac{p}{r}}n^{N(1-\frac{p}{r})-1} + t^{\frac{q}{r}}n^{-1})}{t} < C_2(t^{- \frac{1}{N}} + (t^{\frac{1}{N}}-1)^{-1}).
\]
Hence $\frac{S_t}{t} \to 0$ as $t \to \infty$.
\end{enumerate}
\end{proof}
\begin{remark} \label{remarkegex}
Under the hypotheses of Theorem \ref{theoremcompact} with $r \leq q$, our second conclusion, i.e., $\lambda(u_t) \to \infty \text{ as } t \to 0^{+}$ holds since we only use the fact that $r > p$ in the proof. The same arguments above also works for $\lambda$ defined in Theorem \ref{theoremmain}.
\end{remark}
\begin{remark}
Theorem \ref{theoremeg} and Remark \ref{remarkegex} can be extended to more general graphs. Let  $G = (V, E)$ be a uniformly locally finite graph satisfying $\inf\limits_{x \in V}\mu(x) > 0$, $\inf\limits_{xy \in E}w_{xy} > 0$, and $d$-isoperimetric inequality for some $d \geqslant 2$. If $a$ satisfies $\lim_{d(x,x_0) \to \infty}a(x) = + \infty$ for some $x_0 \in V$, then $\lambda(u_t) \to 0 \text{ as } t \to +\infty$ holds for all $r>q$,  and $\lambda(u_t) \to \infty \text{ as } t \to 0^{+}$ holds for all $r>p$. If we further assume that $G$ is a quasi-transitive graph with $\mu \equiv 1$ and $w \equiv 1$, then Theorem \ref{theoremeg} and Remark \ref{remarkegex} hold for all $r > q$.
\end{remark}

\subsection*{Acknowledgements}
C. Ji was partially supported by National Natural Science Foundation of China (No. 12171152).



  
\end{document}